\theoremstyle{plain}
\newtheorem{theorem}{Theorem}[section]      
\newtheorem{lemma}[theorem]{Lemma}          
\newtheorem{proposition}[theorem]{Proposition}
\newtheorem{corollary}[theorem]{Corollary}
\newtheorem{claim}[theorem]{Claim}          
\newtheorem*{theorem*}{Theorem}             
\theoremstyle{definition}
\newtheorem{definition}[theorem]{Definition}
\newtheorem{remark}[theorem]{Remark}
\numberwithin{equation}{section}
\newcommand{\R}{\mathbb{R}}
\newcommand{\N}{\mathbb{N}}
\begin{document}

\title[Harmonic map-like structure]{Existence and a priori bounds for fully nonlinear PDEs with a harmonic map-like structure}

\author{Gabrielle Nornberg}
\address{Department of Mathematics and Center for Mathematical Modeling (CNRS IRL2807), University of Chile, Chile}
\email{gnornberg@dim.uchile.cl}

\author{Ricardo Ziegele}
\address{Department of Mathematics, Università degli studi di Torino, Italy}
\email{ricardoalfonso.ziegelealiaga@unito.it}

\subjclass[2020]{35J60, 35B45, 35J62} 
\keywords{Quadratic growth in the gradient, Existence of solutions, Multiplicity, A priori estimates}

\begin{abstract}
In this paper, we study a new class of fully nonlinear uniformly elliptic equations with a so-called \emph{harmonic map-like} structure, whose model case is given by
\begin{equation*}
\mathcal{M}^{\pm}_{\lambda,\Lambda}(D^2u) \pm b(x) |Du| \pm \beta(u)\langle M(x) Du,Du \rangle \pm c(x) u = f(x)\; \textrm{ in } \Omega,
\end{equation*}
where $\Omega\subset \R^n$ is a bounded $C^{1,1}$ domain, $\mathcal{M}^{\pm}$ are the Pucci extremal operators, $\beta(s) = s^k$ for some $k \in \N $ odd, $b \in L^{q}_{+}(\Omega)$, $c,f \in L^p(\Omega)$, and $n \leq p \leq q$, $q>n$. 

We obtain existence results under a smallness regime on the coefficients, along with some classical results such as the \emph{Aleksandrov--Bakelman--Pucci} estimate and the \emph{comparison principle}, as well as
 \emph{a priori} bounds for the respective Dirichlet problem in the noncoercive case.
We also establish multiplicity results and qualitative behavior, which seem to be new in the case of the Laplacian operator.
\end{abstract}

\maketitle

\section{Introduction}
\label{sec:intro}

Harmonic maps are nonlinear extensions of harmonic functions, arising as critical points of energy functionals restricted to manifolds, which naturally lead to systems of equations involving quadratic gradient terms.
 Such maps arise in physical models such as liquid crystals \cite{komatsu_new_1993, choi_degenerate_1987, chou_constancy_1995} and superfluids \cite{berlyand_homogenization_1999}, and may be applied, for instance, to describe the orientation of cardiac fibers \cite{BARNAFI2025117710}.
On the other hand, their natural motivation comes from geometric analysis (see \cite{helein_harmonic_2008} for a survey on the matter), and from the mathematical point of view, many problems concerning solvability, regularity, and multiplicity of solutions remain open to this day -- see the list of problems proposed by Brezis in \cite{brezis_my_2023}.

In this paper, we study a new class of fully nonlinear uniformly elliptic equations, which we call \emph{harmonic map-like} structure, whose model case is given in the scalar regime by
\begin{equation}
\label{Modelo}
\tag{$M_\beta^{\pm}$}
\mathcal{M}^{\pm}_{\lambda,\Lambda}(D^2u) \pm b(x) |Du| \pm \beta(u)\langle M(x) Du,Du \rangle \pm c(x) u = f(x),
\end{equation}
where $\Omega\subset \R^n$ is a bounded $C^{1,1}$ domain, $\mathcal{M}^{\pm}$ are the Pucci extremal operators, $b \in L^{q}_{+}(\Omega)$, $c,f \in L^p(\Omega)$, with $n \leq p \leq q$. Here, $\beta$ will be a function satisfying 
\begin{equation}
        \label{Hbeta}
        \tag{H$_\beta$}
    \begin{aligned}
    &\beta : \mathbb{R} \to \mathbb{R} \text{ is an odd, locally Lipschitz continuous and nondecreasing} \\
    &\text{function such that } \beta(s)s \geq 0 \text{ for all } s\in \mathbb{R} \text{ and } \lim_{s \to +\infty} \beta(s) = + \infty ,
    \end{aligned}
\end{equation}
where the model $\beta$ satisfying \eqref{Hbeta} to bear in mind is $\beta(s) = s^k$ with $k \in \mathbb{N}$ odd.

Quasilinear equations with a $\beta(u)|Du|^2$ nonlinearity have been studied in several settings. Firstly, the case $\beta \equiv 1$ has been vastly studied since the early 80s (see e.g., 
\cite{arcoya_continuum_2015, boccardo_existence_1982, boccardo_resultats_1984, de_coster_multiplicity_2017, ferone_nonlinear_2000, jeanjean_multiple_2016,
 jeanjean_existence_2013,
 nornberg_priori_2019, sirakov_solvability_2010}, and the references therein), and has become increasingly relevant in recent years, as equations with quadratic growth in the gradient arise naturally in various areas, such as large deviations theory~\cite{boue_variational_1998, possamai_second_2013, robertson_large_2015}, stochastic control~\cite{kobylanski_backward_2000}, and game theory~\cite{cardaliaguet_master_2019}. For results on the case $\beta \not \equiv \text{constant}$, which is the interest of this work, we refer to \cite{porretta_local_2004} for local estimates of \emph{large solutions}, \cite{abdellaoui_remarks_2006} for existence and multiplicity of solutions of
\begin{equation}
\label{abdella}
-\Delta u = \beta(u)|Du|^2 + \lambda f(x), 
\end{equation}
and to \cite{hamid_correlation_2008}, for an extension of these results to the $p$-Laplacian.

Let us state our main results in what follows.
Our first theorem explores the existence of strong $W^{2,p}(\Omega)$ solutions of \eqref{Modelo} in a more general framework where the gradient growth is superlinear, and the function $\beta$ is assumed only to be continuous with polynomial growth. 
\begin{theorem}
\label{teo_existenciasolfuerte}
    Let $\Omega \subset \R^n$ a bounded domain, $\partial \Omega \in C^{1,1}$, $n <p \leq q_1$, $m>1$, $f \in L^p (\Omega)$, $b \in L^{q_1}(\Omega)$, $c \in L^q(\Omega)$, $\psi \in W^{2,p}(\Omega)$ and $\beta \,\colon \R \to \R $ a continuous function such that $|\beta(s)| \leq C_\beta |s|^k$ for some $k \in \mathbb{N}$. Assume that one of the following conditions holds:
    \label{Teoremaclave}
\begin{equation}
\label{lol}
\left\{
\begin{aligned}
&\text{(i)} && q = \infty,  n < p, \\
&\text{(ii)} && n < p \leq q \leq \frac{m}{m-1}p < \infty, \\
&\text{(iii)} && n < p \leq q \text{ and }  q \geq \frac{m}{m-1}p,
\end{aligned}
\right. \\[1em]
\end{equation}
\noindent
and let
\begin{equation}
\label{losR}
\left\{
\begin{aligned}
&r = pm  &&\text{for (i) and (iii)}, \\
&r = \infty &&\text{for (ii)} ; \text{with } p = q, \\
& r = \frac{pq}{q - p} &&\text{for (ii)} ; \text{with } p < q. \\
\end{aligned}
\right.
\end{equation}
\noindent

Then there exists $\varepsilon_1 >0$ such that, if
\begin{equation}
    \label{pequeñezc}
    \|c\|_{L^q(\Omega)} < \frac{\varepsilon_1^{\frac{1}{m+k}}}{C_1 |\Omega|^{\frac{q-p}{pq}}},
\end{equation}
and
\begin{equation}
\label{concluirteo}
  \|\mu\|_\infty   C_\beta C_1^k  (\|f\|_{L^p(\Omega)} + \| \psi \|_{W^{2,p}(\Omega)})^{m+k-1} < \varepsilon_1,
\end{equation}
\noindent
where $C_1$ is the constant associated with the continuous $W^{1,r}(\Omega) \hookrightarrow C(\overline{\Omega})$ embedding, there exists a strong solution $u \in W^{2,p}(\Omega)$ of
\begin{equation}
\label{EQTHEOSWEICH}
    \begin{cases}
    \begin{aligned}
        \mathcal{M}^{\pm}_{\lambda, \Lambda}(D^2u) \pm b(x) |Du| \pm \mu(x) \beta(u)|Du|^m + c(x)u &= f &&\text{ in } \Omega,\\
        \quad \quad \quad \quad \quad \quad  u& = \psi &&\text{ on } \partial \Omega,
        \end{aligned}
    \end{cases}
\end{equation}
\noindent
which satisfies
\begin{equation}
\label{cotainteresante2}
    \|u\|_{W^{2,p}(\Omega)} \leq \hat{C} \left( \| f \|_{L^p(\Omega)} + \| \psi\|_{W^{2,p}(\Omega)}\right),
\end{equation}

\noindent
for some $\hat{C} = \hat{C}(n,\lambda, \Lambda, p,q, q_1, m, \beta, \| b\|_{q_1}, \|c\|_{L^q(\Omega)},\Omega) > 0$.
\end{theorem}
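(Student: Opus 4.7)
The plan is a Schauder-type fixed-point scheme built on the $W^{2,p}$ theory for Pucci equations with $L^p$ data. Given $v$ in a closed ball $\mathcal{B}_R\subset W^{2,p}(\Omega)$, with $R$ to be fixed proportional to $\|f\|_{L^p(\Omega)}+\|\psi\|_{W^{2,p}(\Omega)}$, I define $T(v)=u$ as the unique strong $W^{2,p}$-solution of the auxiliary Pucci problem
\begin{equation*}
\mathcal{M}^{\pm}_{\lambda,\Lambda}(D^2 u)\pm b(x)|Du| \;=\; f - c(x)\,v \mp \mu(x)\beta(v)|Dv|^m \;=:\; \tilde f_v \text{ in } \Omega, \qquad u=\psi \text{ on } \partial\Omega,
\end{equation*}
treating the $cv$-contribution as part of the source. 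Since $p>n$, the Sobolev chain $W^{2,p}\hookrightarrow W^{1,r}\hookrightarrow C(\overline\Omega)$ with $r$ as in \eqref{losR} places $\tilde f_v\in L^p(\Omega)$ in each of the regimes \eqref{lol}: the term $|Dv|^m$ is estimated via $\|Dv\|_{L^{mp}}^m$ when $r=mp$ (cases (i) and (iii)), via $\|Dv\|_{L^\infty}^m$ when $r=\infty$ (case (ii) with $p=q$), and by Hölder pairing with $c\in L^q$ in the remaining subcase of (ii); meanwhile $cv$ is controlled by $\|c\|_{L^q}\|v\|_{L^{r_*}}$ with conjugate exponent $r_*=pq/(q-p)$. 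Unique solvability of this auxiliary problem in $W^{2,p}(\Omega)$ is then a consequence of the classical strong-existence theory for fully nonlinear uniformly elliptic operators with $L^p$-data.

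The self-map property follows from the $W^{2,p}$ a priori estimate $\|u\|_{W^{2,p}}\leq C(\|\tilde f_v\|_{L^p}+\|\psi\|_{W^{2,p}})$ combined with $|\beta(v)|\leq C_\beta C_1^k\|v\|_{W^{1,r}}^k$ and $\|v\|_{W^{1,r}}\leq C_{\mathrm{emb}}\|v\|_{W^{2,p}}\leq C_{\mathrm{emb}} R$, giving
\begin{equation*}
\|u\|_{W^{2,p}(\Omega)} \;\leq\; C\Bigl(\|f\|_{L^p}+\|\psi\|_{W^{2,p}}+\|c\|_{L^q}\,C_1\,|\Omega|^{\tfrac{q-p}{pq}} R+\|\mu\|_\infty C_\beta C_1^k C_{\mathrm{emb}}^{k+m}\, R^{k+m}\Bigr).
\end{equation*}
Hypothesis \eqref{pequeñezc} makes the coefficient $\|c\|_{L^q}C_1|\Omega|^{(q-p)/(pq)}$ strictly smaller than $1/(2C)$, absorbing the linear-in-$R$ term on the right; choosing $R$ proportional to $\|f\|_{L^p}+\|\psi\|_{W^{2,p}}$ and invoking \eqref{concluirteo} then bounds the superlinear term $C\|\mu\|_\infty C_\beta C_1^k C_{\mathrm{emb}}^{k+m}R^{k+m}$ by $R/2$, yielding $\|u\|_{W^{2,p}}\leq R$ and in particular estimate \eqref{cotainteresante2} at any fixed point.

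To conclude via Schauder's fixed-point theorem, I would endow $\mathcal{B}_R$ with the $W^{1,r'}$-topology for some $n<r'<r$, so that it is compact by Rellich--Kondrachov, and establish continuity of $T$: if $v_n\to v$ in $W^{1,r'}$ with $\|v_n\|_{W^{2,p}}\leq R$, then $\tilde f_{v_n}\to\tilde f_v$ in $L^p(\Omega)$ by dominated convergence (via the uniform bounds on $\|v_n\|_\infty$ and $\|Dv_n\|_{L^{mp}}$), and $L^p$-stability of strong solutions of the Pucci operator with the drift $\pm b\,|Du|$ delivers $T(v_n)\to T(v)$ in $W^{2,p}(\Omega)$. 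The main obstacle I anticipate is the unified treatment of the three regimes (i)--(iii): the case-by-case choice of the exponent $r$ in \eqref{losR} is precisely what makes the single structural estimate for $\|u\|_{W^{2,p}}$ above hold uniformly, and verifying it in the intermediate subcase $p<q\leq \tfrac{m}{m-1}p$ of (ii), where $|Dv|^m$ and $c$ must be interleaved through Hölder, is the most delicate technical point.
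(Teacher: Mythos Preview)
Your proposal is correct and follows essentially the same Schauder fixed-point scheme as the paper, freezing the nonlinear and zero-order terms and invoking the $W^{2,p}$ existence/estimates for the Pucci--drift problem (Koike--\'Swi\k{e}ch). The only cosmetic difference is that the paper takes the ball $\mathcal{B}_R$ directly in $W^{1,r}(\Omega)$ and uses the compact embedding $W^{2,p}\hookrightarrow W^{1,r}$ to get precompactness of $T(\mathcal{B}_R)$, which avoids your auxiliary exponent $r'<r$; in particular the continuity of $T$ is checked in $W^{1,r}$ itself, where the convergence $\|Dv_k-Dv\|_{L^r}\to 0$ is exactly what is needed to pass to the limit in the $|Dv|^m$ term.
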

This result recovers a coercivity-type bound on $\|c\|_{L^q(\Omega)}$, in line with the results obtained by Sirakov in \cite{sirakov_solvability_2010}. It further extends the existence results for equations with superlinear gradient terms derived by Koike and Święch in \cite{koike_existence_2009}, even for the case $\beta \equiv 1$, by including in the equation the zero-order term $c(x)u$. In addition, it covers the noncoercive-like case of {\cite[Proposition 3.1]{abdellaoui_remarks_2006}}, whenever $c$ is small.
As an application, we show how this existence result implies an ABP-type estimate and a comparison principle for the new class of fully nonlinear operators, cf.\ Propositions \ref{ABP} and \ref{Comparison} ahead.

Next, we consider the following one-parameter problem:
\begin{equation} 
\label{Plambda}
\begin{cases}
\tag{$\text{P}_\lambda$} 
\begin{aligned}
    -F(x,u,Du,D^2u) &= \lambda c(x)u + \beta(u) \langle M(x)Du, Du\rangle + h(x) &&\text{ in } \Omega\\
    u &= 0 &&\text{ on } \partial\Omega,
\end{aligned}
\end{cases}
\end{equation}
where $\Omega$ is a bounded domain in $\mathbb{R}^n$, $\lambda \in \mathbb{R}$, $n \geq 1$, $c,h \in L^p(\Omega)$, and $\beta(s) = s^k$ with $k \in \N$ odd.

We assume that the matrix $M$ satisfies the non-degeneracy condition
\[
\mu_1 I \leq M(x) \leq \mu_2 I \quad \text{a.e. in } \Omega \tag{M}
\]
for some $\mu_1, \mu_2 >0$, and that $F$ is a uniformly elliptic operator with the following structure:
\begin{equation} 
\label{SC}
\tag{$\text{SC}$} 
\begin{aligned}
    \mathcal{M}^- (X - Y) - b(x) |p - q|  -d(x) \omega \left((r - s)^{+}\right) 
    \leq F(x, r, p, X) - F(x, s, q, Y)    \\  \leq  \mathcal{M}^+(X - Y) + b(x)|p - q| + d(x)\omega((r - s)^+) 
\end{aligned}
\end{equation}
\[
F(\cdot, 0, 0, 0) \equiv 0, \quad b, d, c, h \in L^p(\Omega),\, p > n,\, b, d \geq 0,\, \omega \text{ a modulus of continuity.}
\]

\noindent
Here, $\mathcal{M}^\pm$ are Pucci’s operators with ellipticity constants $0 < \lambda_P \leq \Lambda_P$. We also consider
\[
\mathcal{L}^\pm[u] := \mathcal{M}^\pm(D^2 u) \pm b(x)|Du|,\, b \in L^p_+(\Omega).
\]
For ease of notation, we included in \eqref{SC} the unboundedness condition over the coefficients. On the other hand, we make the convention that $c$ is a bounded function when using the following stronger assumption
\begin{align*}
\label{SC0}
\mathcal{M}^-(X - Y) - b(x)|p - q| &\leq F(x, r, p, X) - F(x, s, q, Y) \\
&\leq \mathcal{M}^+(X - Y) + b(x)|p - q| \quad \text{a.e. } x \in \Omega \tag{SC\textsubscript{0}}
\end{align*}
\[
F(\cdot, 0, 0, 0) \equiv 0, \quad b, h \in L^p(\Omega), \,c \in L^{\infty}(\Omega), \, c,b \geq 0.
\]

Let us consider the problem $(P_0)$ which is \eqref{Plambda} with $\lambda=0$, where $F$ satisfies \eqref{SC}, and we assume that
\begin{equation*}
\label{H0}
(P_0) \text{ has a strong solution } u_0. \tag{H$_0$}
\end{equation*}

Further, setting $F[u] := F(x, u, Du, D^2 u)$, we assume that for each $f \in L^p(\Omega)$,

\[
\text{there exists a unique } L^p\text{-viscosity solution of }
\left\{
\begin{aligned}
    -F[u] &= f(x) \quad \text{in } \Omega \\
    u &= 0 \quad \text{on } \partial \Omega.
\end{aligned}
\right. \tag{H$_1$} \label{H1}
\]

Our main results on the a priori bounds for \eqref{Plambda} read as follows.
The first one provides an a priori lower bound for any solution of \eqref{Plambda}, showing that the negative part is uniformly controlled. The second result establishes uniform upper bounds for positive solutions of problem \eqref{Plambda}.
\begin{theorem}
\label{Propimportante}
    Let $\Omega \in C^{1,1}$ be a bounded domain. Suppose that $\beta(s) = s^k$ with $k \in \N$ odd,   \eqref{SC0} and \eqref{Hbeta} hold,  and let $\Lambda_2 >0$. Then, every $L^p$-viscosity super solution of $($\ref{Plambda}$)$ satisfies $$\|u^-\|_{L^\infty (\Omega)} \leq C \quad \forall \; \lambda \in [0,\Lambda_2],$$
    \noindent
    where $C = C(n,p,\mu_1, \Lambda_2, \|b\|_{L^p (\Omega)}, \|c\|_{L^\infty (\Omega)}, \|h^-\|_{L^p (\Omega)}, \lambda_P, \Lambda_P, \Omega)$.
\end{theorem}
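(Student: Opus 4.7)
My plan is to reduce the supersolution property of $u$ on $\{u<0\}$ to a subsolution-type inequality for an auxiliary function $w = \Phi(v)$, where $v = u^-$ and $\Phi$ is a Hopf--Cole transformation chosen to absorb the quadratic gradient term; the desired bound $\|u^-\|_{L^\infty} \leq C$ would then follow from the ABP maximum principle for Pucci-type operators with unbounded first-order drift.

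First, I derive the working inequality for $v := u^-$. Since $F$ is independent of the $r$-slot under $(\text{SC}_0)$, substituting $u = -v$ on $\{u<0\}$ and using the oddness $\beta(-v) = -\beta(v)$ together with the symmetry of the quadratic form yields
\begin{align*}
\mathcal{M}^+(D^2 v) + b(x)|Dv| + \beta(v)\langle M(x) Dv, Dv\rangle \geq -\lambda c(x) v + h(x)
\end{align*}
in the $L^p$-viscosity sense on $\{v>0\}$, with $v = 0$ on $\partial\Omega$. On this set $\beta(v)\geq 0$ and $\langle M Dv, Dv\rangle \geq \mu_1 |Dv|^2 \geq 0$, so the quadratic term has the favorable sign for a Hopf--Cole absorption.

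Next, I set $w := \Phi(v)$ with $\Phi(0) = 0$ and $\Phi'(t) := \exp\bigl((\mu_2/\lambda_P) \int_0^t \beta(s)\, ds\bigr)$, so that $\lambda_P \Phi'' = \mu_2 \beta \,\Phi'$. The Pucci superadditivity $\mathcal{M}^+(X+Y) \geq \mathcal{M}^+(X) + \mathcal{M}^-(Y)$, applied with $Y = \Phi''(v)\, Dv \otimes Dv$ (a rank-one positive semidefinite matrix with $\mathcal{M}^-(Y) = \lambda_P \Phi''(v)|Dv|^2$), together with $\beta(v)\langle M Dv, Dv\rangle \leq \mu_2 \beta(v)|Dv|^2$, makes the quadratic contributions cancel and produces
\begin{align*}
\mathcal{M}^+(D^2 w) + b(x)|Dw| \geq \Phi'(v)\bigl(h - \lambda c v\bigr) \quad \text{in } \Omega, \qquad w = 0 \text{ on } \partial\Omega,
\end{align*}
in the $L^p$-viscosity sense. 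Applying the $L^p$-ABP estimate for $L^p$-viscosity subsolutions of $\mathcal{M}^+(D^2\cdot) + b|D\cdot|$, combined with $(\lambda cv - h)^+ \leq \Lambda_2 \|c\|_\infty v + h^-$ and $\Phi'(v) \leq \Phi'(M)$ for $v \leq M := \|u^-\|_{L^\infty}$, then leads to the implicit estimate
\begin{align*}
\Phi(M) \leq C\, \Phi'(M)\,\bigl(\Lambda_2 \|c\|_\infty M|\Omega|^{1/p} + \|h^-\|_{L^p}\bigr),
\end{align*}
with $C = C(n,p,\lambda_P,\Lambda_P,\|b\|_{L^p},\Omega)$.

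The main obstacle is converting this implicit inequality into an explicit upper bound on $M$. For $\beta(s) = s^k$, the ratio $\Phi(M)/\Phi'(M)$ decays like $\lambda_P/(\mu_2 M^k)$ as $M \to \infty$, so the bound is not directly restrictive in the large-$M$ regime. I expect to close it either by an iteration scheme, bootstrapping from an initial bound valid in the subcritical regime $C \Lambda_2 \|c\|_\infty |\Omega|^{1/p} < 1$, or by a blow-up/compactness argument: if $M_n := \|u_n^-\|_{L^\infty} \to \infty$, the rescaled $\tilde v_n := v_n/M_n$ satisfy an inequality in which the coefficient $M_n^{k+1}$ of the quadratic term blows up, forcing any viscosity limit $\tilde v$ to satisfy $\langle M D\tilde v, D\tilde v\rangle = 0$ on $\{\tilde v > 0\}$ and hence to be locally constant there, contradicting $\tilde v(x_\star) = 1$ with $\tilde v|_{\partial\Omega_\infty} = 0$. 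The technical heart in either route is a uniform $C^{0,\alpha}$ estimate for the rescaled supersolutions in the $L^p$-viscosity framework.
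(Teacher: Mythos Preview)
Your Hopf--Cole reduction and ABP step are carried out correctly, and you rightly observe that the resulting implicit inequality $\Phi(M)\le C\,\Phi'(M)\bigl(\Lambda_2\|c\|_\infty M|\Omega|^{1/p}+\|h^-\|_{L^p}\bigr)$ is vacuous for large $M$, since $\Phi(M)/\Phi'(M)\sim \lambda_P/(\mu_2 M^{k})\to 0$. The two ``closing'' mechanisms you propose, however, do not go through. An iteration cannot start: the inequality gives no control at all once $M$ is large, so there is no initial bound to bootstrap from, and the subcritical condition $C\Lambda_2\|c\|_\infty|\Omega|^{1/p}<1$ is not part of the hypotheses. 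The blow-up route has the gap you already flag: a uniform interior $C^{0,\alpha}$ (or even $C^0$-compactness) estimate for subsolutions of Pucci inequalities with quadratic gradient coefficient of size $M_n^{k+1}\to\infty$ is not available---standard results of Caffarelli--Crandall--Kocan--\'Swi\k{e}ch or Sirakov type require the natural-growth coefficient to stay bounded. Without compactness you cannot pass to a limit, and even if you could, the first-order degenerate relation $\tilde v^{k}\langle M D\tilde v,D\tilde v\rangle=0$ has no viscosity theory forcing $\tilde v$ to be locally constant.

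The paper's key idea is to choose the \emph{other} transformation: instead of $\Phi'(t)=\exp\bigl((\mu_2/\lambda_P)\!\int_0^t\beta\bigr)$, set
\[
w=\Psi(u^-):=\int_0^{u^-}\!e^{-\frac{\mu_2}{\lambda_P}\int_0^t\beta(s)\,ds}\,dt
=\int_0^{u^-}\!e^{-\frac{\mu_2}{\lambda_P(k+1)}t^{k+1}}\,dt.
\]
Because $\beta(s)=s^k$ with $k$ odd, the integrand is integrable on $(0,\infty)$, so $\Psi$ maps $[0,\infty)$ into a \emph{bounded} interval $[0,C_\beta)$ with $C_\beta=\int_0^\infty e^{-\frac{\mu_2}{\lambda_P(k+1)}t^{k+1}}dt<\infty$. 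Thus $0\le w\le C_\beta$ is automatic---no PDE needed. One then shows $w$ still satisfies a subsolution inequality for a coercive operator of the form $\mathcal{L}^+$; if a sequence $u_k^-(x_k)\to\infty$ existed, the corresponding $w_k(x_k)\to C_\beta$ at an interior point $x_0$, and the envelope $\overline w:=\sup_k w_k$ satisfies $\overline w(x_0)=C_\beta$ with $\overline w=0$ on $\partial\Omega$. The function $z:=1-\overline w/C_\beta\ge 0$ is then a viscosity supersolution of a coercive inequality with $z(x_0)=0$, and a V\'azquez-type strong maximum principle (using that the right-hand side behaves like $o\bigl(s(\ln s)^2\bigr)$ as $s\to 0^+$) forces $z\equiv 0$, contradicting $z=1$ on $\partial\Omega$. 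In short: the decisive move is not ABP but the boundedness of the $\Psi$-range combined with a strong maximum principle; your $\Phi$ goes in the direction where the range is unbounded, which is precisely why the argument stalls.
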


\begin{theorem}
    \label{teoapriori2}
   Let $\Omega \in C^{1,1}$ be a bounded domain. Suppose \eqref{SC0},  \eqref{H0} hold, let $\beta(s) = s^k$ for some $k\in \N$ odd, and $\Lambda_1, \Lambda_2$ with $0 < \Lambda_1 < \Lambda_2$. Then there exists $\delta_0 = \delta_0(u_0,\beta)$ such that, if $\mu_2 < \delta_0,$ then every positive $L^p$-viscosity solution $u$ of (\ref{Plambda}) satisfies $$  \|u\|_{L^{\infty}(\Omega)} \leq C, \; \; \text{for all } \lambda \in [\Lambda_1, \Lambda_2], $$
    where $C$ depends on $n,p,\mu_1, \Omega, \Lambda_1, \Lambda_2, \|b\|_{L^{q}(\Omega)}, \|c\|_{L^{\infty}(\Omega)}, \|h\|_{L^p (\Omega)}, \|u_0\|_{L^{\infty}(\Omega)}, \lambda_P, \Lambda_P, \Omega$, and on the set $G := \{x \in \Omega \; | \; c(x) = 0 \}$.
\end{theorem}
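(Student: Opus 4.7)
The plan is to proceed by contradiction combined with the exponential change of variables that is standard for equations with quadratic growth in the gradient. Suppose there exist sequences $\lambda_n \in [\Lambda_1, \Lambda_2]$ and positive $L^p$-viscosity solutions $u_n$ of \eqref{Plambda} with $M_n := \|u_n\|_{L^\infty(\Omega)} \to \infty$. Up to a subsequence, $\lambda_n \to \lambda^\ast \in [\Lambda_1, \Lambda_2]$.

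First, I would subtract the baseline $u_0$ provided by \eqref{H0}, setting $w_n := u_n - u_0$, which vanishes on $\partial \Omega$. Expanding $\beta(u_n)\langle M Du_n, Du_n\rangle$ around $Du_0$ yields the algebraic identity
\[
\beta(u_n)\langle M Du_n, Du_n\rangle - \beta(u_0)\langle M Du_0, Du_0\rangle = \beta(u_n)\langle M Dw_n, Dw_n\rangle + 2\beta(u_n)\langle M Du_0, Dw_n\rangle + (\beta(u_n)-\beta(u_0))\langle M Du_0, Du_0\rangle.
\]
Combined with \eqref{SC0}, this produces an $L^p$-viscosity inequality for $w_n$ in which the leading nonlinearity is $\beta(u_n)\langle M Dw_n, Dw_n\rangle$, with a mixed first-order term of coefficient bounded by $\mu_2 \|Du_0\|_{L^\infty(\Omega)} \beta(u_n)$ and a zero-order error of size $(\beta(u_n)-\beta(u_0))\|Du_0\|_{L^\infty(\Omega)}^2$. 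Both correction terms carry factors involving $u_0$, which is how the dependence $\delta_0 = \delta_0(u_0,\beta)$ enters.

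Next, I would perform the exponential transformation $v_n := \Psi(w_n)$ with $\Psi'(t) := \exp(\sigma B(t))$, $B(t) := \int_0^t \beta(s)\,ds$, for $\sigma > 0$ depending on $\lambda_P,\Lambda_P,\mu_1,\mu_2$. Using the subadditivity bound $\mathcal M^-(A+E)\geq \mathcal M^-(A)+\mathcal M^-(E)$ applied to the rank-one matrix $E = \Psi''(w_n)\,Dw_n\otimes Dw_n$, the term $\mathcal M^-(D^2 v_n)$ compensates $\beta(u_n)\langle M Dw_n, Dw_n\rangle$ up to a residual of size $\mu_2\,\beta(u_n)|Dw_n|^2$. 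When $\mu_2 < \delta_0(u_0,\beta)$ is small enough, this residual together with the mixed and zero-order terms from the previous step can be absorbed into the Pucci part and the $b|Dw_n|$ term, leaving a first-order inequality for $v_n$ of the form
\[
-\mathcal M^+(D^2 v_n) - \tilde b(x)|Dv_n| \leq \lambda_n c(x) u_n \Psi'(w_n) + \tilde h_n(x),
\]
with $\tilde b \in L^p(\Omega)$ and $\tilde h_n$ bounded in $L^p(\Omega)$ uniformly in $n$.

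Finally, I would apply the $L^p$-ABP estimate (Proposition \ref{ABP}) and the Koike–Święch Krylov–Safonov theory to the inequality for $v_n$, combined with Theorem \ref{Propimportante} to control the negative part, to get $\|v_n\|_{L^\infty(\Omega)}\leq C$ uniformly in $n$. Inverting $\Psi$, which is monotone with explicit polynomial control of its inverse, yields $\|w_n\|_{L^\infty(\Omega)}\leq C'$, hence $\|u_n\|_{L^\infty(\Omega)}\leq C' + \|u_0\|_{L^\infty(\Omega)}$, contradicting $M_n\to\infty$. The main obstacle is the calibration described in the previous paragraph: because Pucci operators are only subadditive, the residual left after absorbing the quadratic term is unavoidable and forces the smallness of $\mu_2$; its threshold $\delta_0(u_0,\beta)$ records the precise interplay between $\|u_0\|_{C^1}$, the growth of $\beta$, and the Pucci constants. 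The appearance of the set $G = \{c=0\}$ in the final constant reflects that on $G$ the $\lambda c u$ term disappears, so the bound relies entirely on the $h$-forcing through a Harnack-type localization on $G$.
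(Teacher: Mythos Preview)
Your approach has a genuine gap at the ABP step. After the exponential transformation you arrive at
\[
-\mathcal{M}^+(D^2 v_n) - \tilde b(x)|Dv_n| \leq \lambda_n c(x)\, u_n\, \Psi'(w_n) + \tilde h_n(x),
\]
and then propose to apply ABP to obtain a uniform bound on $\|v_n\|_{L^\infty}$. But the right-hand side is not uniformly bounded in $L^p$: both $u_n$ and $\Psi'(w_n)=\exp(\sigma B(w_n))$ blow up along the contradicting sequence, so ABP only returns a bound depending on $\|u_n\|_{L^\infty}$, which is exactly the quantity you are trying to control. The same objection applies to the zero-order error $\mu_2\,(\beta(u_n)-\beta(u_0))\,|Du_0|^2 \sim \mu_2\, u_n^k\,|Du_0|^2$ coming from the subtraction of $u_0$; this term is not absorbable by smallness of $\mu_2$ alone, since $u_n^k\to\infty$. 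In short, subtracting $u_0$ and performing a single exponential change does not close the loop, because the noncoercive zero-order term $\lambda c u$ remains on the wrong side with an unbounded coefficient.

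The paper's argument is structurally different and it is worth recording how. First, $u_0$ is used only for a \emph{localization} step: on the maximal zero set $G=\{c=0\}$ the equation reduces to $(P_0)$, and the comparison principle (Proposition~\ref{Comparison}) against $u_0$ transfers the blow-up to a point where $c\gneqq 0$. This comparison is precisely where the smallness $\mu_2<\delta_0(u_0,\beta)$ enters. After localizing to a (half-)ball where $c\gneqq 0$, the proof runs two changes of variable, $v_1=\psi_1(u)$ and $v_2=\psi_2(u)$, and combines the boundary quantitative strong maximum principle and the boundary weak Harnack inequality of \cite{sirakov_boundary_2018} applied to the \emph{supersolution} inequality for $v_1$ --- where $\lambda c(x)u$ has the good sign --- to obtain $\int v_1^{\varepsilon}\leq C$. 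Only then is the subsolution inequality for $v_2$ used: an asymptotic analysis (Claim~\ref{claimclave1}) shows $\nu(x)\leq C_s(c+h^+)(1+v_2^s)$ and $v_2\leq C(1+v_1^z)^{m_2/m_1}$, which lets the $\int v_1^{\varepsilon}$ bound control $\|\nu\|_{L^{p_1}}$ and allows the boundary local maximum principle to close. The noncoercive term is therefore handled by this two-sided Harnack sandwich rather than by ABP, and your proposal is missing this mechanism.
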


In the case of the Laplacian operator ($\lambda_P=\Lambda_P=1$), we consider the one-parameter problem:
\begin{equation}
\label{Plambdaprima}
    \tag{P$'_\lambda$}
    \begin{cases} 
    \begin{aligned}
    -\Delta u - b(x) |Du| &= \lambda c(x) u + \mu \beta(u) |Du|^2 + h (x) \quad \text{in } \Omega\\
    u &= 0 \hspace{4.5cm}\text{on } \partial \Omega,
    \end{aligned}
    \end{cases}
\end{equation}
in the sense of weak solutions with $u \in H_0^1(\Omega) \cap L^{\infty}(\Omega)$, in the following framework:
\[\begin{cases} 
\label{H}
\tag{H}
\Omega \subset \mathbb{R}^n, \ n \geq 3 \text{ is a bounded domain with } \partial \Omega \in C^{1,1}, c, h \in L^p(\Omega),\\ 
 b \in L^q_+(\Omega) \text{ with } n < p \leq q, \ c \gneqq0 , \,\beta \text{ as in \eqref{Hbeta}},\,\text{and } \mu >0,\\
\end{cases}\]
or more strongly,
\[\begin{cases} 
\label{HS}
\tag{HS}
\Omega \subset \mathbb{R}^n, \ n \geq 3 \text{ is a bounded domain with } \partial \Omega \in C^{1,1}, \\ 
c \in L^{\infty}(\Omega), h \in L^p(\Omega),b \in L_+^{q}(\Omega),  \text{ with } n < p \leq q , \\ c\gneqq0, 
\mu >0, \text{and } \beta(s) = s^k \text{ with $k \in \mathbb{N}$ odd.}
\end{cases}\]
Note that \eqref{HS} provides a model case $\beta(s) = s^k$ which automatically satisfies \eqref{Hbeta}.

Let us also consider the problem $(P_0^\prime)$ (which is \eqref{Plambdaprima} with $\lambda=0$), and a hypothesis on its solvability:
\begin{equation}
    \tag{H$'_0$}
    \label{H0prima}
    \text{$(P_0^\prime)$ has a weak solution }u_0 \in H^{1}_0(\Omega)\cap L^{\infty}(\Omega).
\end{equation}
   
   When $\beta(s) = s^k$, the existence condition in \eqref{H0prima} is satisfied by Theorem \ref{teo_existenciasolfuerte}, as far as   
   \[
      \mu  \, \|h\|_{L^p(\Omega)}^{k+1} < \varepsilon,
   \]
   where $\varepsilon = \frac{1}{(2 \tilde{C} D)^{k+2} C_1^k}$, with $C_1, \tilde{C}$, and $D$ given in Remark \ref{remark31}, see also {\cite[Proposition 3.1]{abdellaoui_remarks_2006}}.

Notice that solutions in $H_0^1 (\Omega)$ of \eqref{P0prima} are not expected to be unique in general, by the multiplicity result in {\cite[Corollary 3.6]{abdellaoui_remarks_2006}} for small $h$ (which amounts to $\lambda f(x)$ in \cite{abdellaoui_remarks_2006}) and $M(x)\equiv I$. 
Observe that this contrasts with the case of decreasing $\beta$ in which comparison always holds, cf.\ \cite[Lemma 2.2]{arcoya_remarks_2014}, and therefore the uniqueness of solutions is expected.

Under our assumption \eqref{H}, though, any weak solution of \eqref{P0prima} belonging to $H_0^1(\Omega)\cap L^\infty(\Omega)$ actually lies in $W^{2,p}(\Omega)$ by our regularity result, cf. Lemma~\ref{lemma:51}. Within this higher regularity class and for small values of $M(x)$, our comparison principle applies (see Remark~\ref{subsuperleq}), and therefore the solution $u_0$ of \eqref{P0prima} is unique.

The following theorem gives a global existence and multiplicity result for problem \eqref{Plambdaprima} with $\lambda >0$.
    \begin{restatable}{theorem}{multi}
    \label{multpositivas}
   Assume \eqref{H0prima}, \eqref{H} and set $\Sigma = \{
          (\lambda,u) \in \mathbb{R} \times C(\overline{\Omega}) \, \colon \, u \text{ solves \eqref{Plambdaprima}}  \}. $
          Then, there exists a continuum of solutions $\mathcal{C} \subset \Sigma$ such that $\mathcal{C} \cap (- \infty, 0] \times C(\overline{\Omega})$ and $\mathcal{C} \cap [0,+\infty) \times C(\overline{\Omega})$ are unbounded.
          
Assume \eqref{HS}, and \eqref{H0prima} with $u_{0}\geq 0$, $cu_{0} \gneqq 0$, then there exists a $\delta^{\star}>0$ such that, if $\mu < \delta^{\star}$, then every nonnegative solution of \eqref{Plambdaprima} with $\lambda>0$ satisfies $u\gg u_{0}$. Moreover, there is $\overline{\lambda}\in(0,+\infty)$ so that
\begin{enumerate}
    \item[(i)] for every $\lambda\in(0,\overline{\lambda})$, problem \eqref{Plambdaprima} has at least two solutions with
    \begin{itemize}
        \item $0\leq u_{0}\ll u_{\lambda,1}\ll u_{\lambda,2}$;
        \item if $\lambda_{1}<\lambda_{2}$, we have $u_{\lambda_{1},1}\ll u_{\lambda_{2},1}$;
        \item $\max u_{\lambda,2}\to+\infty$ and $u_{\lambda,1}\to u_{0}$ in $C^{1}_{0}(\overline{\Omega})$ as $\lambda\to 0$;
    \end{itemize}
    \item[(ii)] problem $(P_{\overline{\lambda}})$ has at least one nonnegative solution $u$;
    \item[(iii)] for every $\lambda>\overline{\lambda}$,  problem $(P_{\lambda})$ has no nonnegative solution.
\end{enumerate}
    \end{restatable}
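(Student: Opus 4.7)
My plan is to combine a global continuation argument of Rabinowitz type with the a priori bounds of Theorems~\ref{Propimportante} and~\ref{teoapriori2}, and then to refine the structure of the positive solution set by combining the strong maximum principle with a sub/supersolution scheme. Under \eqref{H0prima} I would rewrite \eqref{Plambdaprima} in fixed-point form $u=T_{\lambda}(u)$ on $C^{1}_{0}(\overline{\Omega})$, where $T_{\lambda}(v)$ solves $-\Delta u-b(x)|Du|=\lambda c(x)v+\mu\beta(v)|Dv|^{2}+h(x)$ with zero boundary data; by the regularity result (Lemma~\ref{lemma:51}) and Sobolev embedding, $T_{\lambda}$ is completely continuous, and $u_{0}$ is a fixed point at $\lambda=0$. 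Applying Rabinowitz's unbounded-continuum theorem at $(0,u_{0})$ produces a connected component $\mathcal{C}\subset\Sigma$; since a priori bounds prevent $\mathcal{C}$ from blowing up in $u$ at any fixed $\lambda$, both $\mathcal{C}\cap(-\infty,0]\times C(\overline{\Omega})$ and $\mathcal{C}\cap[0,+\infty)\times C(\overline{\Omega})$ must be separately unbounded in the $\lambda$-direction.

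For the strict ordering $u\gg u_{0}$ under \eqref{HS}, I set $v=u-u_{0}\geq 0$ and linearize the quadratic term as
\[
\mu\bigl(\beta(u)|Du|^{2}-\beta(u_{0})|Du_{0}|^{2}\bigr)=\mu\beta'(\xi)\,v\,|Du|^{2}+\mu\beta(u_{0})\langle Du+Du_{0},Dv\rangle,
\]
so that $v$ satisfies $-\Delta v-\tilde b(x)|Dv|-\tilde c(x)v\geq \lambda c(x)u_{0}\gneqq 0$ for coefficients $\tilde b,\tilde c$ controlled by $\|b\|_{L^{q}}$ and by $\mu$ times the a priori bounds on $u,u_{0}$ in $W^{2,p}$. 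Choosing $\delta^{\star}$ so small that these coefficients lie in the regime where our strong maximum principle and Hopf lemma (from Propositions~\ref{ABP}--\ref{Comparison}) apply, I conclude $v\gg 0$.

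Next, I set $\overline{\lambda}:=\sup\{\lambda>0:(P'_{\lambda})\text{ has a nonnegative solution}\}$. Its positivity follows from Step~1 together with the subsolution $u_{0}$ (valid because $cu_{0}\gneqq 0$ makes $u_{0}$ strictly sub at $\lambda>0$). Its finiteness is obtained by testing against the positive first eigenfunction $\varphi_{1}$ of $-\Delta$ with weight $c$: for $\lambda>\lambda_{1}$ the linear part becomes noncoercive, while $h\geq 0$ terms, $\mu\beta(u)|Du|^{2}\geq 0$, and $cu_{0}\gneqq 0$ force a contradiction when integrated against $\varphi_{1}$. For each $\lambda\in(0,\overline{\lambda})$ I pick $\lambda'\in(\lambda,\overline{\lambda})$, use any nonnegative solution of $(P'_{\lambda'})$ as a strict supersolution of $(P'_{\lambda})$ and $u_{0}$ as a strict subsolution, and run a monotone iteration (available because $\beta$ is nondecreasing and $\mu<\delta^{\star}$) to produce the minimal solution $u_{\lambda,1}$, which is strictly increasing in $\lambda$ by the strict comparison of Step~2. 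Convergence $u_{\lambda,1}\to u_{0}$ in $C^{1}_{0}(\overline{\Omega})$ as $\lambda\downarrow 0$ follows from the compactness provided by Theorem~\ref{teoapriori2} together with the uniqueness of $u_{0}$ discussed after \eqref{H0prima}. The second solution $u_{\lambda,2}$ is produced from $\mathcal{C}$: since $\mathcal{C}\cap[0,+\infty)\times C(\overline{\Omega})$ is unbounded but projects into $[0,\overline{\lambda}]$, the $C(\overline{\Omega})$-component must be unbounded along $\mathcal{C}$; a Leray--Schauder degree computation along this branch yields a second solution distinct from $u_{\lambda,1}$ for every $\lambda\in(0,\overline{\lambda})$, and if $\max u_{\lambda,2}$ stayed bounded as $\lambda\downarrow 0$, compactness would produce a nontrivial solution of $(P'_{0})$ different from $u_{0}$, contradicting uniqueness. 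Existence at $\overline{\lambda}$ follows by taking $\lambda_{n}\uparrow\overline{\lambda}$ and passing to the limit in $\{u_{\lambda_{n},1}\}$ using Theorem~\ref{teoapriori2}, and (iii) is the definition of $\overline{\lambda}$.

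The main obstacle is the superlinear gradient term $\mu\beta(u)|Du|^{2}$, which is incompatible with direct variational methods and, in the linearization of Step~2, introduces first-order coefficients that grow with $\|u\|_{W^{2,p}}$; controlling them is precisely what forces the smallness $\mu<\delta^{\star}$ and leans crucially on the a priori bound of Theorem~\ref{teoapriori2}. A second delicate point is the blow-up analysis of $u_{\lambda,2}$ as $\lambda\downarrow 0$: with no Palais--Smale structure available, I must rely on the topological structure of $\mathcal{C}$ together with the uniqueness of $u_{0}$ at $\lambda=0$ (obtained only within the improved regularity class $W^{2,p}$) to rule out a bounded second branch.
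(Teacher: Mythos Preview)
Your overall architecture is close to the paper's, but there are two genuine gaps.

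\textbf{The continuum part is internally inconsistent.} In the first paragraph you argue that ``a priori bounds prevent $\mathcal{C}$ from blowing up in $u$ at any fixed $\lambda$'', and conclude that both halves of $\mathcal{C}$ are unbounded in the $\lambda$-direction. Later, to produce the second solution, you argue that $\mathcal{C}\cap[0,+\infty)\times C(\overline{\Omega})$ projects into $[0,\overline{\lambda}]$ and therefore must be unbounded in the $u$-direction. These two claims contradict each other. Moreover, the a priori bounds you invoke (Theorems~\ref{Propimportante}, \ref{teoapriori2}, \ref{apriorilaplaciano}) hold only for $\lambda\in[\Lambda_1,\Lambda_2]$ with $\Lambda_1>0$ and only under \eqref{HS} with $\mu$ small, whereas the continuum statement is asserted under \eqref{H} alone; so the first claim is simply unavailable. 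The paper does not use a priori bounds here at all: it computes $i(I-\mathcal{M}_0,u_0)=1$ (via Theorem~\ref{thm:21coster} and the uniqueness of $u_0$) and then applies Rabinowitz directly, which already gives the two-sided unboundedness.

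\textbf{The second solution is not obtained the way you describe.} Your plan is to extract $u_{\lambda,2}$ from the continuum, using that $\mathcal{C}$ ``projects into $[0,\overline{\lambda}]$''. But $\overline{\lambda}$ is defined via \emph{nonnegative} solutions, and you give no reason why every $(\lambda,u)\in\mathcal{C}$ with $\lambda>0$ has $u\geq 0$; the continuum can perfectly well contain sign-changing solutions for $\lambda>\overline{\lambda}$, so the projection claim is unjustified. Even granting it, connectedness of $\mathcal{C}$ does not by itself give \emph{two} solutions at every $\lambda\in(0,\overline{\lambda})$, and ``a Leray--Schauder degree computation along this branch'' is not an argument until you say on which sets and with which homotopy. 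The paper's mechanism is different and essential: it proves an auxiliary nonexistence result (Lemma~\ref{36}, for the problem with the extra term $ac(x)$ and $a$ large), uses this as the endpoint of a homotopy to get $\deg(I-\mathcal{M}_\lambda,\mathcal{D})=0$ on a large set $\mathcal{D}=\{u_0\ll u,\ \|u\|_{C^1}<R_1\}$, and then the excision $\deg(I-\mathcal{M}_\lambda,\mathcal{D}\setminus\mathcal{S})=-1$ produces $u_{\lambda,2}$. Your sketch has no counterpart of Lemma~\ref{36}, and without it the degree argument does not close. A smaller but related gap: for $\overline{\lambda}>0$ you only say ``follows from the subsolution $u_0$''; you still need a supersolution of \eqref{Plambdaprima} above $u_0$ for small $\lambda>0$, which the paper obtains by invoking Theorem~\ref{teo_existenciasolfuerte} to solve an auxiliary problem with $|u|$ and $|\beta(u)|$ in place of $u$ and $\beta(u)$.
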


Theorem  \ref{multpositivas} seems to be new in the Laplacian setting. 
In fact, to the best of our knowledge, no previous works have addressed the question of multiplicity for equation \eqref{Plambdaprima} in what concerns the noncoercive case $\lambda>0$. In particular, our result complements the results for $\lambda=0$ established by Abdellaoui, Dall’Aglio and Peral in \cite{abdellaoui_remarks_2006}, by unveiling the optimal noncoercive parameter for which the multiplicity phenomena occurs under small harmonic-map structure.
On the other hand, this result can be regarded as a natural extension of the case $\beta\equiv 1$ in \cite{de_coster_multiplicity_2017, nornberg_priori_2019}. 

\subsection*{Acknowledgements}
The authors would like to thank Prof.\ Nikola Kamburov for his careful review of the thesis that originated this manuscript and helpful discussions. 

The authors were supported by Centro de Modelamiento Matemático (CMM) BASAL fund FB210005 for center of excellence from ANID-Chile.
R.\ Ziegele was also supported by ANID Fondecyt 1250156.
G.\ Nornberg was also supported by ANID Fondecyt grant 1220776, by Programa Regional MATHAMSUD230019, and by Programa de Cooperación Científica ECOS-ANID 240024.

\section{Auxiliary results}
\label{sec:Auxiliary}

In this section, we explore a change of variables which helps us to deal with the harmonic-map terms  \( \beta(u) |Du|^2 \), in light of the framework developed for quasilinear equations in \cite{abdellaoui_remarks_2006, hamid_correlation_2008, porretta_local_2004}.

\subsection{Variable change: Fully nonlinear setting}
One of the contributions of this work is the introduction of an integral change of variables in the context of our new class of fully nonlinear PDEs, in the spirit of \cite{abdellaoui_remarks_2006} for the Laplacian operator, see also \cite{oliveira_quadratic_2024} for a similar class involving Pucci operators.
This transformation generalizes the exponential change of variables used to deal with pure quadratic gradient terms in the viscosity sense (see {\cite{sirakov_solvability_2010}, also \cite[Lemma 2.20]{saller_nornberg_methods_2018}). 

\begin{proposition}{(Integro-exponential variable change)}
\label{lemma:cv}
    Let $p \geq n$, $\beta$ as in \eqref{Hbeta} and $u \in W_{loc}^{2,p}(\Omega)$ a nonnegative function. For $m>0$ we define: $$v = \psi(u) := \int_{0}^{u} e^{m\int_{0}^{s} \beta(t) dt}ds, \quad w = \Psi (u) := \int_{0}^{u} e^{-m\int_{0}^{s} \beta(t) dt}ds. $$

    Then, a.e. in $\Omega$ we have $Dv = \psi'(\psi^{-1}(v))Du$, $Dw = \Psi'(\Psi^{-1}(w)) Du$, and
    \begin{equation}
        \mathcal{M}_{\lambda, \Lambda}^{\pm}(D^2u) + m \beta(u)\lambda |Du|^2 \leq \frac{\mathcal{M}^{\pm}_{\lambda,\Lambda}(D^2v)}{\psi'(\psi^{-1}(v))} \leq  \mathcal{M}_{\lambda, \Lambda}^{\pm}(D^2u) + m \beta(u)\Lambda |Du|^2, 
        \label{ineq:lema311}
    \end{equation}
        \begin{equation}
        \mathcal{M}_{\lambda, \Lambda}^{\pm}(D^2u) - m \beta(u)\Lambda |Du|^2 \leq \frac{\mathcal{M}^{\pm}_{\lambda,\Lambda}(D^2w)}{\Psi'(\Psi^{-1}(w))} \leq  \mathcal{M}_{\lambda, \Lambda}^{\pm}(D^2u) - m \beta(u)\lambda |Du|^2 .
        \label{ineq:lema312}
    \end{equation}

    Furthermore, the same inequalities hold in the $L^p$-viscosity sense if $u$ is continuous. For example, if $u \in C(\Omega)$ is a nonnegative $L^p$-viscosity solution of
    \begin{equation}
        \mathcal{M}_{\lambda,\Lambda}^{+}(D^2u) + b(x) |Du| +  \mu\beta(u) |Du|^2 + c(x)u \geq f(x) \quad \text{in } \Omega
        \label{eq:lema311}
    \end{equation}
    where $b \in L^{q}_{+}(\Omega)$, $c,f \in L^p(\Omega)$ and $n \leq p \leq q$, then $v = \psi(u)$ with $m = \frac{\mu}{\lambda}$ (resp. $m = \frac{\mu}{\Lambda})$, is an $L^p$-viscosity solution of
    \begin{equation}
        \mathcal{M}^{+}_{\lambda,\Lambda}(D^2 v) + b(x) |Dv| - g(v) f(x) + c(x)(g(v)+1)\psi^{-1}(v) \geq f(x) \quad \text{in } \Omega
        \label{eq;lema312}
    \end{equation}
where $ g(v) = \psi'(\psi^{-1}(v)) - 1 $ is a positive, nondecreasing, and convex function satisfying
$
g(v) \geq c v - 1
$
for some constant \( c > 0 \) and all \( v \geq 0 \),  and
\begin{equation}
\label{gintegral}
    g(v) =m\int_0^v \beta(\psi^{-1}(t)) dt.
\end{equation}
\end{proposition}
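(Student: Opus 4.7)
The plan is to break the proof into three stages: a pointwise almost-everywhere computation on $W^{2,p}_{\mathrm{loc}}$, a lifting to the $L^{p}$-viscosity sense via a test-function composition, and finally the specialization to the subsolution case together with the properties of $g$. Set $h(u):=m\int_{0}^{u}\beta(t)\,dt$. Since $\beta$ satisfies \eqref{Hbeta}, $h$ is $C^{1,1}_{\mathrm{loc}}$, nonnegative and even, so $\psi'(u)=e^{h(u)}>0$ and $\Psi'(u)=e^{-h(u)}>0$ everywhere, making $\psi,\Psi$ global $C^{2}$ diffeomorphisms of $\mathbb{R}$. Differentiating $v=\psi(u)$ almost everywhere in $\Omega$ yields $Dv=\psi'(u)Du$ and
\[
D^{2}v=\psi'(u)\bigl(D^{2}u+m\beta(u)\,Du\otimes Du\bigr),
\]
with the analogous identity for $w=\Psi(u)$ carrying the opposite sign on the rank-one correction. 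To reach \eqref{ineq:lema311}--\eqref{ineq:lema312} I would combine the $1$-homogeneity of $\mathcal{M}^{\pm}_{\lambda,\Lambda}$ (to pull out the positive scalar $\psi'(u)$) with the two-sided bound $\mathcal{M}^{\pm}(A)+\mathcal{M}^{-}(B)\leq\mathcal{M}^{\pm}(A+B)\leq\mathcal{M}^{\pm}(A)+\mathcal{M}^{+}(B)$, taken at $A=D^{2}u$ and $B=\pm m\beta(u)\,Du\otimes Du$. The matrix $Du\otimes Du$ is rank one and positive semidefinite with unique nonzero eigenvalue $|Du|^{2}$, so $\mathcal{M}^{-}(Du\otimes Du)=\lambda|Du|^{2}$ and $\mathcal{M}^{+}(Du\otimes Du)=\Lambda|Du|^{2}$; since $m\beta(u)\geq 0$ for $u\geq 0$, the a.e.\ inequalities follow.

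For the $L^{p}$-viscosity extension I would use the standard observation that $\psi^{-1}$ is $C^{2}$: if $\phi\in W^{2,p}_{\mathrm{loc}}(\Omega)$ touches the continuous function $v=\psi(u)$ from one side at $x_{0}$, then $\psi^{-1}\circ\phi\in W^{2,p}_{\mathrm{loc}}(\Omega)$ touches $u$ from the same side there, and the structural inequality applied pointwise a.e.\ to $\psi^{-1}\circ\phi$, together with the $L^{p}$-viscosity inequality for $u$ on sets of positive measure around $x_{0}$, transfers the inequality to $v$. Specializing to the subsolution case of \eqref{eq:lema311}, choosing $m=\mu/\lambda$ and using the left half of \eqref{ineq:lema311} gives, in the $L^{p}$-viscosity sense,
\[
\mathcal{M}^{+}(D^{2}v)\geq\psi'(u)\bigl(\mathcal{M}^{+}(D^{2}u)+\mu\beta(u)|Du|^{2}\bigr)\geq \psi'(u)\bigl(f-b|Du|-c\,u\bigr).
\]
Substituting $\psi'(u)|Du|=|Dv|$, $\psi'(u)=g(v)+1$, $u=\psi^{-1}(v)$ and splitting $(g(v)+1)f=f+g(v)f$ then produces \eqref{eq;lema312}.

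Finally, for the properties of $g$ I differentiate $g(v)=\psi'(\psi^{-1}(v))-1$ using $(\psi^{-1})'(v)=1/\psi'(\psi^{-1}(v))$ and $\psi''(u)=m\beta(u)\psi'(u)$, obtaining $g'(v)=m\beta(\psi^{-1}(v))$; combined with $g(0)=0$ this is exactly \eqref{gintegral}. Nonnegativity and monotonicity on $[0,\infty)$ are immediate from $\beta\geq 0$ there, and convexity follows from $g''(v)=m\beta'(\psi^{-1}(v))/\psi'(\psi^{-1}(v))\geq 0$ a.e., since $\beta$ is nondecreasing and locally Lipschitz. The affine lower bound $g(v)\geq cv-1$ comes from $g'(v)=m\beta(\psi^{-1}(v))\to+\infty$ as $v\to+\infty$ (by the growth assumption in \eqref{Hbeta}): convexity with $g(0)=0$ forces $g$ to lie above its tangent at any point $v_{c}$ with $g'(v_{c})=c$, yielding an affine minorant of slope $c$ whose intercept is at most $-1$ once $c$ is chosen small enough. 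The step I expect to be the most delicate is the $L^{p}$-viscosity passage, where one must verify carefully that the chain-rule identity relating $D^{2}(\psi^{-1}\circ\phi)$ to $D^{2}\phi$ is compatible with the sets of positive measure on which $L^{p}$-viscosity inequalities are tested; the remainder is bookkeeping with the chain rule and elementary convex analysis.
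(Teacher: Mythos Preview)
Your proposal is correct and follows essentially the same approach as the paper: chain-rule identities plus Pucci sub/superadditivity for the a.e.\ inequalities, composition with $\psi^{-1}$ for the $L^p$-viscosity transfer, and direct differentiation for the properties of $g$. The only notable differences are that the paper fills in the delicate viscosity step you flag via an explicit $\delta$-continuity argument (approximating $\psi'(\psi^{-1}(\varphi))$ by $\psi'(\psi^{-1}(v))$ on shrinking sets $\mathcal{O}_\delta$ and passing to the limit by stability), and it obtains the affine minorant $g(v)\geq cv-1$ by showing that $v\mapsto \psi'(\psi^{-1}(v))/v$ attains a positive minimum on $(0,\infty)$ rather than by your tangent-line argument.
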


\begin{proof}
First, let us establish the basic properties of 
$
g(v) = \psi'(\psi^{-1}(v)) - 1.
$
It is immediate that $g$ is nonnegative, since $\psi'(\psi^{-1}(v)) \geq 1$, with equality attained only at $v = 0$. 
Differentiating, we obtain
\[
g'(v) = \left(\psi'(\psi^{-1}(v)) - 1 \right)' = m\beta(\psi^{-1}(v)).
\]
From this expression it follows that $g'(v) > 0$ for all $v \geq 0$, and further that $g$ is convex, as both $\beta$ and $\psi^{-1}$ are increasing functions.

Next, to prove that $g(v) \geq cv -1$, we claim that there exists a constant $C>0$ such that $$\psi_1'(\psi_1^{-1}(v_1)) \geq C v_1.$$
To prove it, we define $ F(v_1) := \frac{\psi_1'(\psi_1^{-1}(v_1))}{v_1}$. The function \(F\) is continuous on \((0,\infty)\), and moreover
\[
\lim_{v_1 \to 0^+} F(v_1) = +\infty, 
\qquad 
\lim_{v_1 \to +\infty} F(v_1) = +\infty.
\]
Hence, by continuity, \(F\) attains a minimum on \((0,\infty)\). Setting $C := \min_{v_1>0} F(v_1) > 0$, we obtain for every \(v_1>0\) that
\[
\psi_1'(\psi_1^{-1}(v_1)) = F(v_1)\, v_1 \geq C v_1,
\]
\noindent
proving the claim. Thus, since $g(v) = \psi'(\psi^{-1}(v)) -1$, we conclude that $g(v) \geq Cv -1$.
Finally, identity \eqref{gintegral} follows from \(g(0) = 0\) and  
$
g'(v) = m\beta\bigl(\psi^{-1}(v)\bigr) ,
$
by the Fundamental Theorem of Calculus.

To prove \eqref{ineq:lema311} and \eqref{ineq:lema312}, we proceed by direct computations. 
Indeed, since 
$
\psi'(u) = e^{m \int_0^u \beta(t)\, dt},
$
\[
\psi''(u) = m \beta(u) e^{m \int_0^u \beta(t)\, dt} 
           = m \beta(u)\, \psi'(u),
\]
and so,
$
Dv = \psi'(u)\, Du, \;
D^2 v       = m \beta(u)\, \psi'(u)\, Du \otimes Du + \psi'(u)\, D^2 u.
$
Consequently,
\begin{equation*}\begin{aligned}
\mathcal{M}^{\pm}_{\lambda,\Lambda}(D^2v) &= \psi'(u) \mathcal{M}^{\pm}_{\lambda,\Lambda}(m \beta(u) Du \otimes Du + D^2u) \quad \text{(since $\psi'(u) >0$)}.
\end{aligned} \end{equation*}

\noindent
Then, using basic properties of Pucci operators (see for instance {\cite[Lemma 2.1]{sirakov_solvability_2010}}) and the fact that $spec(Du \otimes Du) = \{0, \dots, |Du|^2 \}$, for $u$ nonnegative and $\beta$ satisfying \eqref{Hbeta} we obtain that,
\begin{equation*}
\begin{aligned}
    \mathcal{M}_{\lambda,\Lambda}^{\pm}(D^2u) + \lambda m \beta(u) |Du|^2 \leq \frac{\mathcal{M}_{\lambda,\Lambda}^{\pm}(D^2v)}{\psi'(\psi^{-1}(v))}\leq  \mathcal{M}_
    {\lambda,\Lambda}^{\pm}(D^2u) +\Lambda m \beta(u)|Du|^2.
    \end{aligned} \end{equation*}
Meanwhile, for $u$ nonpositive with $\beta$ satisfying \eqref{Hbeta},
\begin{equation}
    \mathcal{M}_{\lambda,\Lambda}^{\pm}(D^2u) + \Lambda m \beta(u) |Du|^2 \leq \frac{\mathcal{M}_{\lambda,\Lambda}^{\pm}(D^2v)}{\psi'(\psi^{-1}(v))}\leq  \mathcal{M}_
    {\lambda,\Lambda}^{\pm}(D^2u) + \lambda m \beta(u) |Du|^2.
\end{equation}
\noindent
Analogously, $\Psi'(u) = e^{-m \int_0^u \beta(t)dt}$, $\Psi'' (u) = - m\beta(u) \Psi'(u)$, and so
$
    Dw = \Psi'(u) Du, \;
    D^2w  = -m\beta(u)\Psi'(u) Du \otimes Du + \Psi'(u) D^2 u,
$
and $$\mathcal{M}^{\pm}(D^2w) = \Psi'(u) \mathcal{M}^{\pm}(- m\beta(u) Du \otimes Du + D^2u) \quad \text{(since $\Psi'(u) >0$)} ,  $$
\noindent
and, via similar calculations, we obtain that if $u$ is nonnegative and $\beta$ satisfies \eqref{Hbeta}, then
\begin{equation}
\mathcal{M}_{\lambda, \Lambda}^{\pm}(D^2u) - m \beta(u)\Lambda |Du|^2 \leq \frac{\mathcal{M}^{\pm}_{\lambda,\Lambda}(D^2w)}{\Psi'(\Psi^{-1}(w))} \leq  \mathcal{M}_{\lambda, \Lambda}^{\pm}(D^2u) - m \beta(u)\lambda |Du|^2 ,
\end{equation}
while for $u$ nonpositive with $\beta$ satisfying \eqref{Hbeta},
\begin{equation}
\mathcal{M}_{\lambda, \Lambda}^{\pm}(D^2u) - m \beta(u)\lambda |Du|^2 \leq \frac{\mathcal{M}^{\pm}_{\lambda,\Lambda}(D^2w)}{\Psi'(\Psi^{-1}(w))} \leq  \mathcal{M}_{\lambda, \Lambda}^{\pm}(D^2u) - m \beta(u)\Lambda |Du|^2 .
\end{equation}

Now, let $u \in C(\Omega)$ be a positive $L^p$-viscosity solution of \eqref{eq:lema311}. For simplicity, we will consider $m=1$. Also, let $\varphi \in W_{loc}^{2,p}(\Omega)$ such that $v-\varphi$ attains a local maximum in $x_0 \in \Omega$, namely $v \leq \varphi$ and $v(x_0) = \varphi(x_0)$. Let $\varepsilon > 0$ and $\Omega'$ a subdomain such that $x_0 \in \Omega'$ and $\Omega' \subset \subset \Omega $. Set $a = \|u\|_{L^{\infty} \left(\Omega ' \right)}$.\\

\noindent
Define $\phi = \psi^{-1}(\varphi) \in W^{2,p}_{loc}(\Omega)$ (hence $D\phi = \frac{D\varphi}{\psi'(\psi^{-1}(\varphi))}$). Then, by construction,  $u \leq \phi$ and $u-\phi$ attains a maximum in $x_0$. By using \eqref{ineq:lema311} with the pair $\varphi,\phi $, we get
$$ \frac{\mathcal{M}^{+}_{\lambda,\Lambda}(D^2 \varphi)}{\psi'(\psi^{-1}(\varphi))} \geq \mathcal{M}^{+}_{\lambda,\Lambda}(D^2 \phi) + \beta(\phi)|D \phi|^2 \geq \mathcal{M}^+_{\lambda,\Lambda}(D^2 \phi) + \beta(u)|D\phi|^2.$$

\noindent
Exploiting the fact that $u$ is a subsolution of \eqref{eq:lema311} we get

\begin{equation}
    \frac{\mathcal{M}^{+}_{\lambda,\Lambda}(D^2\varphi)}{\psi'(\psi^{-1}(\varphi))} \geq \mathcal{M}^{+}_{\lambda,\Lambda}(D^2 \phi) + \beta(u)|D \phi|^2 \geq f(x) - b(x) |D \phi| - c(x) u - \tilde{\varepsilon} \quad \text{a.e.  } \mathcal{O}
    \label{eq:ch3dem1}
\end{equation}
where $\tilde{\varepsilon} = \frac{\varepsilon}{\psi' (a) +1}$ and $\mathcal{O} \subset \Omega'$ is an open set of positive measure.\smallskip

Now let $\delta \in (0,1)$. Since $(v-\varphi)(x_0) = 0$, $\varphi \in W^{2,p}(\mathcal{O}) \subset C(\mathcal{O})$ (since $p \geq n$) and $\psi$ is a continuous strictly increasing function, with $\psi'$ and $\psi^{-1}$ being also continuous, thus
$$ v \leq \varphi \quad \iff \quad \psi'(\psi^{-1}(v)) \leq \psi' (\psi^{-1}(\varphi)) \in C(\mathcal{O}), $$
and there exists an $\mathcal{O}_\delta \subset \mathcal{O}$ such that, 
$$\psi'(\psi^{-1}(v)) \leq \psi'(\psi^{-1}(\varphi)) \leq \psi'(\psi^{-1}(v)) + \delta \quad \text{in } \mathcal{O}_\delta.$$

\noindent
Then, from \eqref{eq:ch3dem1} we obtain
$$
\begin{aligned}
\mathcal{M}^{+}_{\lambda,\Lambda}(D^2 \varphi) + \underbrace{b(x)|D\phi|\psi'(\psi^{-1}(\varphi))}_{= \;b(x) |D \varphi|} \geq& f(x) \psi'(\psi^{-1}(\varphi)) - c(x) u \psi'(\psi^{-1}(\varphi)) - \tilde{\varepsilon}\psi'(\psi^{-1}(\varphi)),\\
\geq& f(x) \psi'(\psi^{-1}(\varphi)) - c(x) u \left(\psi'(u) + \delta \right) - \tilde{\varepsilon}\left( \psi'(u) + \delta\right).
\end{aligned}
$$
\noindent
By rearranging the terms, we have
$$
\mathcal{M}^{+}_{\lambda,\Lambda}(D^2 \varphi) + b(x) |D\varphi| \geq f^{+} \psi'(u) - f^{-} \left(\psi'(u) + \delta\right) \hspace{5.5cm}
$$
\[
\hspace{5cm} + \left(c^+ u^- + c^{-} u^+ \right)\psi'(u) - \left( c^{+} u^{+} + c^{-} u^{-}\right) \left(\psi'(u) + \delta \right) - \varepsilon \quad \text{a.e. } \mathcal{O}_{\delta},
\]  
i.e., we have shown that $v$ is a subsolution in the $L^p$-viscosity sense of
\begin{equation*}
    \mathcal{M}^{+}_{\lambda,\Lambda}(D^2v) +b(x) |Dv| \geq f(x)\left(g(v) +1 \right) - c(x) u \left(g(v) + 1 \right) - \left(f^- + c^+ u^+ + c^{-} u^{-} \right) \delta \quad \text{a.e. in } \Omega, 
\end{equation*}

\noindent
for every $\delta \in (0,1)$. The desired conclusion follows by standard stability of viscosity solutions, by letting $\delta \rightarrow 0$ and so $\| f_{\delta}\|_{L^{p}(B)} \rightarrow 0 $, for any $B \subset \subset \Omega$,  where $f_\delta := (f^- + c^+ u^+ + c^{-} u^{-})\delta$. The remaining inequalities in the $L^p$-viscosity sense are proven in a similar way.
\end{proof}

\begin{remark}\label{remark:cv}
We can obtain Proposition \ref{lemma:cv} without a sign condition on $u$, but assuming that $\beta$ satisfies the following variant of hypothesis \eqref{Hbeta}, which is given by 

\begin{equation}
    \label{Hbetaprima}
    \tag{H$_\beta'$}
    \begin{aligned}
        &\beta : \mathbb{R} \to \mathbb{R^+} \text{ is locally Lipschitz, and so that } \lim_{x \to +\infty} \beta(x) = + \infty.
    \end{aligned}
\end{equation}

The model $\tilde{\beta}$ satisfying \eqref{Hbetaprima} is given by $\tilde{\beta}(s) = \max\{0,\beta(s)\}$, where $\beta$ satisfies \eqref{Hbeta}. In this case, the inequalities that hold in Proposition \ref{lemma:cv} are those for nonnegative solutions.
\end{remark}

\subsection{Variable change: The Laplacian setting}

First, as a direct consequence of the previous proposition for the Laplacian operator, a variable change which comprises equalities instead of inequalities is reached, with no sign condition on the solutions.
\begin{corollary}
\label{cor:cv}
  If $\lambda = \Lambda = 1$, Proposition~\ref{lemma:cv} yields
$$
    \frac{\Delta v}{\psi'(\psi^{-1}(v))} = \Delta u + m\beta(u)|Du|^2, 
\quad
    \frac{\Delta w}{\Psi'(\Psi^{-1}(w))} = \Delta u - m\beta(u)|Du|^2, 
$$
in the viscosity sense, with no sign condition required on $u$. 
\end{corollary}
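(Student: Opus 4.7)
The plan is to observe that when $\lambda = \Lambda = 1$ the Pucci operators $\mathcal{M}^\pm_{\lambda,\Lambda}$ both reduce to $\Delta = \mathrm{tr}$, so the upper and lower bounds appearing in \eqref{ineq:lema311} and \eqref{ineq:lema312} coincide and collapse to a single identity. Moreover, the reason a sign assumption on $u$ was needed in Proposition \ref{lemma:cv} was precisely the asymmetric way in which $\mathcal{M}^+$ and $\mathcal{M}^-$ treat the rank-one perturbation $\pm m\beta(u)\, Du\otimes Du$: when $\lambda=\Lambda$ this asymmetry disappears, so the result follows with no sign condition on $u$.

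Concretely, for $u \in W^{2,p}_{loc}(\Omega)$ one computes directly
\[
D^{2}v \;=\; \psi'(u)\, D^{2}u \;+\; m\beta(u)\,\psi'(u)\, Du\otimes Du,
\]
and taking traces, using $\mathrm{tr}(Du\otimes Du)=|Du|^{2}$ and dividing by $\psi'(u)=\psi'(\psi^{-1}(v))>0$, yields the first a.e.\ identity. The computation for $w=\Psi(u)$ is identical up to the sign of the second term, giving the second identity. Thus both equalities hold in the strong sense, and in particular without any sign restriction on $u$.

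To upgrade to the $L^p$-viscosity sense for $u\in C(\Omega)$, the argument given in the proof of Proposition \ref{lemma:cv} can be re-run symmetrically. If $\varphi \in W^{2,p}_{loc}(\Omega)$ touches $v$ from above (resp.\ below) at $x_{0}$, then $\phi:=\psi^{-1}(\varphi)$ touches $u=\psi^{-1}(v)$ from the same side at $x_{0}$; the pointwise identity
\[
\Delta\varphi \;=\; \psi'(u)\bigl(\Delta\phi + m\beta(u)|D\phi|^{2}\bigr)
\]
(which holds a.e.\ since $\lambda=\Lambda$) converts the subsolution/supersolution property of $u$ directly into the corresponding property of $v$. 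The same $\delta\to 0$ stability step used in Proposition \ref{lemma:cv}, applied now on both sides thanks to continuity of $\psi,\psi',\psi^{-1}$ and the $L^{p}$-vanishing of the error term $f_{\delta}$, closes the argument. An identical computation handles $w=\Psi(u)$.

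The step that requires the most care is verifying that the viscosity argument genuinely runs on both sides: in Proposition \ref{lemma:cv} only one inequality was needed, whereas here one must turn sub- and supersolution information into an equality. Once one observes, however, that the sign hypothesis on $u$ in Proposition \ref{lemma:cv} served only to fix which of the two Pucci extremals was in play, the symmetry produced by $\lambda=\Lambda=1$ makes both directions available simultaneously, and the corollary follows.
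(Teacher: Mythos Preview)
Your proposal is correct and follows the paper's own approach: the paper gives no separate proof of this corollary, presenting it simply as a direct consequence of Proposition~\ref{lemma:cv}, and your argument spells out exactly why---when $\lambda=\Lambda=1$ the two-sided bounds in \eqref{ineq:lema311}--\eqref{ineq:lema312} (and their sign-reversed counterparts for nonpositive $u$) all collapse to the same equality, so no sign assumption is needed. Your additional discussion of the viscosity passage is more detailed than anything the paper provides, but it is in the same spirit as the viscosity argument already given in Proposition~\ref{lemma:cv}.
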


Next, we consider the case of weak solutions, in the $H^{1}_0(\Omega)$ sense, used in \cite{abdellaoui_remarks_2006, hamid_correlation_2008, porretta_local_2004}.
We provide a proof in what follows that extends {\cite[Lemma 3]{jeanjean_existence_2013}} to our case, for completeness.

\begin{proposition}
\label{lapla} Let $b \in L_+^q(\Omega)$, and $c, f \in L^p(\Omega)$, with
$
    n < p \leq q.
$    
 Then   $u \in H_0^1(\Omega)\cap L^\infty(\Omega)$ is a weak solution of
\begin{equation}
    \Delta u + b(x)|Du| + \mu \beta(u)|Du|^2 + c(x)u = f(x) 
    \quad \text{in } \Omega,
\label{eqlap}
\end{equation}
if and only if $v = \psi(u) \in H^1_0(\Omega) \cap L^{\infty}(\Omega)$ with $m = \mu$ is a weak solution of
\begin{equation}
\label{eqlap2}
    \Delta v + b(x)|Dv| - g(v) f(x) + c(x)\big((g(v)+1)\psi^{-1}(v)\big) = f(x) 
    \quad \text{in } \Omega,
\end{equation}
where $g(v) = \psi'(\psi^{-1}(v)) - 1$ is a nonnegative even function satisfying 
$g(0)=0$. Moreover, $g$ is decreasing on $(-\infty,0)$, increasing on $(0,+\infty)$, 
and convex in $\mathbb{R}$.
\end{proposition}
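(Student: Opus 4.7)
The plan is to prove the equivalence by transferring the pointwise identity $\Delta v = \psi'(u)(\Delta u + m\beta(u)|Du|^2)$ from Corollary \ref{cor:cv} to the weak setting through a careful choice of test functions, and to read off the properties of $g$ directly from its definition. First I would establish the claimed properties of $g$ by direct computation: since $\psi'(s) = e^{m\int_0^s \beta(t)\,dt}$ and $\beta$ is odd, the exponent is an even function of $s$, so $\psi'$ is even and $\psi$ itself is odd; consequently $\psi^{-1}$ is odd and $g(v) = \psi'(\psi^{-1}(v)) - 1$ is even. Nonnegativity and $g(0) = 0$ follow from $\psi'(s) \geq 1$, with equality only at $s = 0$. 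Differentiating yields $g'(v) = m\beta(\psi^{-1}(v))$, which is nonnegative on $[0,+\infty)$ and nonpositive on $(-\infty,0]$ by the sign condition $\beta(s)s \geq 0$ in \eqref{Hbeta}, so $g$ is increasing on $(0,+\infty)$ and decreasing on $(-\infty,0)$; a further differentiation gives $g''(v) = m\beta'(\psi^{-1}(v))/\psi'(\psi^{-1}(v)) \geq 0$ a.e., since $\beta$ is nondecreasing, yielding convexity.

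For the forward implication, I would first verify that $v = \psi(u) \in H_0^1(\Omega) \cap L^\infty(\Omega)$: since $u$ is bounded, $\psi$ is Lipschitz on the range of $u$, so $v \in L^\infty$ and $Dv = \psi'(u)Du \in L^2$ by the $H^1$ chain rule; the homogeneous boundary condition is preserved because $\psi(0) = 0$. Given an arbitrary test function $\phi \in H_0^1(\Omega) \cap L^\infty(\Omega)$, I would plug $\varphi := \psi'(u)\phi = (g(v)+1)\phi$ into the weak formulation of \eqref{eqlap}. This $\varphi$ is admissible because $u \in L^\infty$ forces $\psi'(u) \in L^\infty \cap H^1$ with $D[\psi'(u)] = m\beta(u)\psi'(u)Du \in L^2$, while $\psi'(0) = 1$ and $\phi|_{\partial\Omega} = 0$ give the vanishing trace. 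Expanding $D\varphi = m\beta(u)\psi'(u)Du\,\phi + \psi'(u)D\phi$, the weak formulation for $u$ becomes
\begin{align*}
-\int_\Omega Dv\cdot D\phi\,dx &- m\int_\Omega \beta(u)\psi'(u)|Du|^2\phi\,dx + \int_\Omega b|Du|\psi'(u)\phi\,dx \\
&+ \mu\int_\Omega \beta(u)\psi'(u)|Du|^2\phi\,dx + \int_\Omega c u \psi'(u)\phi\,dx = \int_\Omega f\psi'(u)\phi\,dx.
\end{align*}
Choosing $m = \mu$ cancels the two harmonic-map terms; substituting $|Dv| = \psi'(u)|Du|$, $\psi'(u) = g(v)+1$, and $u = \psi^{-1}(v)$, and splitting $f(g(v)+1) = f + fg(v)$, one recovers precisely the weak formulation of \eqref{eqlap2}.

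For the converse, I would run the same argument in reverse: starting from $v \in H_0^1 \cap L^\infty$ solving \eqref{eqlap2}, set $u := \psi^{-1}(v) \in H_0^1 \cap L^\infty$ and, for any $\phi \in H_0^1 \cap L^\infty$, test \eqref{eqlap2} against $\varphi := \Psi'(u)\phi = \phi/\psi'(u)$, which is again admissible by the same functional-analytic reasoning applied to $\Psi'(u) = e^{-m\int_0^u \beta(t)\,dt}$. The algebra mirrors the forward direction and reproduces the weak formulation of \eqref{eqlap}. The main technical obstacle I anticipate is the bookkeeping needed to verify that each composite and product remains in the correct function space for weak testing; in particular, the natural-growth term $\mu\beta(u)|Du|^2$ lies only in $L^1$, which is precisely why test functions must be taken in $L^\infty$, and this constraint is what makes the choice $\varphi = \psi'(u)\phi$ both admissible \emph{and} the one that engineers the cancellation when $m = \mu$. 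The chain-rule identity $D[\psi'(u)] = \psi''(u)Du$ in $H^1$ hinges on $u \in L^\infty$ together with the local Lipschitz continuity of $\beta$ in \eqref{Hbeta}.
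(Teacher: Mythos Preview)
Your proposal is correct and follows essentially the same approach as the paper: both prove the equivalence by testing \eqref{eqlap} with $\varphi = \psi'(u)\phi$ (and the converse with $\varphi = \phi/\psi'(u)$) so that the choice $m=\mu$ cancels the quadratic-gradient term, and both derive the properties of $g$ from $g'(v)=m\beta(\psi^{-1}(v))$. The only minor differences are organizational---you treat the properties of $g$ first and work directly with test functions in $H_0^1\cap L^\infty$, while the paper takes $\phi\in C_0^\infty$ and argues convexity from the monotonicity of $g'$ rather than computing $g''$ (which is slightly cleaner since $\beta$ is only locally Lipschitz).
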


\begin{proof}
Let $u \in H_0^1(\Omega) \cap L^{\infty}(\Omega)$ be a weak solution of \eqref{eqlap}. This means that
\begin{equation} \label{eqdebil}\int_\Omega [-Du \cdot D\varphi + b(x) |Du|\varphi + \beta(u) \mu |Du|^2 \varphi + c(x)u\varphi] = \int_\Omega f \varphi \quad \text{for any } \varphi \in C_0^{\infty}(\Omega) \end{equation}
Setting 
\[
\varphi := \phi\, \psi'(\psi^{-1}(v)) = \phi(g(v) + 1),
\]
with $\phi \in C_0^{\infty}(\Omega)$, we have \(\varphi \in H^1_0(\Omega)\), since both \(\psi\) and \(\psi^{-1}\) are \(C^{2}(\mathbb{R})\) functions, and \(v = \psi(u) \in H^1_0(\Omega)\) (since $u\in L^{\infty}(\Omega)$, thus we have that $\psi(u) \in L^{\infty}(\Omega)$ and consequently $Dv = \psi'(u) Du \in L^2(\Omega)$).
Hence, we can use $\varphi$ as a test function in \eqref{eqdebil}. By the definition of $\varphi$, as well as $Du = \frac{Dv}{\psi'(\psi^{-1}(v))}$, and $|Du|\varphi = |Dv| \phi$, we get
\small
\begin{equation}
\label{eqdebil2}
    \begin{aligned}
        \int_\Omega \left[\frac{-Dv \cdot D\big(\phi\, \psi'(\psi^{-1}(v)))}{\psi'(\psi^{-1}(v))} + b(x) |Dv|\phi + \beta(\psi^{-1}(v)) \mu \frac{|Dv|^2}{(\psi'(\psi^{-1}(v)))} \phi + c(x) \psi^{-1}(v)\phi (\psi'(\psi^{-1}(v)) \right]\\ = \int_\Omega f \phi \psi'(\psi^{-1}(v)).
    \end{aligned}
\end{equation}
\normalsize
Now, we observe that
\begin{equation*} \begin{aligned}
D\big(\phi\, \psi'(\psi^{-1}(v))) = 
 D\phi\,\psi'(\psi^{-1}(v)) + \phi \frac{\mu \beta(\psi^{-1}(v)) \psi'(\psi^{-1}(v))}{\psi'(\psi^{-1}(v))}Dv
= D\phi\,\psi'(\psi^{-1}(v))+ \phi \mu\beta(\psi^{-1}(v))Dv.
\end{aligned}\end{equation*}
By using the latter into \eqref{eqdebil2}, yields
\begin{equation}
    \int_\Omega \left[-Dv \cdot D\phi + b(x) |Dv|\phi + c(x) \psi^{-1}(v) (g(v)+1) \phi \right] = \int_\Omega f (g(v)+1) \phi,
\end{equation}
which implies that $v$ is a weak solution of \eqref{eqlap2}.

The other direction is proven similarly, by considering the test function $\varphi = \frac{\phi}{\psi'(u)}$. Concerning the properties of $g$, its evenness comes from the fact that $\psi'$ is an even function (since $\beta$ is odd) and $\psi^{-1}$ is odd. Hence $\psi'(\psi^{-1}(-v)) = \psi'(-\psi^{-1}(v)) = \psi'(\psi^{-1}(v)) $. Moreover, it is trivially nonnegative since $\psi'(\psi^{-1}(v)) \geq 1$, attaining equality only at $v=0$. Finally, computing $g'$ as in the proof of Proposition \ref{lemma:cv}, we see that
$g'(v) = \beta(\psi^{-1}(v)),$
from where it is easy to see that $g'<0 $ in $\mathbb{R}^-$, $g'> 0$ in $\mathbb{R}^+$ and that it is convex, since $\beta$ and $\psi^{-1}$ are nondecreasing.
\end{proof}

\section{Fully nonlinear theory}

\subsection{Proof of Theorem \ref{teo_existenciasolfuerte}}
 We will only show the result for  $\mathcal{M}^+_{\lambda,\Lambda}$, since for $\mathcal{M}^-_{\lambda,\Lambda}$ is analogous. 
    For simplicity, we take $\mu \equiv 1$. Note that, in each of the cases of \eqref{losR}, if $u \in W^{1,r}(\Omega)$, then $\beta(u)|Du|^m \in L^{p}(\Omega)$. Similarly, if $c \in L^q(\Omega)$, then $c(x) u \in L^{q} (\Omega) \subset L^p(\Omega)$. Moreover, the  embedding of $W^{2,p}(\Omega)$ into $W^{1,r}(\Omega)$ is compact. Thanks to {\cite[Proposition 2.4]{koike_existence_2009}}, we can define the mapping $T : W^{1,r}(\Omega) \rightarrow W^{2,p}(\Omega)$ such that for every $v \in W^{1,r}(\Omega)$, it maps it to the unique strong solution $u = Tv$ of 
    \begin{equation*}
        \begin{cases}
        \begin{aligned}
            \mathcal{M}^{+}_{\lambda,\Lambda}(D^2u) + b(x) |Du| 
            &=f(x) - \beta(v(x))|Dv(x)|^m - c(x)v(x) &&\text{ in } \Omega,\\
            u &= \psi &&\text{ on } \partial\Omega.
            \end{aligned}
        \end{cases}
    \end{equation*}
    \noindent
    It follows from {\cite[Proposition 2.4]{koike_existence_2009}} that
    \begin{equation}
    \label{331}
        \begin{aligned}
             \|Tv\|_\infty \leq& \|\psi\|_{L^{\infty}(\partial \Omega)} + C \left(\|f\|_{L^p(\Omega)} + \| \beta(v) |Dv|^m\|_{L^p(\Omega)} + \|cv\|_{L^p(\Omega)}\right)        \\
            \leq& \|\psi\|_{L^{\infty}(\partial \Omega)} + C \left(\|f\|_{L^p(\Omega)} + \beta(\|v\|_{C(\overline{\Omega})})\|Dv\|_{L^r(\Omega)}^m +  |\Omega|^{\frac{q-p}{pq}}\|c\|_{L^q(\Omega)} \|v\|_{C(\overline{\Omega})}\right) \end{aligned}
    \end{equation}
    \noindent
    and
    \begin{equation}
    \label{cotainteresante}
        \|Tv\|_{W^{2,p}(\Omega)} \leq \tilde{C} \left( \| \psi\|_{W^{2,p}(\Omega)} + \|f\|_{L^p(\Omega)} + \beta(\|v\|_{C(\overline{\Omega})})\|Dv\|_{L^r(\Omega)}^m + |\Omega|^{\frac{q-p}{pq}}\|c\|_{L^q(\Omega)} \|v\|_{C(\overline{\Omega})}\right).
    \end{equation}
\noindent
    We will use the Schauder Fixed Point Theorem to prove the existence of a strong solution of \eqref{EQTHEOSWEICH}. To do so, we have to prove that $T$ is continuous in $W^{1,r}(\Omega)$ and that for some $R>0$, $T(\mathcal{B}_{R})$ is a precompact subset of $\mathcal{B}_{R}$, where $\mathcal{B}_{R} := \{ v \in W^{1,r}(\Omega) \; : \; \|v\|_{W^{1,r}(\Omega)} \leq R \}$. \\

    \textit{Continuity of $T$:} Let $v_k \rightarrow v$ in $W^{1,r}(\Omega)$ as $k \rightarrow \infty$. We define, for every $k$, $u_k = Tv_k$. By \eqref{cotainteresante} we get
    $$\begin{aligned}
    \|Tv_k\|_{W^{2,p}(\Omega)} &\leq \tilde{C} \left( \| \psi\|_{W^{2,p}(\Omega)} + \|f\|_{L^p(\Omega)} + \beta(\|v\|_{C(\overline{\Omega})}) \|Dv_k\|_{L^r(\Omega)}^m +|\Omega|^{\frac{q-p}{pq}}\|c\|_{L^q(\Omega)} \|v_k\|_{C(\overline{\Omega})}\right)\\
    &\leq \tilde{C} \left( \| \psi\|_{W^{2,p}(\Omega)} + \|f\|_{L^p(\Omega)} + \beta(\|v_k \|_{C(\overline{\Omega})}) \|v_k\|_{W^{1,r}(\Omega)}^m + |\Omega|^{\frac{q-p}{pq}}\|c\|_{L^q(\Omega)} \|v_k\|_{C(\overline{\Omega})}\right) \\\
    &\leq \tilde{C} \left( \| \psi\|_{W^{2,p}(\Omega)} + \|f\|_{L^p(\Omega)} +  \beta(C_1\|v_k \|_{W^{1,r}(\Omega)}) \|v_k\|_{W^{1,r}(\Omega)}^m + C_1 |\Omega|^{\frac{q-p}{pq}}\|c\|_{L^q(\Omega)} \|v_k\|_{W^{1,r}(\Omega)}\right)\\
    &\leq K \quad \text{for some constant $K > 0$},
    \end{aligned}$$

    \noindent
    by using the fact that $|\beta(s)|\leq C_\beta |s|^k$ and that $W^{1,r}(\Omega)$ is continuously embedded into $C(\overline{\Omega})$ by Morrey's inequality (see, for instance, {\cite[Theorem 7.26(ii)]{gilbarg_elliptic_2001}}) since $r>n$. So, this implies that $Tv_k \rightharpoonup u$ in $W^{2,p}(\Omega)$ for some $u \in W^{2,p}(\Omega)$. Hence, by Rellich-Kondrachov theorem (e.g., {\cite[Theorem 6.3]{adams_sobolev_2003}}), $Tv_k \rightarrow u$ in $W^{1,r}(\Omega)$ since $W^{2,p}(\Omega) \subset\subset W^{1,r}(\Omega)$ for any of the three cases of $r$ in \eqref{losR}. 

Moreover, setting $g_k(x) = f(x) - \beta(v_k(x))|Dv_k(x)|^m - c(x) v_k(x)$ and $g(x) = f(x) - \beta(v(x))|Dv(x)|^m - c(x) v(x)$, we have
    $$
    \begin{aligned}
        \|g_k - g \|_{L^p(\Omega)} &\leq \| \beta(v_k) |Dv_k|^m - \beta(v)|Dv|^m + c\left(v_k - v \right))\|_{L^p(\Omega)} \\
        &\leq \| \beta(v_k) \left(|Dv_k|^m - |Dv|^m \right)  + |Dv|^m \left(\beta(v_k) -  \beta(v)\right) \|_{L^p(\Omega)} + \|c(v - v_k)\|_{L^p(\Omega)}  \\
        &\leq  \| \beta(v_k) \left(|Dv_k|^m - |Dv|^m \right) \|_{L^p(\Omega)} + \||Dv|^m \left(\beta(v_k) -  \beta(v) \right)\|_{L^p(\Omega)} + \|c(v - v_k)\|_{L^p(\Omega)} \\
        &=: I_1 + I_2 + I_3.
    \end{aligned}$$
  Now it remains to prove that $I_1 \rightarrow 0$, $I_2 \rightarrow 0$ and $I_3 \rightarrow 0$. For $I_1$, and for any of the three cases of $r$ in \eqref{losR}, the following holds
\[
\begin{aligned}
I_1 &\leq \| \beta(v_k) \big(|Dv_k| - |Dv|\big) \big(|Dv_k|^{m-1} + |Dv|^{m-1}\big) \|_{L^p(\Omega)} \\
&\leq C \, \|Dv_k - Dv\|_{L^r(\Omega)}
\longrightarrow 0 \quad \text{as } k \to \infty,
\end{aligned}
\]  
for some real constant $C>0$.

Then, for $I_2$ we get 
    $$
    \begin{aligned}
        I_2 &\leq \|Dv\|^m_{L^r(\Omega)} \|\beta(v_k) - \beta(v) \|_{L^r(\Omega)} 
        \longrightarrow 0 \quad \text{as $k \rightarrow \infty$}.
    \end{aligned}
    $$
    \noindent
    Finally, for $I_3$, for any $r$ as in \eqref{losR}, it holds
    $$ \begin{aligned}
        I_3 &\leq |\Omega|^{\frac{q-p}{pq}}\|c\|_{L^q(\Omega)} \|v_k - v\|_{L^r(\Omega)} \\
        &\leq |\Omega|^{\frac{q-p}{pq}}\|c\|_{L^q(\Omega)}\|c\|_{L^q(\Omega)} \|v_k - v\|_{W^{1,r}(\Omega)} 
        \longrightarrow 0 \quad \text{as $k \rightarrow \infty$}.
    \end{aligned}$$

    Therefore $\|g_k-g\|_{L^p(\Omega)} \rightarrow 0$ as $k$ goes to infinity, which implies by the maximum principle ({\cite[Proposition 2.8]{koike_maximum_2007}}) that $Tv_k \rightarrow Tv$ in $C(\overline{\Omega})$ and for uniqueness of the limit, we have $Tv=u$, thus concluding that $T$ is continuous in $W^{1,r}(\Omega)$.\\

    \textit{$T:\mathcal{B}_R \rightarrow \mathcal{B}_R$ for some $R >0$:} Let $\| v \|_{W^{1,r}(\Omega)} \leq R$. By the Sobolev embeddings and {\cite[Proposition 2.4]{koike_existence_2009}}, we have
    $$
    \begin{aligned}
        \|Tv\|_{W^{1,r}(\Omega)} &\leq D \|Tv \|_{W^{2,p}(\Omega)}\\ &\leq D \tilde{C} \left( \| \psi\|_{W^{2,p}(\Omega)} + \|f\|_{L^p(\Omega)} + \beta(\|v\|_{C(\overline{\Omega})})\|Dv\|_{L^r(\Omega)}^m + |\Omega|^{\frac{q-p}{pq}}\|c\|_{L^q(\Omega)} \|v\|_{C(\overline{\Omega})}\right) \\
        &\leq D \tilde{C} \left( \|f\|_{L^p(\Omega)} + \| \psi \|_{W^{2,p}(\Omega)} + \beta(\|v\|_{C(\overline{\Omega})}) R^m + C_1|\Omega|^{\frac{q-p}{pq}}\|c\|_{L^q(\Omega)} \|v\|_{W^{1,r}(\Omega)}\right)\\
        &\leq D \tilde{C} \left( \|f\|_{L^p(\Omega)} + \| \psi \|_{W^{2,p}(\Omega)} +  \beta(C_1\|v\|_{W^{1,r}(\Omega)} ) R^m + C_1|\Omega|^{\frac{q-p}{pq}}\|c\|_{L^q(\Omega)} R\right)\\
        &\leq D \tilde{C} \left( \|f\|_{L^p(\Omega)} + \| \psi \|_{W^{2,p}(\Omega)} +  \beta(C_1 R) R^m + C_1|\Omega|^{\frac{q-p}{pq}}\|c\|_{L^q(\Omega)} R\right) \\
        &\leq D \tilde{C} \left( \|f\|_{L^p(\Omega)} + \| \psi \|_{W^{2,p}(\Omega)} + C_{\beta}C_1^k R^{m+k}+ C_1 |\Omega|^{\frac{q-p}{pq}}\|c\|_{L^q(\Omega)} R\right).
    \end{aligned}
    $$
Set $R = \alpha(\|f\|_{L^p(\Omega)} + \| \psi \|_{W^{2,p}(\Omega)} )$, where $\alpha = 3D\tilde{C}$. Then, $\|Tv\|_{W^{1,r}(\Omega)}$
is controlled by
\small
\begin{equation*}\begin{aligned} 
\leq &\frac{\alpha \left( \|f\|_{L^p(\Omega)} + \|\psi\|_{W^{2,p}(\Omega)} \right) }{3} \left(1 + \alpha^{m+k}  C_\beta C_1^{k}\left( \|f\|_{L^p(\Omega)} + \|\psi\|_{W^{2,p}(\Omega)} \right)^{m+k-1} + \alpha C_1 |\Omega|^{\frac{q-p}{pq}}\|c\|_{L^q(\Omega)}\right).
\end{aligned}\end{equation*}
\normalsize
\noindent
Hence, if \eqref{pequeñezc} and \eqref{concluirteo} are satisfied for $\varepsilon_1\leq (3D \tilde{C})^{-(m +k)}$, then $T(\mathcal{B}_R) \subseteq \mathcal{B}_R$. \\

\noindent
On the other hand, \eqref{cotainteresante} ensures that $T(\mathcal{B}_R)$ is precompact in $W^{1,r}(\Omega)$. Therefore, by the Schauder Fixed Point Theorem, we conclude that $T: \mathcal{B}_R \rightarrow \mathcal{B}_R$ has a fixed point which is, as established before, a strong solution of \eqref{EQTHEOSWEICH}.
Finally, it is easy to see that estimates \eqref{331} and \eqref{cotainteresante}, together with \eqref{concluirteo}, yield the estimate \eqref{cotainteresante2} for some constant $\hat{C}$.
\qed

\begin{remark}
\label{remark31}
One can take $\varepsilon_1 = (3
\tilde{C}D)^{-(m+k)}> 0,$ where $\tilde{C}$ is the constant given in {\cite[Proposition 2.4]{koike_existence_2009}} and $D$ denotes the constant associated with the Sobolev embedding 
$
W^{2,p}(\Omega) \hookrightarrow W^{1,r}(\Omega).
$

\end{remark}

\subsection{ABP and comparison principle}
\label{sec:abp}

Here we present our ABP-type estimate for\begin{equation}
\label{MS}
\mathcal{M}^{\pm}_{\lambda,\Lambda}(D^2u) \pm b(x) |Du| \pm \mu(x) \beta(u) |Du|^m \pm c(x) u = f(x) \; \; \text{in } \Omega,
\end{equation} 
in light of the techniques used in {\cite[Theorem 3.3]{koike_existence_2009}}.

\begin{proposition}[ABP] 
\label{ABP}
Let $n < p \leq q_1$, and let one of \eqref{lol} hold. Let $\varepsilon_1$, and $\beta$ be from Theorem \ref{Teoremaclave} for $\Omega \subset \mathbb{R}^n$, and set $r$ as in \eqref{losR}. If $u\in C(\overline{\Omega})$ is an $L^p$-viscosity subsolution (resp., supersolution) of
    $$\mathcal{M}^+_{\lambda,\Lambda} (D^2u) + b(x) |Du| + \mu(x)\beta(u)|Du|^m + c(x)u\geq -f(x) \quad \text{in } \Omega^+ $$
    $$(\text{resp., } \mathcal{M}^{-}_{\lambda,\Lambda}(D^2u) - b(x) |Du| - \mu(x)\beta(u) |Du|^m + c(x) u\leq  f(x) \quad \text{in } \Omega^-$$
    \noindent
    and if $c \in L^p(\Omega)$ satisfy the smallness condition \eqref{pequeñezc}, and $b \in L^{q_1}_{+}(\Omega)$, $f \in L^p_{+}(\Omega)$ satisfy

\begin{equation}
   \|\mu\|_{\infty} \beta(\|u\|_{\infty}) (2 \|f\|_p)^{m-1} < \varepsilon_1
\end{equation}
    then there exists a constant $C = C(n,\lambda, \Lambda, p,q, q_1, m, \beta, \| b\|_{q_1}, \|c\|_{L^q(\Omega)}, \|u\|_{\infty})$ such that 
    \begin{equation}
        \max_{\overline{\Omega}} u \leq \max_{\partial \Omega} u + C \|f\|_{L^p( \Omega^{+})}
    \end{equation}
    \begin{equation}
        (\text{resp., } \max_{\overline{\Omega}}(-u) \leq \max_{\partial \Omega} (-u) + C \|f\|_{L^p (\Omega^{-})} ).
    \end{equation}
\end{proposition}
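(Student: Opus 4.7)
The argument adapts the strategy of \cite[Theorem 3.3]{koike_existence_2009} to our setting; the only novel element is absorbing the harmonic-map factor $\beta(u)$ into the leading superlinear coefficient, thereby reducing the equation to one already within the Koike-Święch framework. I sketch only the $\mathcal{M}^+$/subsolution case; the $\mathcal{M}^-$/supersolution case follows by replacing $u$ with $-u$ and invoking the oddness of $\beta$, together with the identity $\mathcal{M}^{+}(-X)=-\mathcal{M}^{-}(X)$.

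Since the statement presupposes an a priori $L^\infty$-bound on $u$, I first freeze the superlinear coefficient by setting $\tilde\mu(x) := \mu(x)\beta(u(x))$, so $\|\tilde\mu\|_\infty \leq \|\mu\|_\infty\beta(\|u\|_\infty)$. Then $u$ is an $L^p$-viscosity subsolution of
\[
\mathcal{M}^+_{\lambda,\Lambda}(D^2u) + b(x)|Du| + \tilde\mu(x)|Du|^m + c(x)u \geq -f \quad \text{in } \Omega^+,
\]
where the $u$-dependence of the superlinear coefficient has been eliminated. The smallness hypothesis $\|\mu\|_\infty\beta(\|u\|_\infty)(2\|f\|_p)^{m-1} < \varepsilon_1$ is then exactly the threshold required by Theorem \ref{teo_existenciasolfuerte} for the reduced problem, taken with the trivial choice $\beta \equiv 1$ and $k = 0$.

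Set $M_0 := \max_{\partial\Omega}u^+$. A preliminary translation $\tilde u := u - M_0$ yields $\tilde u \leq 0$ on $\partial\Omega$ and transforms $c(x)u$ into $c(x)\tilde u + c(x)M_0$, so the extra source $c(x)M_0$ is absorbed into the right-hand side, remaining controlled in $L^p$ thanks to the smallness condition \eqref{pequeñezc}. Next, apply Theorem \ref{teo_existenciasolfuerte} to construct a strong solution $v \in W^{2,p}(\Omega)$ of the frozen-coefficient equation with right-hand side $-f\chi_{\Omega^+} - c(x)M_0$ and zero boundary datum, satisfying $\|v\|_\infty \leq \hat{C}(\|f\|_{L^p(\Omega^+)} + M_0)$ via \eqref{cotainteresante2} and the Morrey embedding $W^{2,p}(\Omega)\hookrightarrow C(\overline{\Omega})$. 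Since $v$ is simultaneously a sub- and supersolution of the same equation of which $\tilde u$ is a subsolution, and $v = 0 \geq \tilde u$ on $\partial\Omega$, the comparison principle (Proposition \ref{Comparison}) delivers $\tilde u \leq v$ in $\overline\Omega$, and hence $\max_{\overline\Omega} u \leq M_0 + \|v\|_\infty \leq C(M_0 + \|f\|_{L^p(\Omega^+)})$, which is the claim.

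\textbf{Main obstacle.} The chief subtlety lies in the circular nature of the freezing step: the coefficient $\tilde\mu$ depends on the very quantity $\|u\|_\infty$ we aim to bound. The statement breaks this loop by embedding the a priori $L^\infty$-bound on $u$ directly inside the smallness threshold, so that once $\|u\|_\infty$ is declared admissible, the comparison with the existence-barrier proceeds unimpeded. A secondary bookkeeping concern is verifying that the smallness thresholds required by Proposition \ref{Comparison} for the frozen equation are consistent with those already in force; this is automatic since both are governed by the single constant $\varepsilon_1$ of Theorem \ref{teo_existenciasolfuerte}.
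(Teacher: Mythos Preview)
Your proposal has a genuine circular dependency: you invoke Proposition~\ref{Comparison} to compare $\tilde u$ with the barrier $v$, but in the paper Proposition~\ref{Comparison} is proved \emph{using} Proposition~\ref{ABP} (the very statement you are trying to establish). Specifically, the proof of the comparison principle ends by applying ABP with $f=0$ to the difference $v=u-\alpha$ in order to force $v\geq 0$. So you cannot appeal to it here.

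The paper bypasses comparison altogether. It constructs, via Theorem~\ref{teo_existenciasolfuerte}, a strong solution $v_\delta$ of an auxiliary problem with the \emph{opposite} extremal operator $\mathcal{M}^-$, right-hand side $f\chi_{\Omega^+}+\delta$, and a gradient term carrying an extra factor $2^{m-1}$. One then sets $w:=u+v_\delta$ and uses the subadditivity $\mathcal{M}^+(A+B)\geq \mathcal{M}^+(A)+\mathcal{M}^-(B)$ together with the elementary inequality $|Du|^m\leq 2^{m-1}(|Dw|^m+|Dv_\delta|^m)$ to show that $w$ is an $L^p$-viscosity subsolution of a \emph{strict} inequality ($\geq\delta>0$) in $\Omega^+$. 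The strict positivity forces $\max_{\Omega^+}w=\max_{\partial\Omega^+}w$ directly from the definition of viscosity subsolution, with no separate comparison principle required. The estimate on $\|v_\delta\|_\infty$ from \eqref{cotainteresante2} then yields the conclusion after letting $\delta\to 0$.

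Two further points your sketch does not address: first, the barrier must be built with $\mathcal{M}^-$ (not $\mathcal{M}^+$) so that the sum $w$ inherits a clean $\mathcal{M}^+$ inequality; second, the $2^{m-1}$ prefactor in the barrier equation is precisely what allows the superlinear gradient terms of $u$ and $v_\delta$ to be absorbed into a single $|Dw|^m$ term, and it is also why the smallness hypothesis reads $(2\|f\|_p)^{m-1}$ rather than $\|f\|_p^{m-1}$.
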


\begin{proof}
    The idea is to apply Theorem \ref{Teoremaclave}. To do so, it suffices to notice that, if $\delta > 0$ is such that $ \|\mu\|_{\infty} \beta (\|u\|_{\infty}) \; 2^{m-1}( \|f\|_p+\delta)^{m-1} < \varepsilon_1$, we find a strong solution $v_\delta \in W^{2,p}(\Omega)$ of
    \begin{equation}
    \label{Eq_vdelta}
        \begin{cases}
        \begin{aligned}
            \mathcal{M}^{-}_{\lambda,\Lambda}(D^2 v_\delta) - b(x) |Dv_\delta| - 2^{m-1} \mu(x)\chi_{\Omega^+}(x)\beta(u)|D v_\delta|^m +c(x) v_\delta &= f(x) \chi_{\Omega^+}(x) + \delta &&\text{in } \Omega, \\
            v_\delta &= 0 &&\text{on } \partial \Omega.
            \end{aligned}
        \end{cases}
    \end{equation}
    \noindent
    Since $\|f \chi_{\Omega^+}+\delta\|_p \leq \|f\|_{L^p(\Omega^{+})} + |\Omega|^{\frac{1}{p}} \delta \leq\|f\|_{L^p (\Omega^+)} + (\text{diam}(\Omega))^{\frac{n}{p}}\delta$, estimate (\ref{cotainteresante}) together with the embedding $W^{2,p}(\Omega) \hookrightarrow C(\overline{\Omega})$ gives
    \begin{equation}
    \label{cotaOP}
        \|v_{\delta}\|_{\infty} \leq \hat{C}(\|f\|_{L^p (\Omega^+)} + (\text{diam}(\Omega))^{\frac{n}{p}}\delta)
    \end{equation}
    for some $\hat{C} = \hat{C}(n, \lambda, \Lambda, p,q, q_1,m,k,\|b\|_{q_1}, \|c\|_{L^q(\Omega)}, \|u\|_{\infty}) > 0$. Setting $w:= u + v_\delta$, we add \eqref{Eq_vdelta} and the equation satisfied by $u$ restricted to $\Omega^+$ to get, formally,
    $$\begin{aligned}
    &\mathcal{M}^+_{\lambda,\Lambda}(D^2 w) + \tilde b(x)|Dw| + 2^{m-1} \|\mu\|_\infty A |Dw|^m + c(x) w\\
        & \ge\mathcal{M}^{+}_{\lambda,\Lambda}(D^2u) + \mathcal{M}^{-}_{\lambda, \Lambda}(D^2 v_\delta)+ {b(x) (|Du| - |Dv_\delta|)} +
        \mu(x)\beta(u)|Du|^m - \mu(x)2^{m-1}\beta(u)|Dv_\delta|^m \\ &+c(x)(u+v_\delta)\geq \delta  \quad \text{in } \Omega^+,
    \end{aligned}$$

  where $A = m\beta(\|u\|_{L^{\infty}(\Omega)})$ and $\tilde b =b+2^{m+1}A \|Dv_\delta\|_\infty^{m-1}$.  
    
    Now, even though $u$ is not a strong solution, we can make the argument rigorous by applying test functions to the equation satisfied by $u$ in order to get that $w$ is an $L^p$-viscosity solution of
    \begin{equation}
        \mathcal{M}^+_{\lambda,\Lambda}(D^2 w) + \tilde b(x)|Dw| + 2^{m-1} \|\mu\|_\infty A |Dw|^m + c(x) w \geq \delta \quad \text{in } \Omega^+.
    \end{equation}
    Therefore, by the definition of viscosity solution, we have
    $$\max_{\Omega^+} w = \max_{\partial\Omega^+} w,$$
    which, together with estimation (\ref{cotaOP}) gives
    
    \begin{equation*}
    \begin{aligned}
        \max_{\overline{\Omega}} u = \max_{\Omega^+} u \leq \max_{\partial\Omega^+} w &\leq \max_{\partial\Omega^+} u + \max_{\partial\Omega^+} v_{\delta}\leq \max_{\partial \Omega} u + \|v_\delta\|_{L^\infty (\Omega)}\\
        &\leq \max_{\partial \Omega} u + \hat{C}\left( \|f\|_{L^p (\Omega^+)} + (\text{diam}(\Omega))^{\frac{n}{p}}\delta \right),
    \end{aligned}
    \end{equation*}
    thus concluding the proof by letting $\delta \rightarrow 0$.
    \end{proof}
More generally, at the end of this section, let us consider operators $F:  \Omega \times \mathbb{R} \times \mathbb{R}^n \times \mathbb{S}_n \to \mathbb{R}$ that satisfy the following so-called \emph{harmonic map-like structure}:
\small
\begin{equation} 
\label{SCbeta}
\begin{cases}
\tag{$\text{SC}_\beta$} 
\begin{aligned}
    \mathcal{M}^-_{\lambda,\Lambda} (X - Y) - b(x) |p - q| - \mu |q|^2 \left|\beta(r) - \beta(s)\right| - \mu  \beta(r)(|p|+|q|) |p-q| - c(x) \omega \left((r - s)^+\right) \\
    \leq F(x, r, p, X) - F(x, s, q, Y) \\
    \leq \mathcal{M}^+_{\lambda,\Lambda}(X - Y) + b(x)|p - q| +   \mu |q|^2 \left|\beta(r) - \beta(s)\right|+  \mu \beta(r)(|p|+|q|) |p-q| + c(x)\omega((r - s)^+), 
\end{aligned}
\end{cases}
\end{equation}
\normalsize
where $\beta:\R \to \R$ is an odd, continuous, locally Lipschitz function that is nondecreasing and such that $\beta(s)s \geq 0$ for all $s \in \R$, and all the coefficients are unbounded.

    \begin{proposition}[Comparison Principle]
\label{Comparison}
Assume $F$ satisfies (\ref{SC}), $f \in L^p(\Omega)$, $\beta$ as in (\ref{Hbeta}) or (\ref{Hbetaprima}) with $|\beta(s)| \leq C_\beta |s|^k $ for some $k \in \mathbb{N}$ odd, $M \in L^{\infty}(\Omega)$  with $\mu_1 I \leq M(x) \leq \mu_2 I \text{ a.e. in } \Omega, \; 0 < \mu_1 \leq \mu_2  $, and $\Omega$ a bounded domain. If $u\in C(\overline \Omega)$ is an $L^p$-viscosity supersolution of 
\begin{equation}
\label{xdd}
\begin{cases}
\begin{aligned}
    -F[u] &= \beta(u) \langle M(x) Du,Du\rangle + f(x) \quad &&\text{in } \Omega \\
    \quad \; \; \,u  \; &= 0 \hspace{1cm} &&\text{on } \partial \Omega ,
    \end{aligned}
\end{cases}
\end{equation}
then, for any $\alpha \in C^1(\overline{\Omega}) \cap W^{2,p}_{loc}(\Omega)$ strong subsolution of \eqref{xdd}, there exists a $\delta_0 = \delta_0(\|\alpha\|_{C^1(\overline\Omega)},\beta)$ such that if $
    \mu_2 < \delta_0
$, then we have $\alpha \leq u \; \text{in }\Omega$.

Similarly, if $u\in C(\overline \Omega)$ is an $L^p$-viscosity subsolution of \eqref{xdd}, then for any $\gamma \in C^1(\overline{\Omega}) \cap W^{2,p}_{\text{loc}}(\Omega)$ strong supersolution of \eqref{xdd} there exists $\delta_1 = \delta_1(\|\gamma\|_{C^1(\overline \Omega)},\beta)$ such that, if the same smallness condition holds, we have $u \leq \gamma \; \text{in } \Omega.$
\end{proposition}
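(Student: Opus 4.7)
My plan is to argue by contradiction: if $\max_{\overline{\Omega}}(\alpha - u) > 0$, then by the boundary behavior of $\alpha$ and $u$ this maximum is attained at some interior point. The core idea is to apply the integro-exponential change of variables of Proposition \ref{lemma:cv} to both functions, setting $v := \psi(u)$ and $a := \psi(\alpha)$ with $\psi(s) = \int_0^s \exp\bigl(m \int_0^t \beta(\tau)\,d\tau\bigr)\,dt$ for a parameter $m > 0$ tuned to $\mu_2$ and the ellipticity of $F$. Since $\psi$ is strictly increasing, $a - v$ inherits the sign of $\alpha - u$, so I will obtain $\max(a - v) > 0$, and it will suffice to derive a contradiction at the level of $v$ and $a$.

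The next step is to show that $v$ is still an $L^p$-viscosity supersolution, and $a$ a strong subsolution, of a transformed equation $-\tilde F[w] = \tilde g(x, w)$ in which the harmonic-map-like nonlinearity $\beta(w)\langle M Dw, Dw\rangle$ no longer appears. For $a \in C^1(\overline{\Omega}) \cap W^{2,p}_{loc}(\Omega)$ this is a direct computation using the identity $D^2 a = m\beta(\alpha)\psi'(\alpha) D\alpha \otimes D\alpha + \psi'(\alpha) D^2\alpha$ already established in Proposition \ref{lemma:cv}. For $v$, I would follow the viscosity-test argument in that same proof: taking test functions of the form $\phi = \psi^{-1}(\varphi)$ with $\varphi \in W^{2,p}_{loc}$ and transporting the inequalities through the structure condition \eqref{SC}. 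Since \eqref{SC} is modelled on the Pucci operators, the algebraic manipulations that work there carry over, producing a new operator $\tilde F$ that again satisfies \eqref{SC}, together with an additional lower-order source carrying the factor $g(v) = \psi'(\psi^{-1}(v)) - 1$.

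In this reduced framework, where the quadratic gradient term has been removed, I would then apply a standard fully nonlinear comparison principle—for instance the Koike--Święch-type result used to prove Theorem \ref{teo_existenciasolfuerte} and Proposition \ref{ABP}—to conclude $a \leq v$ in $\Omega$. Undoing the monotone change of variables then yields $\alpha \leq u$, contradicting the assumption. The symmetric statement for a viscosity subsolution $u$ versus a strong supersolution $\gamma$ is obtained by the same procedure, with $\psi$ replaced by $\Psi$ and with the dual inequalities \eqref{ineq:lema312} from Proposition \ref{lemma:cv}.

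The main obstacle, and where the smallness of $\mu_2$ enters, lies in the fact that $F$ is not a Pucci operator and $M(x)$ is only known up to the two-sided bound $\mu_1 I \leq M \leq \mu_2 I$. The change of variables cancels the harmonic-map-like term only up to an anisotropy error of the form $\mu_2\beta(\alpha)|D\alpha|^2$ on the subsolution side, together with lower-order residuals involving $\beta(u)$ on the supersolution side; these must be absorbed into the first-order coefficient of $\tilde F$ while keeping it within the $L^q$, $q > n$, threshold required by the comparison principle. The threshold $\delta_0 = \delta_0(\|\alpha\|_{C^1(\overline{\Omega})}, \beta)$ is chosen precisely so that, using $|D\alpha| \leq \|\alpha\|_{C^1(\overline{\Omega})}$ and $\beta(\alpha) \leq \beta(\|\alpha\|_\infty)$, the residuals are dominated by the linear ellipticity of $\tilde F$ and the hypotheses of the standard comparison theorem are met.
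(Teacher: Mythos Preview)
Your plan to push both $u$ and $\alpha$ through the integro-exponential change $\psi$ and then invoke a ``standard'' comparison on the transformed functions runs into a genuine obstruction that the paper avoids entirely. After applying $\psi$, the right-hand side picks up the factor $g(v)+1 = \psi'(\psi^{-1}(v))$, so the transformed problem has the form $-\tilde F[w] = (g(w)+1)f(x) + c(x)(g(w)+1)\psi^{-1}(w)$: the source now depends on $w$ through the \emph{increasing} function $g$, which has the wrong monotonicity for a comparison principle unless $f$ has a sign. Moreover, Proposition~\ref{lemma:cv} is proved only for $\mathcal{M}^\pm$, where the positive $1$-homogeneity lets you factor out $\psi'(\alpha)$ from the Hessian; for a general $F$ obeying \eqref{SC} this fails, and you do not get a well-defined $\tilde F$ satisfying \eqref{SC}. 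Finally, the anisotropy error you mention is \emph{not} first order in $Dv$: it is $\beta(\alpha)\bigl(\langle M D\alpha,D\alpha\rangle - m\lambda_P|D\alpha|^2\bigr)$, a bounded function of $x$ only, so $v$ and $a$ end up solving genuinely different equations and there is nothing to compare.

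The paper's argument is both different and shorter: it never transforms $u$ or $\alpha$ separately. Instead it sets $v = u - \alpha$ and, on the open set $\{v<0\}$, tests the viscosity supersolution $u$ with $\alpha + \varphi$ (legitimate since $\alpha \in W^{2,p}_{\mathrm{loc}}$). Subtracting the strong inequality for $\alpha$ and using \eqref{SC} on the pair $(u,\alpha)$ leaves only
\[
\beta(u)\langle M D(\alpha+\varphi),D(\alpha+\varphi)\rangle - \beta(\alpha)\langle M D\alpha,D\alpha\rangle,
\]
which is expanded bilinearly into $(\beta(u)-\beta(\alpha))\langle M D\alpha,D\alpha\rangle + \beta(u)\bigl(2\langle M D\alpha,D\varphi\rangle + \langle M D\varphi,D\varphi\rangle\bigr)$. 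The first term, via the local Lipschitz bound on $\beta$, yields a zero-order coefficient $\tilde c(x) = C_\alpha\mu_2|D\alpha|^2$; the second gives a first-order drift $2\delta\mu_2|D\alpha|$ and a quadratic term $\tilde\mu|D\varphi|^2$. Thus $v$ is an $L^p$-viscosity supersolution of
\[
\mathcal{M}^-(D^2v) - \tilde b(x)|Dv| - \tilde\mu|Dv|^2 + \tilde c(x)v \le 0 \quad\text{in }\{v<0\},\qquad v\ge 0 \text{ on }\partial\{v<0\},
\]
and Proposition~\ref{ABP} with $f\equiv 0$ forces $v\ge 0$, a contradiction. The smallness $\mu_2<\delta_0(\|\alpha\|_{C^1},\beta)$ is exactly what makes $\|\tilde c\|_{L^q}$ meet the threshold \eqref{pequeñezc} of that ABP estimate. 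The change of variables is not used at all here; it is a tool for a~priori bounds on a single function, not for comparing two.
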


\begin{proof}
Set $v := u - \alpha$ in $\Omega$. Reasoning by contradiction, we assume $\min_{\overline{\Omega}} v = v(x_0) < 0$. As $v \geq 0$ on $\partial \Omega$, we get that $x_0 \in \Omega$. Set $\tilde{\Omega} := \{ x \in \Omega  \; | \; v(x) < 0 \}$, which is an open nonempty set since $x_0 \in \tilde{\Omega}$. Let $\varphi \in W^{2,p}_{\text{loc}}(\tilde{\Omega})$ and $\tilde{x} \in \tilde{\Omega}$ be such that $v - \varphi$ has a minimum at $\tilde{x}$. This is equivalent to saying $u - (\alpha + \varphi)$ has a minimum at $\tilde{x}$, and by the fact that $\alpha + \varphi \in W^{2,p}_{\text{loc}}(\tilde{\Omega})$ together with the definition of $u$ being an $L^p$-viscosity supersolution of (\ref{xdd}), we know that for every $\varepsilon >0$, there exists a certain radius $r>0$ such that, for a.e. $x \in B_r(\tilde{x}) \cap \tilde{\Omega}$,
$$-F(x,u,D(\alpha + \varphi), D^2(\alpha + \varphi)) - \beta(u) \langle M(x)D(\alpha +\varphi), D(\alpha + \varphi) \rangle - f(x) \geq -\varepsilon$$
\noindent
and $-F(x,\alpha,D\alpha,D^2\alpha) - \beta(\alpha) \langle M(x) D\alpha, D\alpha \rangle - f(x) \leq 0$ from the definition of $\alpha$ as a strong subsolution of (\ref{xdd}). Now, by subtracting both and using (\ref{SC}), we get in $\tilde\Omega$,
\begin{equation*}
    \begin{aligned}
        \varepsilon &\geq F(x,u,D(\alpha + \varphi), D^2(\alpha + \varphi)) - F(x,\alpha,D\alpha,D^2\alpha)  + \beta(u)\langle M(x) D(\alpha+\varphi),D(\alpha+\varphi)\rangle \\ &- \beta(\alpha) \langle M(x)D\alpha,D\alpha\rangle
        -c(x) \omega((u - \alpha)^+)
        \\
        &\geq \mathcal{M}^{-}_{\lambda,\Lambda} (D^2 \varphi) - b(x) |D \varphi| + \left(\beta(u) - \beta(\alpha) \right)\langle M(x)D\alpha, D\alpha \rangle + \beta(u) \left \{ 2 \langle M(x)D\alpha, D\varphi \rangle + \langle M(x)D\varphi, D\varphi \rangle\right \} \\
        &\geq \mathcal{M}^{-}_{\lambda,\Lambda} (D^2 \varphi) - b(x) |D \varphi|  + \left(\beta(u) - \beta(\alpha) \right)\langle M(x)D\alpha, D\alpha \rangle  - 2 \delta \mu_2 |D\alpha| |D \varphi| + \mu_1 \beta(u) |D \varphi|^2 \\
        &\geq \mathcal{M}^{-}_{\lambda,\Lambda} (D^2 \varphi) - b(x) |D \varphi| -C_\alpha\langle M(x)D\alpha, D\alpha \rangle |u-\alpha| - 2 \delta \mu_2 |D\alpha| |D \varphi| \\&+ \mu_1 (\beta(v+\alpha) - \beta(v)) |D \varphi|^2 + \mu_1 \beta(v) |D\varphi|^2 \\
        &\geq \mathcal{M}^{-}_{\lambda,\Lambda} (D^2 \varphi) - b(x) |D \varphi| + \mu_2 |D\alpha|^2 C_\alpha v - 2\delta \mu_2 |D\alpha| |D \varphi| - \mu_1 (\delta+C_\alpha |\alpha|) |D \varphi|^2 \\
    \end{aligned}
\end{equation*}
\normalsize
where $\delta = |\beta(2\|\alpha\|_{L^{\infty}(\Omega)}) |$ and $C_\alpha$ is a constant associated with the locally Lipschitz condition of $\beta$ in the set $B_{\alpha} :=\{ s \in \mathbb{R} \; | \; |s| \leq \|\alpha\|_{L^{\infty}(\Omega)}\} $. Then, defining $\tilde{b} = b + 2 \delta \mu_2 |D \alpha| \in L^p_+(\tilde{\Omega})$, and $\tilde{c}(x) = \mu_2 |D\alpha|^2 C_\alpha \in L^p_+ (\tilde{\Omega})$, we get that $v$ is an $L^p$-viscosity supersolution of
\begin{equation*}
    \begin{cases}
    \begin{aligned}
        \mathcal{M}^{-}_{\lambda,\Lambda} (D^2 v) - \tilde{b}(x) |Dv|- \tilde\mu |D v|^2 + \tilde{c}(x)  v &\leq 0\qquad &&\text{in } \tilde{\Omega} \\
        v &\geq 0 &&\text{on } \partial \tilde{\Omega} \subset \partial \Omega \cup \{ v = 0 \}
       \end{aligned}
    \end{cases}
\end{equation*}
where $\tilde\mu = \mu_1 (\delta+C_\alpha |\alpha|)$.
Thus, assuming that $\tilde{c}$ satisfies the smallness condition required cf. \eqref{pequeñezc} which in turn is ensured by the smallness assumption on $\mu_2$, using ABP (Proposition \ref{ABP}) with $f= 0$ gives us $v \geq 0$ in $\tilde{\Omega}$, contradicting the very definition of $\tilde{\Omega}$, hence concluding the proof.
\end{proof}

Finally, we state the following proposition, which shows that every strong solution of 
\(F = f\) is also an \(L^p\)-viscosity solution. As a consequence, Theorem 
\ref{teo_existenciasolfuerte} provides an initial existence result for 
\(L^p\)-viscosity solutions of the extremal problem \eqref{Modelo}.

    \begin{proposition}
    \label{prop:stronglp}
        Assume $F$ satisfies \eqref{SCbeta}, $n < p \leq q$, and $f \in L^p(\Omega)$. Then, if $u \in W_{loc}^{2,p}(\Omega)$ is a strong subsolution (resp. supersolution) of $F(x,u,Du,D^2u) = f(x)$ in $\Omega$, then it is an $L^p$-viscosity sub(super) solution of this equation.
    \end{proposition}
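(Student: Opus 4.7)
I focus on the subsolution case, the supersolution one being symmetric. Let $u \in W^{2,p}_{\mathrm{loc}}(\Omega)$ satisfy $F(\cdot,u,Du,D^2u) \leq f$ a.e.; Morrey's embedding (since $p>n$) already gives $u \in C^{1,\alpha}_{\mathrm{loc}}(\Omega)\subset C(\Omega)$, so $u$ is an admissible candidate for the $L^p$-viscosity notion and $|Du|$ is locally bounded.

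Arguing by contradiction, I would assume there exist a test function $\varphi \in W^{2,p}_{\mathrm{loc}}(\Omega)$, a local maximum $\hat{x}$ of $u-\varphi$, a ball $B_r(\hat x)\Subset \Omega$, and $\varepsilon>0$ with
\[
    F(x,u(x),D\varphi(x),D^2\varphi(x)) - f(x) \geq \varepsilon \quad \text{for a.e. } x\in B_r(\hat x).
\]
Subtracting the strong subsolution inequality $F(x,u,Du,D^2u)\leq f$ at the same $x$, and applying the structure condition \eqref{SCbeta} with the choice $r=s=u(x)$---which annihilates both the $\mu |q|^2 |\beta(r)-\beta(s)|$ term and the $c(x)\omega((r-s)^+)$ term---I would derive, for $w:=\varphi-u \in W^{2,p}_{\mathrm{loc}}$, the pointwise inequality
\[
    \mathcal{M}^+_{\lambda,\Lambda}(D^2 w) + \tilde b(x)\,|Dw| \geq \varepsilon \quad \text{a.e. in } B_r(\hat x),
\]
where $\tilde b:=b+\mu\,\beta(u)\,(|Du|+|D\varphi|)$ lies in $L^p_{\mathrm{loc}}$ thanks to the $C^{1,\alpha}$-regularity of $u$ and $\varphi$ (yielding locally bounded gradients) and to the local boundedness of $\beta(u)$.

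By construction $w$ attains a local minimum at $\hat x$, so $w-w(\hat x)\geq 0$ in $B_r(\hat x)$ with equality at $\hat x$. I would then invoke the strong maximum principle for Pucci operators with $L^p$-drift (in the spirit of \cite{koike_maximum_2007}), which forces $w$ to be constant in a neighborhood of $\hat x$; but then $D^2 w=0$ and $Dw=0$ a.e.\ there, contradicting $\varepsilon>0$. The supersolution case follows symmetrically, working with a local minimum of $u-\varphi$ and the corresponding lower bound in \eqref{SCbeta}.

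The argument is essentially routine; the conceptual step is the cancellation produced by evaluating \eqref{SCbeta} at $r=s=u(x)$, which eliminates the harmonic-map-like zero-order and modulus-of-continuity contributions and reduces the verification of the viscosity inequality to a standard Pucci inequality with drift. The only moderate technicality is ensuring $\tilde b\in L^p_{\mathrm{loc}}$, which is immediate from the $C^{1,\alpha}$-embedding of $W^{2,p}$ for $p>n$.
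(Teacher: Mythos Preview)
Your reduction via \eqref{SCbeta} at $r=s=u(x)$ is exactly the right move, and it is also the heart of the paper's proof. The gap is a sign error that breaks the concluding step. Under the ellipticity encoded in \eqref{SCbeta}, $F$ is nondecreasing in the Hessian argument, so a strong \emph{subsolution} of $F=f$ satisfies $F(\cdot,u,Du,D^2u)\geq f$ a.e.\ (not $\leq f$), and the failure of the $L^p$-viscosity subsolution condition at a local maximum $\hat x$ of $u-\varphi$ reads $F(\cdot,u,D\varphi,D^2\varphi)-f\leq -\varepsilon$ a.e.\ near $\hat x$ (not $\geq \varepsilon$). With your signs, the function $w=\varphi-u$ ends up satisfying $\mathcal{M}^+_{\lambda,\Lambda}(D^2w)+\tilde b\,|Dw|\geq \varepsilon$ while having an interior \emph{minimum}; this is a subsolution-type inequality, and the strong maximum principle says nothing about subsolutions at interior minima---no contradiction follows from that combination.

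With the signs corrected one instead finds that $v:=u-\varphi$ has an interior maximum on a ball and satisfies $\mathcal{M}^+_{\lambda,\Lambda}(D^2v)+\gamma\,|Dv|\geq \varepsilon$ a.e., with $\gamma:=b+\mu\,\beta(u)(|Du|+|D\varphi|)\in L^p_{\mathrm{loc}}$. From this point the paper does not invoke the SMP: it perturbs to $v_\delta(y)=v(y)-\delta|y-\hat x|^2$ to force a strict interior maximum and applies the ABP estimate (Proposition~\ref{ABP} with $\mu\equiv 0$), reaching a contradiction by shrinking the radius since $\|\gamma\|_{L^p(B_r)}\to 0$. Your SMP route can also be salvaged once the signs are right---shift to $z:=v(\hat x)-v\geq 0$, which then satisfies $\mathcal{M}^-_{\lambda,\Lambda}(D^2z)-\gamma\,|Dz|\leq -\varepsilon\leq 0$ with an interior zero, so the strong minimum principle forces $z\equiv 0$, contradicting $\varepsilon>0$---but this requires an SMP for strong supersolutions with $L^p$ drift; the paper's ABP argument is more self-contained within the tools already developed.
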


\begin{proof}
    We will only prove it for subsolutions, since the proof for supersolutions is analogous.
    Reasoning by contradiction, i.e., that $u$ is not an $L^p$-viscosity subsolution of $F = f $ in $\Omega$, there exists $\theta, r_0 >0$ and $x \in \Omega$, $\phi \in W^{2,p}(B_{r_0}(x))$ such that $0 = (u-\phi)(x) \geq (u-\phi)(y)$ for every $y \in B_{r_0}(x) \subset\subset \Omega$ and
    $$F(y,u(y),D\phi(y),D^2\phi(y)) - f(y) \leq - 2 \theta \quad \text{a.e. in } B_{r_0}(x).$$
    Moreover, since $u$ is a strong subsolution, we have that 
    $$F(y,u,Du,D^2u) \geq f(y) \quad \text{a.e. in }B_{r_0}(x),$$
    hence, combining the two, we get
    $$F(y,u(y),D\phi(y),D^2\phi(y)) - F(y,u(y),Du(y),D^2u(y)) \leq - 2 \theta. $$
    Now, using \eqref{SCbeta} and setting $v=u-\phi$,
    \begin{equation*}
    \begin{aligned}
       - 2 \theta \geq \mathcal{M}^-_{\lambda,\Lambda}(-D^2v(y)) - b(x) |Dv(y)| - \mu |Du(y)|^2 |\beta(u) - \beta(u)| - \mu \beta(u(y)) (|Du(y)| + |D\phi(y)|) |Dv(y)| 
    \end{aligned}
    \end{equation*}
    \normalsize
    i.e.,
    \begin{equation*}
        \mathcal{M}^+_{\lambda,\Lambda}(D^2v(y)) + b(x) |Dv(y)| + \mu \beta(u(y)) (|Du(y)| + |D\phi(y)|) |Dv(y)| \geq  2 \theta \quad \text{a.e. in } B_{r_0}(x).
    \end{equation*}

Now, setting $\gamma(y) = |\beta(u(y))| (|Du(y)|+|D\phi(y)|)$ and $v_\varepsilon(y) = v(y) - \varepsilon|y-x|^2$ for $\varepsilon >0$ as in \cite[Theorem 3.1]{koike_weak_2009}, we have that $v_\varepsilon$ achieves the strict maximum over $\overline{B_{r_0}}(x)$ at $x$ (since $v(y) \leq 0$) and that, for $0 < \varepsilon \leq \theta/(2n\Lambda)$ (where $\Lambda$ is the ellipticity constant of $\mathcal{M}_{\lambda,\Lambda}^{\pm}$) we have
$$\mathcal{M}^+_{\lambda,\Lambda}(D^2 v_\varepsilon) +\gamma(y) |Dv_\varepsilon| \geq - 2r \varepsilon \gamma(y) + \theta \quad \text{a.e. in } B_r(x), \; \text{for all } r \leq r_0.$$
By $u,\phi \in W^{2,p}(B_r) \subset W^{2,p}(B_{r_0}) $, we have that $\gamma \in L^p_+(B_r)$ with $n < p \leq q$. Therefore, we can apply ABP (Proposition \ref{ABP} with $\mu(x) \equiv  0$) to get
$$0 = \sup_{B_r(x)}v_{\varepsilon} \leq \sup_{\partial B_r(x)}v_\varepsilon + C \varepsilon r^{2-n/p} \|\gamma\|_{L^p(B_r(x))},$$
and since $v(y) \leq 0$ in $B_r(x)$, we finally get that $\sup_{\partial B_r(x)}v_\varepsilon \leq - \varepsilon r^2$, thus
$$0 = \sup_{B_r(x)}v_{\varepsilon} \leq -\varepsilon r^2+ C \varepsilon r^{2-n/p} \|\gamma\|_{L^p(B_r(x))}$$
which, by taking $r >0$ sufficiently small, leads to the desired contradiction.
\end{proof}

\subsection{Proof of Theorem \ref{Propimportante}}

Firstly, notice that if $u$ is solution of $($\ref{Plambda}$)$, then $-u$ and $0$ are $L^p$-viscosity subsolutions of 
    \begin{equation}
    \label{eqtilde}
    \begin{aligned}
        -\tilde{F}(x,U,DU,D^2U) &\leq  \lambda c(x) U +\mu_2 \tilde{\beta}(U) |DU|^2  + h^{-}(x), 
    \end{aligned}
    \end{equation}
     \noindent
     where $\tilde{F}(x,r,p,X) = - F(x,-r,-p,-X)$ and $\tilde{\beta}(x) = \max(0, \beta(x))$ is a function that satisfies \eqref{Hbetaprima}, see Remark \ref{remark:cv}. Indeed, on the one hand, if $U = 0$, then by \eqref{SC} we have that $$-\tilde{F}(x,0,0,0) = 0 \leq h^-(x) $$
    \noindent
    which is always true. On the other hand, if $U= -u$, then $$\tilde{F}(x,-u,-Du,-D^2 u) = -F(x,u,Du,D^2u) = \lambda c(x) u + \beta(u) \langle M(x) Du, Du\rangle + h^+(x) - h^-(x),  $$
    \noindent
    and rewriting this in terms of $U$ we obtain
    \begin{equation*}
    \begin{aligned}
        \tilde{F}(x,U,DU,D^2 U) 
        &\geq - \lambda c(x) U - \beta(U) \langle M(x) DU, DU \rangle - h^-(x)  \quad \text{(by the oddness of $\beta$) }\\
        &\geq - \lambda c(x) U - \tilde{\beta}(U)\mu_2 |DU|^2   - h^{-}(x), 
            \end{aligned}
    \end{equation*} 
    therefore, 
       $$-\tilde{F}(x,U,DU,D^2U) \leq \lambda c(x) U + \tilde{\beta}(U) \mu_2 |DU|^2 + h^-(x)$$
    \noindent
    i.e., $U=-u$ is subsolution of \eqref{eqtilde}. Next, using that $F$ satisfies \eqref{SC0}, $-u$ and $0$ are also $L^p$-viscosity subsolutions of
    \begin{equation}
    \label{EqOdd}
        \begin{cases}
        \begin{aligned}
            \mathcal{M}^+(D^2U) + b(x) |DU| + \mu_2\tilde{\beta}(U) |DU|^2 &\geq -\lambda c(x)U - h^-(x) \quad &&\text{in } \Omega\\
            \hspace{5cm}U &\leq 0 &&\text{on } \partial \Omega.
            \end{aligned}
        \end{cases}
  \end{equation}
     Then $U := u^- = \max \{0,-u \}$ is a subsolution of both problems \eqref{EqOdd}, \eqref{eqtilde}. Furthermore, $U\geq 0$ in $\Omega$ and $U = 0$ on $\partial \Omega $ since  $u|_{\partial \Omega}=0$. Next, we apply the change of variable $$w = \Psi(U) :=\int_{0}^{U} e^{-m \int_{0}^t \tilde{\beta}(s) ds} dt = \int_{0}^U e^{-\frac{m}{k+1} t^{k+1}}dt, \quad \text{with } m = \frac{\mu_2}{\lambda_P}$$
    \noindent
    and from Proposition \ref{lemma:cv}, we know that $w$ is an $L^p$-viscosity solution of 

    \begin{equation}
    \label{lolol1}
        \begin{cases}
            \begin{aligned}
                -\mathcal{L}^+_1[w] &\leq h^-(x) + \lambda c(x) (1-G(w) ) \Psi^{-1}(w) \quad &&\text{in } \Omega\\
                w &= 0 &&\text{on } \partial \Omega,
            \end{aligned}
        \end{cases}
    \end{equation}
    where $\mathcal{L}^+_{1} [w] = \mathcal{L}^+ [w] - h^-(x) G(w)$ and $G(w) = 1 -\Psi' (\Psi^{-1}(w)) = m \int_0^w \beta(\Psi^{-1}(t))dt $ is a positive nondecreasing function. Notice that $\mathcal{L}^+_1 $ is a coercive operator, since $G(w)$ is bounded by a positive constant. 
    
    Also, concerning $w = \Psi(v)$ we have
    \begin{equation*}
        \begin{aligned}
            0 \leq w &= \int_{0}^v e^{-\frac{m}{k+1} t^{k+1}}dt \\
            &\leq  \int_{0}^{\infty}e^{-\frac{m}{k+1} t^{k+1}}dt  \\
            &= \frac{1}{k+1} \left(\frac{k+1}{m}\right)^{\frac{1}{k+1}}\int_{0}^{\infty}e^{-z} z^{-\frac{k}{k+1}} dz  \\
            &= \Gamma\left( \frac{1}{k+1}\right) \frac{1}{k+1} \left(\frac{k+1}{m}\right)^{\frac{1}{k+1}} =: C_\beta,
        \end{aligned}
    \end{equation*}
    \noindent
    where in the third inequality we used the change of variable $z = \frac{m}{k+1}t^{k+1},$ and the definition of the Gamma function $\Gamma(s) = \int_0^{\infty} t^{s-1} e^{-t} dt$. Thus we have that \begin{equation} \label{cotaW1} 0 \leq w \leq C_\beta \quad \text{a.e. in } \Omega. \end{equation}
    \noindent
    Now, we set $w_1 = \int_0^{u_1^-} e^{-\frac{m}{k+1} t^{k+1}}dt$  where $u_1$ is a fixed $L^p$-viscosity supersolution of (\ref{Plambda}). Then, due to (\ref{cotaW1}), we have that $w_1 \in [0,C_\beta]$ and that $w_1$ is an $L^p$-viscosity solution of (\ref{lolol1}). Now, we define $\overline{w} = \sup_{v \in \mathcal{A}}v$, where
    $$\mathcal{A} := \left\{w : w \text{ is a $L^p$-viscosity solution of \eqref{lolol1}, } 0 \leq w \leq C_\beta \right\}.$$
    \noindent
    Clearly, $\mathcal{A} \not = \emptyset$ since $w_1 \in \mathcal{A}$ and $w_1 \leq \overline{w} \leq C_\beta$. Moreover, as a supremum of $L^p$-viscosity solutions (locally bounded), $\overline{w}$ is an $L^p$-viscosity solution of the first inequality of (\ref{lolol1}) (as a variant of Perron's method arguments in \cite{koike_perrons_2005}, see also \cite{nornberg_regularity_2021} for equations with superlinear gradient growth), and by construction, $\overline{w} = 0$ on $\partial \Omega$.\\

    \noindent
    As stated before $\mathcal{L}_1^{+}$ is a coercive operator, and the function
    $$f(x) := f_{\lambda}(x,\overline{w}(x)) = h^- (x) + \lambda c(x)\left(1 - G(\overline{w})\right)\Psi^{-1}(\overline{w}) \quad \in L^p_{+}(\Omega),$$
    \noindent
satisfies
    $$\|f^+\|_{L^p(\Omega)} \leq \|h^-\|_{L^p (\Omega)} + \Lambda_{2} \|c\|_{L^p (\Omega)}C_0,$$
    as long as we have  $(1-G(\overline{w})) \Psi^{-1} (\overline{w}) = \Psi'(\Psi^{-1}(\overline{w})) \Psi^{-1}(\overline{w}) \leq C_0$ for $\overline{w} \in [0, C_\beta)$. To verify the latter, we note that such a claim is equivalent to saying that $\Psi'(z) z \leq C_0$ for $z \in [0,+ \infty)$. Now, note that $$\Theta(z) := \Psi'(z) z = e^{-\frac{m}{k+1} z^{k+1}} z \gneqq 0 \; \; \text{in $[0,+\infty)$}$$
    is a continuous non-negative function in $[0, + \infty)$ with $\Theta(0) = 0$, and $\lim_{z \rightarrow +\infty} \Theta(z) = 0$, thus $\Theta(z)$ is bounded. Hence, we get the desired result $(1-G(\overline{w}))\Psi^{-1}(\overline{w}) \leq C_0$ a.e. in $\Omega$, concluding the claim.\\

    \noindent
    Therefore, by the proof of the boundary Lipschitz bound in \cite[Theorem 2.3]{sirakov_boundary_2018}},
    \begin{equation*} \overline{w}(x) \leq C \|f^+\|_{L^p(\Omega)} \text{dist}(x,\partial \Omega) \rightarrow 0 \quad \text{as } \; \; x \rightarrow \partial \Omega. \end{equation*}
    Thus $\overline{w} \not \equiv C_\beta$. Note that it may be equal to $C_\beta$ at some interior points.\\

    Now, arguing by contradiction, if there were a sequence of supersolutions $u_k$ of (\ref{Plambda}) in $\Omega$ with unbounded negative parts, then there would exist a subsequence such that
$$ u_k^{-}(x_k) = \|u_k^-\|_{L^\infty (\Omega)} \xrightarrow[k \to \infty]{} + \infty, \; \; x_k \in \overline{\Omega}, \; \; x_k \xrightarrow[k \to \infty]{} x_0 \in \overline{\Omega}$$
for some $x_0 \in \overline{\Omega}$ with $x_k \in \Omega$ for large $k$, since $u_k \geq 0$ on $\partial \Omega$. Then we define
$$0 \leq w_k (x_k) = \int_0^{u^-_k (x_k)} e^{-\frac{m}{k+1}t^{k+1}}dt \leq C_\beta.$$
and since $u_k(x_k) \xrightarrow[k\rightarrow \infty]{} \infty $, up to a subsequence and the change of variable $z = \frac{m}{k+1} t^{k+1}$ we obtain \begin{equation} \label{limite1}w_k(x_k) \xrightarrow[k\rightarrow \infty]{}C_\beta.\end{equation}

Moreover, by definition of $\overline{w}$ we have
$w_k \leq \overline{w} \leq C_\beta \text{ a.e. in } \Omega$,
hence, by (\ref{limite1}) we get that, for every $\varepsilon >0$, there exists some $k_0 \in \N$ such that 
$$C_\beta - \varepsilon \leq w_k(x_k) \leq  \overline{w}(x_k) \leq C_\beta, \quad \text{for all $k \geq k_0$}$$
thus $\lim_{k \rightarrow \infty} \overline{w}(x_k) = C_\beta$
and also
$$ \overline{w}(x_0) \geq \varliminf_{x_k \rightarrow x_0} \overline{w}(x_k) = \lim_{k \rightarrow \infty} \overline{w}(x_k) = C_\beta$$
which implies that $x_0 \in \Omega$, since we know by construction that $\overline{w} = 0$ on $\partial \Omega$, and so $\overline{w}(x_0) = C_\beta$. Now, applying the change of variables $z = 1 - \frac{1}{C_\beta}\overline{w}$ (note that $z \geq 0$, since $\overline{w} \leq C_\beta$), then $z$ is a solution of
\begin{equation*}
    \begin{cases}
        -\mathcal{L}_1^-[z] \geq  - \frac{1}{C_\beta}\lambda  c(x) \left( 1 - G\left(\{1-z\}C_\beta \right)  \right) \Psi^{-1}\left(\{1-z\}C_\beta \right) \quad \text{in } \Omega \\
        z \gneqq 0 \text{ in } \Omega, \quad z(x_0) = 0,
    \end{cases}
\end{equation*}
where $\mathcal{L}_1^- := \mathcal{L}^- - \frac{1}{C_\beta}h^-(x)(1- G(\{1-\cdot\}C_\beta) $ is a coercive operator, since 
\[
\frac{1}{C_\beta} \, h^-(x) \bigl(1 - G(\{1-\cdot\} C_\beta)\bigr) \ge 0,
\]
and $0 \le 1 - G(\cdot) \le 1.$

We now claim that this contradicts the following Vásquez-type strong maximum principle from \cite{sirakov_vazquez_2021}:

\begin{lemma}[{\cite[Theorem 1.1]{sirakov_vazquez_2021}}]
\label{SMPBoyan}
    Set $\mathcal{L}_1^- := \mathcal{M}^{-}(D^2u) - b(x) |Du| - c(x) u$ with $b \in L^q_+(\Omega)$, $c \in L^p_+(\Omega)$, $n < p \leq q$. Let $f \in C([0, +\infty)$ be a continuous function such that $f(0) = 0$ and that 
    $$\limsup_{s \to 0^+} \frac{f(s)}{s (\ln(s))^2} < + \infty .$$
Then, if $u$ is an $L^p$-viscosity supersolution of
    \begin{equation*}
    \begin{cases}
        \mathcal{L}^-_1[u] &\leq f(u) \quad \text{in } \Omega \\ 
        \qquad u &\geq 0 \quad \quad \,\;\text{in } \Omega
    \end{cases}\end{equation*}
    such that $ess \inf _B u = 0$ for some $B \subset \subset \Omega$, then $u \equiv 0$ in $\Omega$.
\end{lemma}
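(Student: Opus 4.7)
\emph{Approach.} I argue by contradiction via a Vázquez-type radial barrier, adapted to the fully nonlinear operator $\mathcal{L}_1^-$ with $L^p$-coefficients. Suppose $u\not\equiv 0$ while $\operatorname{ess\,inf}_B u=0$. Using the weak Harnack inequality available for $L^p$-viscosity supersolutions of $\mathcal{L}_1^-[u]\le f(u)$ (in the Caffarelli--Crandall--Kocan--Święch framework), the zero set of $u$ is closed, and by connectedness of $\Omega$ one can locate an open ball $B_R(x_0)\subset\{u>0\}$ whose boundary contains an essential zero $y_0$ of $u$. The plan is to construct a barrier $w$ in the annulus $A=B_R(x_0)\setminus\overline{B_{R/2}(x_0)}$ that is strictly positive inside, vanishes on $\partial B_R(x_0)$, satisfies $\mathcal{L}_1^-[w]\le f(w)$ in the strong sense, and lies below $u$ on $\partial B_{R/2}(x_0)$, so that a comparison principle forces $w\le u$ in $A$ and a Hopf-type inward derivative at $y_0$ contradicts $u(y_0)=0$.

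\emph{Barrier construction.} I seek $w$ radial, $w(r)=\varphi(R-r)$, reducing to a second-order singular ODE for $\varphi$ with $\varphi(0)=0$ and $\varphi>0$ on $(0,R/2]$. The classical Vázquez trick---multiplying by $\varphi'$ and substituting $\Phi(\varphi)=\int_0^\varphi ds/\sqrt{2F(s)+\varepsilon}$, where $F(s)=\int_0^s f(\tau)\,d\tau$---reduces this to a first-order problem whose solvability is equivalent to the integral divergence $\int_{0^+} ds/\sqrt{F(s)}=+\infty$. This divergence is precisely encoded by the hypothesis $\limsup_{s\to 0^+}f(s)/(s(\log s)^2)<+\infty$. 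One then rescales $w$ by a small positive constant so that $w\le u$ on $\partial B_{R/2}(x_0)$, which is compatible with the differential inequality because $f(0)=0$ and the inequality is (essentially) homogeneous at leading order near $0$.

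\emph{Conclusion and main obstacle.} Once the barrier is in hand, the standard $L^p$-ABP-based comparison principle for the linear $\mathcal{L}_1^-$ (in the spirit of Proposition~\ref{Comparison}, specialized to $\beta\equiv 0$) yields $w\le u$ in $A$. The inward derivative $-w'(R^-)>0$ then contradicts the attainment of the interior minimum $u(y_0)=0$. The principal obstacle is the barrier construction in the presence of the unbounded coefficients $b\in L^q_+(\Omega)$, $c\in L^p_+(\Omega)$ with $n<p\le q$: one cannot insert $b,c$ pointwise into the ODE, so they must be absorbed via integrated (radial Morrey/Kato-type) norms, exploiting $p,q>n$ for the necessary integrability. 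Matching the $\log^2$-growth of $f$ against both the coefficient contributions and the decay of $\varphi$ near $0$ is the delicate estimate; this is Vázquez's sharp matching, now carried out through ABP-type bounds for $L^p$-viscosity solutions rather than through pointwise ODE comparison.
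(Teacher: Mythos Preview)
The paper does not prove this lemma at all; it is quoted as \cite[Theorem~1.1]{sirakov_vazquez_2021} and used as a black box in the proof of Theorem~\ref{Propimportante}. So there is no ``paper's own proof'' to compare against.

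Your outline is the correct strategy---it is the classical V\'azquez barrier argument, and the cited reference does follow this route, with the substantial work being exactly the obstacle you name: carrying the radial ODE barrier through to the setting of Pucci operators with $L^p$ lower-order coefficients. Two points in your sketch deserve more care. First, the rescaling step (``rescale $w$ by a small positive constant\ldots the inequality is essentially homogeneous at leading order near~$0$'') is not quite right: the problem is genuinely inhomogeneous because of $f$, and multiplying $w$ by $\varepsilon$ turns $f(w)$ into $f(\varepsilon w)\neq \varepsilon f(w)$. The barrier has to be built with a free small parameter from the outset (or one uses the scaling freedom in the ODE for $\varphi$ itself), not rescaled afterwards. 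Second, in the comparison step you invoke ``the standard $L^p$-ABP-based comparison principle for the linear $\mathcal{L}_1^-$'', but the right-hand side $f(u)$ is nonlinear and $f(s)/s\sim(\log s)^2$ is \emph{unbounded} as $s\to 0^+$, so it cannot be absorbed into the zero-order coefficient of a proper operator on the set where $u$ is small. The actual argument compares $u$ against the strong barrier $w$ directly (both satisfying the same nonlinear inequality), using that $w\in C^2$ so one can test the viscosity supersolution property of $u$ with $w$ itself; no linear comparison is needed.
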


To prove the claim, it suffices to prove that the following holds:
    $$\lim_{s \to 0^+} \frac{\Psi'(\Psi^{-1}(\{1-s\}C_\beta))\Psi^{-1}(\{1-s\}C_\beta)}{s (\ln(s))^2} = 0,$$
    and that
    $$\lim_{s \to 0^+} \Psi'(\Psi^{-1}(\{1-s\}C_\beta))\Psi^{-1}(\{1-s\}C_\beta)=0 ,$$
    once this proves that $f(s) := \Psi'(\Psi^{-1}(\{1-s\}C_\beta))\Psi^{-1}(\{1-s\}C_\beta)$ satisfies the hypotheses of Lemma \ref{SMPBoyan}.

    Indeed, since $C_\beta = \lim_{t \to + \infty} \Psi(t)$, we have that 
    $$C_\beta - \Psi(t) = \int_t^{\infty} e^{-m\int_0^s\beta(z) dz}ds=\int_t^{\infty} e^{-m\frac{s^{k+1}}{k+1}}ds.$$
    Now, set $t_s = \Psi^{-1}(\{1-s\}C_\beta)$ (note that $t_s \to + \infty$ when $s \to 0)$. Evaluating in $t=t_s$ we obtain
    \begin{equation}
    \label{asintotico}
        \begin{aligned}
            C_\beta - \Psi(t_s) &= C_\beta s
            =\int_{t_s}^{\infty} e^{-m\frac{s^{k+1}}{k+1}}ds.
        \end{aligned}
    \end{equation}
    Now, by asymptotic estimation for integrals (see, for instance, {\cite[Section 6.3 ``Integration by parts'']{bender_asymptotic_1999}}), we get, after integrating by parts, that
\[
\int_{t_s}^{\infty} \left(-\frac{1}{m z^k }\right) \left(-m z^k e^{-m\tfrac{z^{k+1}}{k+1}}dz\right)=\frac{e^{-m\tfrac{t_s^{k+1}}{k+1}}}{mt_s^k}\,(1 + \mathcal{O}(t_s^{-(k+1)})) 
   \sim \frac{e^{\tfrac{-mt_s^{k+1}}{k+1}}}{mt_s^k},
\]
as $s \to 0^+$. This can be interpreted as the mass of the integral concentrating around $t_s$, since the integrand decays very rapidly. Therefore, combining this with \eqref{asintotico} we obtain
$$\Psi'(t_s) = e^{\frac{-mt_s^{k+1}}{k+1}} \sim C_\beta \,t_s^k \,s.$$
Now, multiplying by $t_s$ on both sides, we obtain
\begin{equation}
\label{oooo}
    \Psi'(t_s) t_s \sim C_\beta  \,t_s^{k+1} \, s,
\end{equation}
and taking logarithm,
$$-m\frac{t_s^{k+1}}{k+1} \sim \underbrace{k \ln(t_s)}_{o (t_s^{k+1})}- \underbrace{\ln (C_\beta)}_{\mathcal{O}(1)} - \ln(s), \quad \text{since } \lim_{t \to \infty} \frac{\ln (t)}{t^{k+1}} = 0, $$
which implies that $t_s^{k+1} \sim  \frac{(k+1)}{m} \ln(s)$. Next, using this together with \eqref{oooo} yields,
$$\frac{\Psi'(t_s) t_s}{s (\ln(s))^2} \sim \frac{C_\beta  \,t_s^{k+1} \, \cancel{s}}{\cancel{s} (\ln(s))^2} \sim \frac{(k+1)}{m} \frac{1}{\ln(s)}, \quad s \ll1. $$
Hence, for sufficiently small $s$ we have that
$$\frac{\Psi'(\Psi^{-1}(\{1-s\}C_\beta))\Psi^{-1}(\{1-s\}C_\beta)}{s (\ln(s))^2} \leq  M \frac{(k+1)}{m} \frac{1}{|\ln(s)|}, $$
for some $M>0$. Therefore, 
$$\lim_{s \to 0^+} \ \frac{\Psi'(\Psi^{-1}(\{1-s\}C_\beta))\Psi^{-1}(\{1-s\}C_\beta)}{s (\ln(s))^2} \leq \lim_{s \to 0^+}  M\frac{(k+1)}{m} \frac{1}{|\ln(s)|} = 0, $$
thus concluding the first part of the claim. Now, for the second limit, it suffices to see that
$$
    \lim_{s \to 0^+} \Psi'(\Psi^{-1}(\{1-s\}C_\beta))\Psi^{-1}(\{1-s\}C_\beta)
    = \lim_{z \to +\infty} e^{-m \frac{z^{k+1}}{k+1}} z
    = 0,
$$
hence, proving the second part of the claim and yielding the desired contradiction. \qed

\begin{remark}
    This claim implies that $f(s) = \Psi'(\Psi^{-1}(\{1-s\}C_\beta))\Psi^{-1}(\{1-s\} C_\beta)$ is not integrable at $0$, which justifies the use of a Vásquez-type SMP.
\end{remark}
\subsection{Proof of Theorem \ref{teoapriori2}}
We begin the proof by fixing $\Lambda_1, \Lambda_2$ such that $0 < \Lambda_1 < \Lambda_2$. \\

Reasoning by contradiction, we suppose that the solutions of (\ref{Plambda}) are not bounded from above in $[\Lambda_1, \Lambda_2]$, i.e., there exists a sequence $u_k$ of nonnegative $L^p$-viscosity solutions of (\ref{Plambda}) such that $$u_k^+(x_k) \xrightarrow{k \rightarrow\infty} + \infty, \; x_k \in \overline{\Omega}, \; x_k \rightarrow x_0 \in \overline{\Omega}$$
where $x_k = \text{arg}max_{x \in \overline{\Omega}} |u_k(x)|$, i.e., $\|u_k\|_{L^{\infty}(\Omega)} = |u_k(x_k)|$. Here, $\|u_k\|_{L^{\infty}(\Omega)} = u_k^+(x_k) + u_k^-(x_k)$, and we know that $u_k^- \equiv 0$.
\begin{claim}
\label{claimapriori}
    Up to changing the blow-up point $x_0$, we can suppose that there is a ball around $x_0$ in which $c$ is not identically zero.
\end{claim}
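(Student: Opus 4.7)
The plan is to argue by contradiction. Suppose that the original blow-up point $x_0$ admits a ball $B_{r_0}(x_0)\subset\Omega$ on which $c\equiv 0$ a.e.; otherwise the conclusion is already in force. The strategy is then to propagate the blow-up to a point on the boundary of the maximal zero-set component containing $x_0$, where the desired non-vanishing property of $c$ holds by construction.

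First, I would let $\Omega_0$ denote the connected component, containing $x_0$, of the essential interior of $G=\{c=0\}$. Inside $\Omega_0$ the term $\lambda c(x)u_k$ vanishes, so each $u_k$ is an $L^p$-viscosity solution of the same equation as $u_0$, namely $-F[u]=\beta(u)\langle M(x)Du,Du\rangle+h(x)$. The key step is to deduce, via the comparison principle (Proposition~\ref{Comparison}, applicable thanks to the standing assumption $\mu_2<\delta_0$), an interior bound of the form
\[
\sup_{\Omega_0} u_k \;\le\; \sup_{\partial\Omega_0} u_k \;+\; \|u_0\|_{L^{\infty}(\Omega)},
\]
possibly after first invoking the integro-exponential change of variables of Proposition~\ref{lemma:cv} to absorb the quadratic gradient nonlinearity into a lower-order term so that the Pucci extremal operators alone govern the comparison.

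Since $u_k(x_k)\to+\infty$ with $x_k\to x_0\in\Omega_0$, the estimate forces $\sup_{\partial\Omega_0}u_k\to+\infty$. For each $k$ I would pick $y_k\in\partial\Omega_0$ with $u_k(y_k)\to+\infty$ and extract a subsequence $y_k\to y_0\in\overline{\Omega_0}$. The $L^p$-viscosity boundary Lipschitz estimate of \cite{sirakov_boundary_2018}, combined with $u_k\equiv 0$ on $\partial\Omega$, rules out $y_0\in\partial\Omega$. Therefore $y_0\in\Omega\cap\partial\Omega_0$, and by maximality of $\Omega_0$ every ball $B_r(y_0)$ meets $\{c\neq 0\}$ in a set of positive measure. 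Replacing the blow-up data $(x_k,x_0)$ by $(y_k,y_0)$ then yields the claim.

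The main obstacle is the comparison step, because $\Omega_0$ is merely the essential interior of a sublevel set of an $L^p$ function and so is not in general a $C^{1,1}$ domain, which prevents a direct application of Proposition~\ref{Comparison} to $\Omega_0$. I would address this by exhausting $\Omega_0$ from the inside with smooth subdomains $\Omega_0^{(j)}\subset\subset\Omega_0$, applying Proposition~\ref{Comparison} (or, equivalently, the ABP bound of Proposition~\ref{ABP}) to the pair $(u_k,u_0)$ on each $\Omega_0^{(j)}$, and then letting $j\to\infty$ by the continuity of $u_k,u_0\in C(\overline{\Omega})$. A secondary subtlety is that one cannot shift $u_0$ by an additive constant while preserving the equation due to the $\beta(u)$ term, so the comparison should be applied after the transformation of Proposition~\ref{lemma:cv}, where constant shifts behave linearly, and then pulled back to the original variables.
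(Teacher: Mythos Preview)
Your overall architecture matches the paper's: work in the maximal zero-set $G$ of $c$, show by comparison with $u_0$ that the interior blow-up forces $\sup_{\partial G}u_k\to+\infty$, and relocate the blow-up point to $\partial G$, where any ball necessarily meets $\{c\gneqq 0\}$. The gap is in how you execute the comparison. You correctly flag that a constant shift of $u_0$ does not preserve the equation because of the $\beta(u)$ factor, but your proposed cure---pass to the transformed variable $v=\psi(u)$ of Proposition~\ref{lemma:cv}, ``where constant shifts behave linearly''---is not correct: the transformed equation reads $-\mathcal{L}^\pm[v]=g(v)f(x)+c(x)(g(v)+1)\psi^{-1}(v)+\dots$ and is genuinely nonlinear in $v$, so constant shifts do not behave linearly there either. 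The paper's resolution is simpler and avoids the change of variables altogether: under \eqref{SC0} the operator $F$ is independent of the zeroth-order argument, while $\beta$ is nondecreasing by \eqref{Hbeta}; hence $v_k:=u_k-\sup_{\partial G}u_k$ is a \emph{supersolution} of the same equation in $G$ (the $F$ part is unchanged, and $\beta(u_k)\langle M Du_k,Du_k\rangle\ge\beta(v_k)\langle M Dv_k,Dv_k\rangle$ since $u_k\ge v_k$ and $M\ge 0$), and $v_0:=u_0-\inf_{\partial G}u_0$ is a \emph{subsolution}. With $v_0\ge 0\ge v_k$ on $\partial G$, Proposition~\ref{Comparison} (whose smallness threshold depends only on $\|u_0\|_{C^1}$ and $\beta$, hence is covered by the standing hypothesis $\mu_2<\delta_0$) gives $v_k\le v_0$, i.e.\ $u_k\le \sup_{\partial G}u_k+2\|u_0\|_{L^\infty(\Omega)}$ in $G$.

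Two smaller points. First, Proposition~\ref{Comparison} does not require $\partial\Omega_0\in C^{1,1}$---it is stated for an arbitrary bounded domain---so your exhaustion-by-smooth-subdomains detour is unnecessary. Second, the paper does not attempt to rule out $y_0\in\partial\Omega$ via a boundary Lipschitz estimate (which would fail anyway, since the right-hand side involves $u_k$ and is not uniformly bounded); it simply allows the half-ball case if $y_0\in\partial\Omega$, after straightening the boundary, and the subsequent argument is written to accommodate this.
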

\begin{proof}
    Let $G$ be a maximal domain where $c \equiv 0$. Evidently, there is no need for such argument if $| \{ c=0 \}| = 0$ or even if $c \gneqq 0$ in a neighbourhood of $x_0$. Suppose, then, that $x_0$ is an interior point of $G$, and so $x_k \in G$ for large enough $k$ (up to considering a half ball in $G$ if $x_0 \in \partial \Omega$, after straightening the boundary via a diffeomorphism). Notice that both $u_k$ and $u_0$ satisfy the same equation \begin{equation}
    \label{auxiliar1}
        -F(x,u,Du,D^2u) = \beta(u) \langle M(x) Du, Du \rangle + h(x) \quad \text{in } G
    \end{equation}
    for each $k \in \mathbb{N}$. Moreover, due to (\ref{SC0}) and \eqref{Hbeta}, we have that $F(x,r,p,X) \leq  F(x,r-a,p,X)$ for any $a \in \mathbb{R}^+$. Therefore, $$v_k := u_k-\sup_{\partial G}u_k, \quad v_0 := u_0 - \inf_{\partial G} u_0$$ where $u_0$ is the solution of $(P_0)$ given by (\ref{H0}), are respectively, supersolution and subsolution of \eqref{auxiliar1}. Also, by construction, we have that $v_0 \geq 0 \geq v_k $ on $\partial G$, so $v_0$ and $v_k$ are respectively $L^p$-viscosity sub and supersolution of $(P_0)$ in $G$, with $v_0$ strong. Thus, we apply Proposition \ref{Comparison} to obtain that $v_0 \geq v_k$ in $G$ and also, noting that
    $$v_k \leq v_0 \iff u_k \leq \sup_{\partial G}u_k + 2\|u_0\|_{L^\infty(\Omega)} \quad \text{in } G$$
    we get
    \begin{equation*}
             \label{aux3}
         \sup_{\partial G } u_k \geq u_k - 2 \|u_0\|_{L^{\infty}(\Omega)} \quad \text{in } G.
    \end{equation*}

    Then, evaluating at $x_k \in G$ we have
     \begin{equation*}
        \sup_{\partial G} u_k \geq u_k(x_k) - 2\|u_0\|_{L^{\infty}(\Omega)}  \xrightarrow{k \rightarrow + \infty} + \infty \,,
         \label{aux2}
    \end{equation*}
    which means that we have a blow-up also at the boundary of G, i.e., that there exists a sequence $y_k \in \partial G$ with $u_k^+(y_k) \rightarrow + \infty$ and $y_k \rightarrow y_0 \in \partial G$, as $k$ goes to infinity. Next, since $G$ is maximal, we have that $\partial G \subset \partial \Omega \cup \partial ( \{c = 0\}$), but $u_k = 0$ on $\partial \Omega$ due to being solution of (\ref{Plambda}). Hence, we can take a ball $B_r(y_0)$ centered at $y_0$ (or once again, up to applying a diffeomorphism, a half ball if $y_0 \in \partial \Omega$) which, by enlarging $r$ if necessary, becomes a neighbourhood that intersects the set $\{c >0\}$. In other words, $B_r(y_0)$ is a set such that $c \gneqq 0$. Thus, up to changing $x_k$ and $x_0$ by $y_k$ and $y_0$, we can suppose that $c \gneqq 0$ in $B_r(x_0)$, or in a half ball if $x_0 \in \partial \Omega$, after the usual straightening of the boundary around $x_0$.
\end{proof}
Now, suppose that we are in the more difficult case, i.e., the one of the half ball. For simplicity, we say (up to rescaling) that $c \gneqq 0$ in $B^+_1 = B_1^+ (x_0)$, with our equation being defined in $B_2^+(x_0) \subset \Omega$.

From now on, we make the convention that, from line to line, the constant $C$ that we will be using may change and it depends on $n,p, \lambda_P, \Lambda_P, \Lambda_1,\Lambda_2, \mu_1, \|b\|_{L^p(\Omega)}, \|h\|_{L^p(\Omega)},$ and $\|c\|_{L^p(\Omega)}$. 

Note that \eqref{SC0} implies that for every $L^p$-viscosity solution $u$ of (\ref{Plambda}), the function $u$ is a nonnegative $L^p$-viscosity solution of 
\begin{equation*}
    \begin{aligned}
        \mathcal{M}^-(D^2 u) - b(x) |Du| &\leq F(x,u,Du,D^2u)
        \leq -\lambda c(x) u -\beta(u) \mu_1 |Du|^2 + h^{-}(x), 
    \end{aligned}
\end{equation*}
by using the fact that $\beta(u) \geq 0$ for $u \geq 0$. Therefore, $u$ is an $L^p$-viscosity supersolution of \begin{equation*}
     \mathcal{M}^-(D^2u) - b(x) |Du| \leq - \lambda c(x) u - \beta(u) \mu_1 |Du| + h^{-}(x).
 \end{equation*}
 
Then, by Proposition \ref{lemma:cv}, the function \begin{equation*}
    v_1 := \psi_1(u) = \int_0^u e^{m_1 \int_0^t \beta(s) ds}dt, \quad \text{where } m_1 = \frac{\mu_1}{\Lambda_P}
\end{equation*}
is a nonnegative $L^p$-viscosity supersolution of
\begin{equation}
\label{eq:1AP}
    \mathcal{L^-}[v_1] \leq f_1(x) \quad \text{in } B_2^+
\end{equation}
where $\mathcal{L}_1^-[v_1] := \mathcal{M}^-(D^2 v_1) - b(x) |Dv_1| - m_1 h^-(x) g(v_1)$, with $g(v_1) = \psi_1'(\psi^{-1}_1(v_1)) -1$, and $$f_1(x) := -\lambda c(x) (g_1(v_1) +1) \psi_1^{-1}(v_1) + h^-(x) \in L^p(\Omega)$$
since $v_1 \in L^\infty(\Omega)$ and $\psi_1$, $\psi_1'$, $\psi_1^{-1}$ are continuous functions.  Notice that, in the set $B^+_2 \cap \{f_1 \geq 0 \}$, we have
$ 0 \leq \lambda c(x) (g_1(v_1) + 1 ) \psi_1^{-1}(v_1) \leq h^-(x)$
and $f_1^+ = |f_1| \leq \big |\lambda c(x) (g_1(v_1) + 1 ) \psi_1^{-1}(v_1)\big|+ h^-(x) \leq 2 h^-(x)$, therefore
\begin{equation}
\label{añau}
    \|f_1^+\|_{L^p(B^+_2)} \leq 2 \|h^-\|_{L^p(B^+_2)} \leq C .
\end{equation}

Next, using {\cite[Theorem 1.1]{sirakov_boundary_2018}} (BQSMP, case $p=q > n$) applied to \eqref{eq:1AP}  
we obtain $c_0, C_0$ positive constants, and $\varepsilon \leq 1$, depending on $n, \lambda, \Lambda, \Lambda_P, p$ and $\|b\|_{L^p (\Omega)}$, such that
\begin{equation*} \begin{aligned} 
I &:= \inf_{B_1^+} \frac{v_1}{x_n} \geq c_0 \left ( \int_{B_{3/2}^+} (f_1^-)^{\varepsilon} \right)^{1/\varepsilon} - C_0 \|f^+_1\|_{L^p(B_2^+)} \\
&= c_0 \left( \int_{B^+_{3/2}} \left\{ \left(\lambda c(x)  (g_1(v_1) + 1) \psi_1^{-1}(v_1) - h^-(x) \right)^+ \right\}^\varepsilon \right)^{1/\varepsilon} - C \\
&\geq c_0  \left( \int_{B^+_{3/2}} \left\{ \left(\lambda c(x)  (c_1 v_1 )  \psi_1^{-1}(v_1) - h^-(x) \right)^+ \right\}^\varepsilon \right)^{1/\varepsilon} - C \quad \text{(since $g_1(s) \geq c_1s-1$})\\
&\geq c_0 \inf_{B_1^+}  \frac{v_1}{x_n} \left( \int_{B_1^+} \left\{ \left( \lambda c(x) c_1 \psi_1^{-1}(v_1) x_n - \frac{h^-(x)}{I}  x_n\right)^+ \right\}^\varepsilon \right)^{1/\varepsilon} - C. \\
\end{aligned} \end{equation*}
Next, using the fact that $v_1(x) \geq x_n I$ for all $x \in B_1^+$ we get that
\begin{equation*}
    \begin{aligned}
        I&\geq c_0 I \left( \int_{B_1^+} \left \{ \left( \lambda c(x) c_1 x_n \psi_1^{-1} (I x_n) -\frac{h^-(x)}{I} \right)^+\right\}^{\varepsilon}\right)^{1/\varepsilon}-C.\\ 
    \end{aligned}
\end{equation*}
Then, from the latter we deduce:
\begin{equation*}
    I  \left \{1 - c_0 \left ( \int_{B_1^+}  \left(\left( \lambda c_1c(x) x_n \psi_1^{-1}(I x_n) - \frac{h^-(x)}{I} \right)^+ \right)^{\varepsilon} \right)^{1/\varepsilon} \right \} \geq -C \end{equation*}
thus,
\begin{equation}
\label{desincomprobable}
    I  \left \{c_0 \left ( \int_{B_1^+} \left(\left( \lambda c_1 x_nc(x)  \psi_1^{-1}(I x_n) - \frac{h^-(x)}{I} \right)^+ \right)^{\varepsilon} \right)^{1/\varepsilon} - 1\right \} \leq \tilde{C}, \end{equation}
which we claim that is possible only if $I \leq C$, with a constant $C$ that does not depend on $v_1$, nor on $\lambda \in [\Lambda_1, \Lambda_2]$. Indeed, if this were not the case, we can obtain a sequence of supersolutions $v_1^k$ of $$\mathcal{L}_1^-[v_1^k] \leq f_1^k(x) \quad \text{in } B_2^+$$
such that $I_k:= \inf_{B_1^+} \frac{v_1^k}{x_n} \rightarrow + \infty$ when $k$ goes to infinity and (\ref{desincomprobable}) holding with $I$ replaced by $I_k$. Hence, up to a subsequence (and renumbering if necessary), we can assume that $I_k \geq k^2$ and $\frac{\tilde{C}}{k^2}\leq 1$ for all $k \geq k_0$. From here, it follows that

$$\int_{B_1^+} \left( \left(\lambda c_1c(x) x_n  \psi_1^{-1} (I_k x_n) - \frac{h^-(x)}{I_k} \right)^+\right)^{\varepsilon} \leq c_0^{- \varepsilon}  \left(1 + \frac{\tilde{C}}{k^2} \right )^{\varepsilon} \leq C,$$
then, using that $\lambda \geq \Lambda_1$ and that $I_k \geq k^2$, we get

\begin{equation}
\label{cincosiete}
    \int_{B_1^+ \cap \{ x_n \geq 1/k\}}  \left ( \left( \Lambda_1 c_1c(x) x_n- \frac{h^-(x)}{k^2\psi_1^{-1}(k)} \right)^+ \right)^{\varepsilon} \leq \frac{C}{k^2\,\psi_1^{-1}(k)}.
\end{equation}
Since $\psi_1^{-1}$ is increasing, passing the limits as $k \rightarrow + \infty$ we get $$\int_{B_1^+} (x_n c_1\Lambda_1 c(x))^{\varepsilon} dx \leq 0 \quad \Rightarrow \quad \int_{B_1^+} (x_n c(x))^{\varepsilon} dx = 0,$$
since $\Lambda_1 > 0$. This contradicts that $c(x) \gneqq 0$ in $B_1^+$. In order to justify the limit in \eqref{cincosiete}, we can use the dominated convergence theorem to conclude this. Note that, for $\varepsilon = 1$ we can use the theorem directly, due to (\ref{añau}). On the other hand, for $\varepsilon \in (0,1)$ we use Young's inequality to estimate
\begin{equation*}
\begin{aligned}
\left(\left(\lambda c_1c(x) - \frac{h^-(x)}{k^2\psi_1^{-1}(k)}\right)^+\right)^{\varepsilon} \leq \frac{\left(\lambda c_1 c(x) - \frac{h^-(x)}{k^2\psi_1^{-1}(k)} \right)^+}{\frac{1}{\varepsilon}} + \frac{1}{\frac{1}{1-\varepsilon}} &= \varepsilon \left(\lambda c_1c(x) - \frac{h^-(x)}{k^2\psi_1^{-1}(k)} \right)^+ + 1 - \varepsilon  \\
&\leq \left(\lambda c_1c(x) - \frac{h^-(x)}{k^2\psi_1^{-1}(k)} \right)^+ + 1,\end{aligned}\end{equation*}
ensuring the desired convergence, and leading to the aforementioned contradiction. So, we have proven the claim $I \leq C$.

Therefore, by {\cite[Theorem 1.2]{sirakov_boundary_2018}} (BWHI) applied to \eqref{eq:1AP}, we get that there exists other positive constants $\varepsilon$, $c_0$, $C_0$ depending on $n, \lambda_p, \Lambda_p, p$ and $\|b\|_{L^p(\Omega)}$ such that

\begin{equation}
\label{BWHI1}
\left(\int_{B^+_{3/2}}v_1^{\varepsilon}\right)^{1/\varepsilon} \leq c_0 \left ( \int_{B_{3/2}^+} \left(\frac{v_1}{x_n} \right)^{\varepsilon} \right)^{1/\varepsilon} \leq \inf_{B_1^+} \frac{v_1}{x_n} + C_0 \|f_1^+\|_{L^p(B_2^+)} \leq C.
\end{equation}

Now, going back to $u$, by \eqref{Plambda} and \eqref{SC0} we get that $u $ is an $L^p$-viscosity subsolution of \begin{equation*} \begin{aligned} \mathcal{M}^+(D^2 u) + b(x)|Du| \geq F(x,u,Du,D^2u)& \geq -\lambda c(x) u -\beta(u) \mu_2 |Du|^2 - h^+(x). 
\end{aligned} \end{equation*}
Next, we know from Proposition \ref{lemma:cv} that $v_2:= \psi_2(u) = \int_0^u e^{m_2 \int_0^t \beta(s)ds}dt, \; \; \text{with } m_2= \frac{\mu_2}{\lambda_P}$ is an $L^p$-viscosity subsolution of 
\begin{equation}
    \mathcal{M}^+(Dv_2) +b(x) |Dv_2| + \lambda c(x) (1 + g_2(v_2)) \psi_2^{-1}(v_2)  \geq -h^+ (x) (1 + g_2 (v_2)) 
\end{equation}
which, by rearranging the terms, is equivalent to 
\begin{equation}
    \mathcal{M}^+(Dv_2) +b(x) |Dv_2| + \nu(x) v_2 \geq - h^+(x)
\end{equation}
where $\nu(x) = \lambda c(x) \frac{\psi_2'(\psi^{-1}_2(v_2))}{v_2} \psi_2^{-1}(v_2) + h^+(x) \frac{\psi_2'(\psi^{-1}_2(v_2))-1}{v_2} $. Hence, $v_2$ is an $L^p$-viscosity solution of
\begin{equation}
    \label{eqv2}
    \begin{cases}
    \begin{aligned}
        
    \mathcal{M}^+(Dv_2) +b(x) |Dv_2| + \nu(x) v_2 &\geq - h^+(x) \quad \text{in } B_2^+,\\
    v_2 &= 0 \quad \quad \quad \;\; \;\text{in } B_2^0.
    
       \end{aligned}
    \end{cases}
\end{equation}

\begin{claim}
\label{claimclave1}
    For any $s >0$, there exists a constant $C_s$ such that \begin{equation}
    \label{desig.original}
        \lambda c(x) \frac{\psi_2'(\psi^{-1}_2(v_2))}{v_2} \psi_2^{-1}(v_2) + h^+(x) \frac{\psi_2'(\psi^{-1}_2(v_2))-1}{v_2} \leq C_s \left(c(x) + h^+(x) \right)   (1 + v_2^s) 
    \end{equation}
\end{claim}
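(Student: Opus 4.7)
The plan is to analyze the two coefficients in \eqref{desig.original} as functions of $t := \psi_2^{-1}(v_2)$ and bound them by powers of $v_2 = \psi_2(t)$. Set
\[
A(t) := \frac{\psi_2'(t)\,t}{\psi_2(t)}, \qquad B(t) := \frac{\psi_2'(t)-1}{\psi_2(t)},
\]
so that the LHS reads $\lambda c(x)\,A(t) + h^+(x)\,B(t)$. It then suffices to show that for every $s>0$ there is a constant with $\max\{A(t),B(t)\} \leq C_s(1+v_2^s)$, after which the conclusion follows by using $\lambda \leq \Lambda_2$ and collecting constants.

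The starting point is the identity $\psi_2''(s) = m_2 \beta(s)\psi_2'(s) = m_2 s^k \psi_2'(s)$. Integrating and using the monotonicity of $\psi_2'$,
\[
\psi_2'(t) - 1 \;=\; \int_0^t m_2 s^k \psi_2'(s)\,ds \;\leq\; m_2 t^k \int_0^t \psi_2'(s)\,ds \;=\; m_2 t^k \psi_2(t).
\]
Dividing by $\psi_2(t)$ immediately gives $B(t) \leq m_2 t^k$; multiplying first by $t$ and using the trivial bound $\psi_2(t) \geq t$ coming from $\psi_2' \geq 1$ yields $A(t) \leq 1 + m_2 t^{k+1}$. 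So the claim reduces to proving $t^{k+1} \leq C_s(1+v_2^s)$ for each fixed $s > 0$.

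For this, I would estimate $v_2$ from below by keeping only the top half of the defining integral and using that the integrand is increasing:
\[
v_2 \;\geq\; \int_{t/2}^{t} e^{\frac{m_2}{k+1}\tau^{k+1}}\,d\tau \;\geq\; \frac{t}{2}\,\exp\!\left(\frac{m_2}{(k+1)\,2^{k+1}}\,t^{k+1}\right).
\]
Hence, for each fixed $s>0$,
\[
\frac{t^{k+1}}{v_2^s} \;\leq\; 2^s\, t^{\,k+1-s}\,\exp\!\left(-\frac{s\,m_2}{(k+1)\,2^{k+1}}\,t^{k+1}\right) \xrightarrow[t\to\infty]{}\; 0,
\]
so $t^{k+1}/v_2^s$ is bounded on some tail $[T,+\infty)$; being also bounded on $[0,T]$, we get $t^{k+1}\leq C_s(1+v_2^s)$ as desired. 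Plugging back gives $A(t),B(t) \leq C_s(1+v_2^s)$, and finally $\lambda c(x) A(t) + h^+(x) B(t) \leq C_s(\Lambda_2 c(x)+h^+(x))(1+v_2^s) \leq C_s'(c(x)+h^+(x))(1+v_2^s)$.

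The main subtlety to flag is that the constant $C_s$ does blow up as $s \downarrow 0$, which is harmless since $s$ is fixed in the statement; the scheme works for every individual $s>0$ precisely because the at most polynomial growth in $t$ of $A$ and $B$ is dominated by the exponential growth of $v_2$ as a function of $t$. No finer asymptotic than the one-sided lower bound on $\psi_2(t)$ above is needed—the two-sided asymptotic $\psi_2(t) \sim e^{\frac{m_2}{k+1}t^{k+1}}/(m_2 t^k)$ one gets from the integration-by-parts argument of Theorem~\ref{Propimportante} is not required here.
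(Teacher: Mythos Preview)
Your argument is correct and in fact cleaner than the paper's. The paper also introduces $a(v_2)=A(t)$ and $b(v_2)=B(t)$, but then computes the precise asymptotics: it uses the integration-by-parts expansion $\psi_2(w)\sim \psi_2'(w)/(m_2 w^k)$ to obtain $A(t)\sim m_2 t^{k+1}$, $B(t)\sim m_2 t^{k}$, and then $t^{k+1}\sim (k{+}1)m_2^{-1}\ln v_2$, so that ultimately $A,B\leq C\ln(1+v_2)\leq C_s(1+v_2^s)$. Your approach replaces this two-sided asymptotic analysis by the one-line integral inequality $\psi_2'(t)-1=\int_0^t m_2 s^k\psi_2'(s)\,ds\leq m_2 t^k\psi_2(t)$, which already gives the polynomial-in-$t$ bounds $B\leq m_2 t^k$, $A\leq 1+m_2 t^{k+1}$, and then a crude exponential lower bound for $\psi_2(t)$ to kill the power of $t$. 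The paper's route yields sharper information (logarithmic growth of $A,B$ in $v_2$), which is nowhere used; your route is more elementary and fully sufficient for the claim.

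One phrasing nit: ``being also bounded on $[0,T]$'' should refer to $t^{k+1}$, not to the ratio $t^{k+1}/v_2^s$, since the latter can blow up near $t=0$ when $s>k+1$ (recall $v_2\sim t$ there). The conclusion $t^{k+1}\leq C_s(1+v_2^s)$ is unaffected, since on $[0,T]$ one simply uses $t^{k+1}\leq T^{k+1}\leq T^{k+1}(1+v_2^s)$.
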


\begin{proof} Define the auxiliary functions
\[
a(v_2) := \frac{\psi_2'(\psi_2^{-1}(v_2))}{v_2} \psi_2^{-1}(v_2), \quad \text{and} \quad b(v_2) := \frac{\psi_2'(\psi_2^{-1}(v_2)) - 1}{v_2}.
\]
We aim to show that both \( a(v_2) \) and \( b(v_2) \) grow at most polynomially in \( v_2 \), so that inequality \eqref{desig.original} holds. Let us study the behavior of $a,b$ as $v_2 \rightarrow 0$ and $v_2 \rightarrow + \infty$:
\medskip

\noindent\textbf{Asymptotic behavior as \( v_2 \to 0 \):} Let \( w := \psi_2^{-1}(v_2) \), so that \( w \to 0 \) as \( v_2 \to 0 \). Then, using L'hôpital rule is easy to see that
    $$  
    \lim _{v_2 \rightarrow 0} a(v_2) =  \lim _{w \rightarrow 0} \frac{\psi'_2(w) w}{\psi_2(w)} 
     = 1.$$
     Similarly,
     $$ 
         \lim_{v_2 \rightarrow 0} b(v_2) = \lim _{w \rightarrow 0}\frac{\psi_2'(w)-1}{\psi_2(w)}  
          = m_2 \beta(0)=0.
    $$
     Hence, both \( a(v_2) \) and \( b(v_2) \) remain bounded as \( v_2 \to 0 \). 
     \medskip

\noindent\textbf{Asymptotic behavior as \( v_2 \to \infty \):} Let us recall that
\[
\psi_2(w) = \int_0^w e^{m_2 \int_0^t \beta(s)\, ds} dt= \int_0^w \frac{1}{m_2\beta(t)} \left( m_2\beta(t) e^{m_2 \int_0^t \beta(s) ds}\right)dt .
\]
Then, using standard asymptotic estimates for integrals of exponential type again, after integrating by parts, we conclude
for large enough $w$ that,
\begin{equation}
\label{estimacionv2}
\psi_2(w) \sim \frac{e^{m_2 \int_0^w \beta(s) ds}}{m_2 \beta(w)} = \frac{e^{m_2 \frac{w^{k+1}}{k+1}}}{m_2 w^k}=\frac{\psi_2'(w)}{m_2 w^k}.
\end{equation}
Therefore,
\begin{equation}
\label{estrellita}
a(v_2) = \frac{\psi_2'(w) w}{\psi_2(w)} \sim m_2 w^{k+1}, \qquad b(v_2)=\frac{\psi_2'(w) - 1}{\psi_2(w)} \sim m_2 w^k.
\end{equation}
Now, taking logarithm in~\eqref{estimacionv2} and recalling that \( v_2 = \psi_2(w) \), we find
\[
\begin{aligned}
\ln v_2 &\sim \frac{m_2 w^{k+1}}{k+1} - \ln(m_2 w^k) 
= \frac{m_2 w^{k+1}}{k+1} + o(w^{k+1}) \;\;\textrm{ for large $w$},
\end{aligned}
\]
which implies
\begin{equation}
\label{estrellita2}
w = \psi_2^{-1}(v_2) \sim \left( \ln v_2 \right)^{\frac{1}{k+1}}.
\end{equation}
Combining \eqref{estrellita} and \eqref{estrellita2} into our estimates gives
\[
a(v_2) \sim m_2 \ln v_2, \qquad b(v_2) \sim m_2 (\ln v_2)^{\frac{k}{k+1}} \leq m_2 \ln v_2.
\]
Whence, there exist constants \( C_1, C_2 > 0 \) such that for \( v_2 \gg 1 \),
\[
a(v_2) \leq C_1 \ln(1 + v_2), \qquad b(v_2) \leq C_2 \ln(1 + v_2).
\]

Finally, since for every \( s > 0 \) there exists a constant \( C_s > 0 \) such that
$
\ln(1 + v_2) \leq C_s (1 + v_2^s),
$
we conclude that \( a(v_2), b(v_2) \leq \mathcal{O}(1 + v_2^s) \) as \( v_2 \to \infty \). To extend this to all \( v_2 \geq 0 \), let \( M > 0 \) be such that the asymptotics above hold for all \( v_2 \geq M \), and define
\[
C_{1,s} := \max\left\{ \sup_{v_2 \in [0, M]} a(v_2), \, C_1 C_s \right\}, \quad
C_{2,s} := \max\left\{ \sup_{v_2 \in [0, M]} b(v_2), \, C_2 C_s \right\}.
\]
Then for all \( v_2 \geq 0 \), we have
\[
a(v_2) \leq C_{1,s} (1 + v_2^s), \qquad b(v_2) \leq C_{2,s} (1 + v_2^s),
\]
which implies inequality~\eqref{desig.original}.
\end{proof}

Next, from the definition of \( v_2 \), we have
\[
v_2 = \int_0^{\psi_1^{-1}(v_1)} \left( \psi_1'(t) \right)^{\frac{m_2}{m_1}} dt.
\]
Since \( \psi_1' \) is increasing and \( \frac{m_2}{m_1} > 1 \), we estimate
\begin{align*}
    v_2 &\leq \psi_1^{-1}(v_1) \left( \psi_1'(\psi_1^{-1}(v_1)) \right)^{\frac{m_2}{m_1}} = \psi_1^{-1}(v_1) \left( 1 + g_1(v_1) \right)^{\frac{m_2}{m_1}} \\
    &= \psi_1^{-1}(v_1) \left( 1 + m_1 \int_0^{v_1} \left( \psi_1^{-1}(t) \right)^k dt \right)^{\frac{m_2}{m_1}} \quad \text{(by \eqref{gintegral})} \\
    &\leq \psi_1^{-1}(v_1) \left( 1 + m_1 v_1 \left( \psi_1^{-1}(v_1) \right)^k \right)^{\frac{m_2}{m_1}} = \left( \left( \psi_1^{-1}(v_1) \right)^{\frac{m_1}{m_2}} + m_1 v_1 \left( \psi_1^{-1}(v_1) \right)^{k + \frac{m_1}{m_2}} \right)^{\frac{m_2}{m_1}}.
\end{align*}

Now, using the same asymptotic estimates as in Claim~\ref{claimclave1}, we have
\[
\psi_1^{-1}(v_1) \sim \left( \ln v_1 \right)^{\frac{1}{k+1}} \quad \text{as } v_1 \to \infty.
\]
Hence, for large \( v_1 \),
\[
v_2 \leq C\left(  (\ln(1 + v_1))^{\frac{1}{k+1} \cdot \frac{m_1}{m_2}} +  m_1 v_1 (\ln(1 + v_1))^{ \frac{k}{k+1} +\frac{m_1}{m_2} \cdot \frac{1}{k+1}} \right)^{\frac{m_2}{m_1}},
\]
and since the second term dominates asymptotically, we conclude
\[
v_2 \leq \mathcal{O}\left( \left( v_1 \ln(1 + v_1) \right)^{\frac{m_2}{m_1}} \right).
\]

As a consequence, for every \( z > 1 \), there exists a constant \( C_z > 0 \) such that
$
v_2^{\frac{m_1}{m_2}} \leq C_z(1 + v_1^z).
$
We use this to bound \( \|\nu\|_{L^{p_1}(B_2^+)} \) with $p_1 = \frac{p+n}{2}$ and $\frac{1}{p_1} = \frac{1}{p}+\frac{1}{p_2}$. First,
\begin{align*}
\|\nu\|_{L^{p_1}(B_2^+)} &\leq \|c + h^+\|_{L^p(\Omega)} \| C_s(1 + v_2^s) \|_{L^{p_2}(B_2^+)} \\
&\leq C_s \|c + h^+\|_{L^p(\Omega)} + C_s \|c + h^+\|_{L^p(\Omega)} \|v_2^s\|_{L^{p_2}(B_2^+)}.
\end{align*}

Using the previous bound on \( v_2 \), we obtain
\[
\|v_2^s\|_{L^{p_2}(B_2^+)} \leq \|(C_z(1 + v_1^z))^{\frac{m_2}{m_1}s}\|_{L^{p_2}(B_2^+)}.
\]

Now, choosing
$
s = \frac{\varepsilon m_1}{m_2 z p_2},
$
for any \( z > 1 \), we get
\begin{align}
\label{refsz}
\left\| (C_z(1 + v_1^z))^{\frac{m_2}{m_1}s} \right\|_{L^{p_2}(B_2^+)} 
= \left( \int_{B_2^+} \left( C_z(1 + v_1^z) \right)^{\varepsilon / z} \right)^{1/p_2} 
\leq C_\varepsilon \left( C + \int_{B_2^+} v_1^\varepsilon \right)^{1/p_2} \leq C.
\end{align}

Therefore,
\[
\|\nu\|_{L^{p_1}(B_2^+)} \leq C_{n,p} (\|c\|_{L^p(\Omega)} + \|h^+\|_{L^p(\Omega)}) + C.
\]

Having obtained this uniform bound on \( \nu \), we can now apply {\cite[Theorem 2.1]{saller_nornberg_methods_2018}} (BLMP, in the case \( p = q > n \)) to equation~\eqref{eqv2} for \( v_2 \), using \( r = \frac{\varepsilon m_1}{2 m_2} \) as in~\eqref{refsz} (with \( z = 2 \)), to deduce
\[
v_2^+ = v_2 \leq C \left\{ \left( \int_{B^+_{3/2}} v_2^r \right)^{1/r} + \|h^+\|_{L^p(\Omega)} \right\} \leq C,
\]
i.e.,\ $u = \psi^{-1}_2(v_2) $ is uniformly bounded in $B_1^+$, for every $L^p$-viscosity solution of $(P_\lambda)$, $\lambda \in [\Lambda_1,\Lambda_2]$. \qed
\section{Multiplicity results}
This section is devoted to the derivation of \emph{a priori} estimates and multiplicity results for the one-parameter quasilinear problem \eqref{Plambdaprima} (as defined in the introduction) driven by the Laplacian operator.

For this problem we establish new \emph{a priori} estimates, thus extending the results obtained in Theorem \ref{teoapriori2}. Subsequently, we conduct a multiplicity analysis of solutions, generalizing the results obtained in \cite{de_coster_multiplicity_2017} for natural growth in the gradient, but adapted to a harmonic map-type structure like ours.

We recall problem $(P_0^\prime)$ given by \begin{equation}\label{P0prima}\tag{$P_0'$}\begin{cases} \begin{aligned}    -\Delta u - b(x) |Du| &=   \mu \beta(u) |Du|^2 + h (x)  &&\text{in } \Omega,\\   \hspace{2.7cm}u &= 0 &&\text{on } \partial \Omega.   \end{aligned}   \end{cases}\end{equation}

\subsection{Preliminaries}
Before going to the proof of Theorem \ref{multpositivas}, we must reckon some preliminary results and definitions needed over the section, in light of \cite{de_coster_priori_2019}. First, let us consider a class of equations which entails \eqref{Plambdaprima}:
\begin{equation}
\label{caratheodory}
\left \{
\begin{aligned}
-\Delta u &= f(x,u,\nabla u), \quad &&\text{in } \Omega, \\
u &= 0, \qquad \quad &&\text{on } \partial\Omega,
\end{aligned}
\right.
\end{equation}
where $f$ is an $L^p$-Carathéodory function with $p>n$, whose solutions are in $W^{2,p}_0(\Omega)$. 

    It is easily verifiable that, if \eqref{H} or \eqref{HS} holds, the function $f(x,u,Du) := h(x) + c(x) u + b(x)|Du| + \mu \beta(u) |Du|^2 $
    is an $L^p$-Carathéodory function.

We now introduce a strong ordering relation on $C(\overline{\Omega})$ that will be needed for some arguments using degree theory. In order to do so, we define $\varphi_1>0$ as the first eigenfunction of
\begin{equation*}
    -\Delta \varphi_1 = \gamma_1 c(x) \varphi_1, \quad \varphi_1 \in H_0^1(\Omega).
\end{equation*}
With this, we can define the ordering relation appearing in Theorem \ref{multpositivas}:

\begin{definition}[{\cite[Definition 1.2]{de_coster_multiplicity_2017}}]
    Let \( u \), \( v \in C(\overline{\Omega}) \). We say that:
\begin{equation*}
 \text{\( u \ll v \) in case there exists \( \varepsilon > 0 \) such that, for all \( x \in \overline{\Omega} \), \( v(x) - u(x) \geq \varepsilon \varphi_{1}(x) \).}
 \end{equation*}
\end{definition}

Here, \( u, v \in C^{1}(\overline{\Omega}) \),  \( u \ll v \) is equivalent to: for all \( x \in \Omega \), \( u(x) < v(x) \) and, for \( x \in \partial\Omega \), either \( u(x) < v(x) \) or \( u(x) = v(x) \) and \( \frac{\partial u}{\partial \nu}(x) > \frac{\partial v}{\partial \nu}(x) \), where $\nu$ is the outward unit normal.
Also, the set $\{w \in C_0^1(\overline\Omega) \, |\, u \ll w \ll v \} $ is open in $C_0^1(\overline\Omega):= \{ w \in C^1(\overline{\Omega}) \, \colon \, w|_{\partial \Omega} = 0 \}$.

Next, we define a suitable notion of sub and super solutions for problem \eqref{caratheodory}.

\begin{definition}[Regular sub and supersolutions {\cite[Definition 2.1]{de_coster_multiplicity_2017}}]
We will call \emph{regular subsolution} (respectively a \emph{regular supersolution}) of \eqref{caratheodory} as a strong subsolution of the equation in \eqref{caratheodory} satisfying the inequality $\le $ (resp.\ $\ge $) on the boundary, that is,   \( \alpha \) (resp. \( \gamma \)) in \( W^{2,p}(\Omega) \) such that
\[
\left\{
\begin{aligned}
-\Delta\alpha(x) &\leq f(x,\alpha(x),\nabla\alpha(x)) \,\text{a.e. } x \in \Omega, \\
\alpha(x) &\leq 0   \;\text{ for all } x \in \partial\Omega.
\end{aligned}
\right.\;
\left( \textrm{resp. }
\left \{
\begin{aligned}
-\Delta\gamma(x) &\geq f(x,\gamma(x),D\gamma(x)) \,\text{a.e. } x \in \Omega, \\
\quad \; \;\gamma(x) &\geq 0 \;  \text{ for all } x \in \partial\Omega.
\end{aligned}
\right.\right)
\]
\end{definition}

\begin{definition}[Sub and supersolutions, {\cite[Definition 2.2]{de_coster_multiplicity_2017}}]
\label{subsuper}
    We define as a \emph{subsolution} \( \alpha \) of \eqref{caratheodory} a function of the form \( \alpha := \max\{\alpha_{i} \mid 1 \leq i \leq k\} \) where \( \alpha_{1},\ldots,\alpha_{k} \) are regular subsolutions of \eqref{caratheodory}. Similarly, a \emph{supersolution}, \( \beta \) of \eqref{caratheodory} is a function of the form \( \beta = \min\{\beta_{j} \mid 1 \leq j \leq l\} \) where \( \beta_{1},\ldots,\beta_{l} \) are regular supersolutions of \eqref{caratheodory}.
    \end{definition}

Notice that problem \eqref{caratheodory} can be studied as a fixed point problem, since on one hand, the operator
\begin{equation} \label{L}\mathcal{L}: W^{2,p}_0(\Omega) \to L^p(\Omega) \,; \,u \mapsto f(\cdot,u(\cdot), Du(\cdot))\end{equation}
is a linear homeomorphism, and on the other hand, since $f$ is an $L^p$-Carathéodory function, the operator
$$\mathcal{N} : C_0^1(\overline{\Omega}) \to L^p(\Omega) \, ;\, u \mapsto f(\cdot, u(\cdot), Du(\cdot))$$
is well-defined, continuous, and maps bounded sets into bounded sets. Moreover, since $p>n$, $W^{2,p}(\Omega) \subset \subset C_0^1(\overline{\Omega})$ and therefore, the operator $\mathcal{M}: C_0^1(\overline{\Omega}) \to C_0^1(\overline{\Omega}) $ defined by
$\mathcal{M}(u) = \mathcal{L}^{-1} \mathcal{N} u$
is completely continuous. Hence, finding a solution of \eqref{caratheodory} is equivalent to finding $u$ so that
$u = \mathcal{M} u.$

As we are interested in the dependence on the $\lambda$ parameter, we will be referring to $\mathcal{M}_\lambda$ as the operator related to \eqref{Plambdaprima}, and to $\mathcal{M}_0$ as the one related to \eqref{P0prima}.

\begin{definition}[Strict sub/super solutions, {\cite[Definition 2.3]{de_coster_multiplicity_2017}}]
A subsolution $\alpha$ of \eqref{caratheodory} is said to be \emph{strict} if every solution $u$ of \eqref{caratheodory} with $\alpha \leq u$ on $\Omega$ is such that $\alpha \ll u$. Similarly, a \emph{strict} supersolution $\gamma$ of \eqref{caratheodory} is a supersolution such that every solution $u$ with $u \leq \gamma$ satisfies $u \ll \gamma$.

\end{definition}

With these definitions, we recall the following theorem from \cite{de_coster_multiplicity_2017}, which will be instrumental in proving the existence and characterizations of solutions of \eqref{caratheodory} through a sub/super solutions approach.

\begin{theorem}[{\cite[Theorem 2.1]{de_coster_multiplicity_2017}}]
\label{thm:21coster}
Let \(\Omega\) be a bounded domain in \(\mathbb{R}^{n}\) with boundary \(\partial\Omega\) of class \(C^{1,1}\) and \(f\) be an \(L^{p}\)-Carathéodory function with \(p > n\). Assume that there exists a subsolution \(\alpha\) and an supersolution \(\gamma\) of \eqref{caratheodory} such that \(\alpha \leq \gamma\). Denote \(\alpha := \max\{\alpha_{i} \mid 1 \leq i \leq k\}\) where \(\alpha_{1},\ldots,\alpha_{k}\) are regular subsolutions of \eqref{caratheodory} and \(\gamma = \min\{\gamma_{j} \mid 1 \leq j \leq l\}\) where \(\gamma_{1},\ldots,\gamma_{l}\) are regular supersolutions of \eqref{caratheodory}. 

If there exists \(K > 0\) and \(h \in L^{p}(\Omega)\) such that for a.e. \(x \in \Omega\), all \(u \in [\min\{\alpha_{i} \mid 1 \leq i \leq k\}, \max\{\gamma_{j} \mid 1 \leq j \leq l\}]\) and all \(\xi \in \mathbb{R}^{n}\),
$
|f(x,u,\xi)| \leq h(x) + K|\xi|^{2}, 
$
then problem \eqref{caratheodory} has at least one solution \(u\) satisfying
$
\alpha \leq u \leq \gamma.
$

Moreover, problem \eqref{caratheodory} has a minimal solution \(u_{\min}\) and a maximal solution \(u_{\max}\) in the sense that, \(u_{\min}\) and \(u_{\max}\) are solutions of \eqref{caratheodory} with \(\alpha \leq u_{\min} \leq u_{\max} \leq \gamma\) and every solution \(u\) of \eqref{caratheodory} with \(\alpha \leq u \leq \gamma\) satisfies \(u_{\min} \leq u \leq u_{\max}\).

If, in addition, \(\alpha\) and \(\gamma\) are strict and satisfy \(\alpha \ll \gamma\), then there exists \(R > 0\) such that
$
\deg(I - \mathcal{M}, \mathcal{S}) = 1,
$
where
$
\mathcal{S} = \{u \in C^{1}_{0}(\overline{\Omega}) \mid \alpha \ll u \ll \gamma,\ \|u\|_{C^{1}} < R\}.
$
\end{theorem}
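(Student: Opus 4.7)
The plan is to combine three ingredients: (i) a truncation of $f$ in the $u$-variable so as to control the range of prospective solutions to $[\underline\alpha,\overline\gamma]$, where $\underline\alpha:=\min_i\alpha_i$ and $\overline\gamma:=\max_j\gamma_j$; (ii) an integro-exponential change of variables in the spirit of Corollary \ref{cor:cv} to absorb the quadratic gradient term, so that the associated solution operator is completely continuous on $C^1_0(\overline\Omega)$; and (iii) a comparison/ABP argument to confine the produced solution to $[\alpha,\gamma]$ and, in the strict case, to compute the Leray--Schauder degree.

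First, I define the truncated Carathéodory nonlinearity $\tilde f(x,u,\xi):=f(x,T(u),\xi)$, where $T(u):=\max\{\underline\alpha(x),\min\{\overline\gamma(x),u\}\}$; by the quadratic growth hypothesis one has $|\tilde f(x,u,\xi)|\le\tilde h(x)+K|\xi|^2$ with $\tilde h\in L^p(\Omega)$. Choosing $m\ge K$ and applying the change of variables $v=\int_0^u e^{m\int_0^t\beta(s)ds}dt$ with $\beta$ chosen so that $m\beta\ge K$, the equation $-\Delta u=\tilde f(x,u,\nabla u)$ is transformed into an equation for $v$ whose right-hand side has only sublinear gradient growth. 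Standard $W^{2,p}$ elliptic regularity plus the ABP estimate then yield a uniform bound $\|v\|_{W^{2,p}}\le C$, and going back to $u$ one gets a uniform $C^1$-bound. Hence the fixed-point operator $\widetilde{\mathcal M}:C^1_0(\overline\Omega)\to C^1_0(\overline\Omega)$ associated with $\tilde f$ is completely continuous and maps a sufficiently large ball into itself, so Schauder's theorem produces a solution $u$ of the truncated problem.

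The critical step, and the main obstacle, is to prove $\alpha\le u\le\gamma$ in spite of the $\max/\min$ structure of $\alpha,\gamma$. For the upper bound, I would test the difference $w:=(u-\gamma_j)^+$ against the equation and exploit that, on $\{u>\gamma_j\}$, the truncation gives $T(u)\le\overline\gamma$ so that $\tilde f(x,u,\nabla u)$ can be compared with $f(x,\gamma_j,\nabla\gamma_j)$ via the supersolution inequality for $\gamma_j$; an exponential test-function à la Boccardo--Murat--Puel, or equivalently a local integro-exponential change of variables, handles the quadratic gradient term and yields a pointwise a.e. inequality to which one applies the ABP principle on the open set $\{u>\gamma_j\}$, forcing $w\equiv 0$. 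Doing this for every $j$ gives $u\le\gamma_j$ for all $j$, hence $u\le\gamma$; the lower bound is symmetric with the $\alpha_i$. The subtlety is purely in handling the quadratic term through the truncation; a small strictly positive penalization term $\varepsilon(u-T(u))$ (Amann's trick), later removed by $\varepsilon\to 0$ via compactness, can be added to avoid degeneracies.

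Minimal and maximal solutions are obtained by a monotone iteration: starting from $\alpha$ (resp.\ $\gamma$), solve successively $-\Delta u_{n+1}=f(\cdot,u_n,\nabla u_n)+\lambda(u_{n+1}-u_n)$ for large $\lambda$, use the uniform $W^{2,p}$ bound and compact embedding $W^{2,p}\hookrightarrow C^1_0$ to pass to the limit, and invoke a standard Perron-type argument (Zorn's lemma in the ordered set of solutions) to conclude. For the degree computation under the strict assumption $\alpha\ll\gamma$, note that $\mathcal S$ is open in $C^1_0(\overline\Omega)$ and that strictness implies every solution of $u=\mathcal Mu$ with $\alpha\le u\le\gamma$ actually lies in $\mathcal S$. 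Taking $R$ so large that all fixed points of the truncated operator $\widetilde{\mathcal M}$ are contained in $B_R\subset C^1_0(\overline\Omega)$, the excision property of the degree yields $\deg(I-\mathcal M,\mathcal S)=\deg(I-\widetilde{\mathcal M},B_R)$. A linear homotopy $\tilde f_t:=t\tilde f+(1-t)\cdot 1$ (whose endpoint problem $-\Delta u=1$, $u|_{\partial\Omega}=0$ has a unique solution inside $\mathcal S$ by strictness) preserves the uniform a priori bound, so by homotopy invariance the degree equals $1$, finishing the proof.
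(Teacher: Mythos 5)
First, a point of comparison: the paper does not prove this statement at all --- it is quoted verbatim from \cite{de_coster_multiplicity_2017} (the classical sub/supersolution--degree theorem, whose proof goes back to the De Coster--Habets framework), so your attempt can only be measured against that standard argument, not against anything in the present text. Measured that way, your overall architecture (truncate, get a completely continuous fixed-point operator on $C^1_0(\overline\Omega)$, confine the solution between $\alpha$ and $\gamma$, then compute a degree) is the right family of ideas, but several of your key steps would fail as written.

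The most serious gap is the order in which you obtain a priori bounds. Truncating $f$ only in the $u$-variable leaves the full quadratic gradient growth $K|\xi|^2$, and your claim that an exponential (or integro-exponential) change of variables then yields a uniform $W^{2,p}$ bound via ABP is not correct: the transformed equation carries a zeroth-order term of the form $\tilde h(x)\,(g(v)+1)$ with $g(v)$ growing at least linearly in $v$ and $\tilde h$ only in $L^p$, and ABP gives no a priori bound for such a problem without a smallness or coercivity condition (this is exactly why Theorem \ref{teo_existenciasolfuerte} in the paper needs the smallness hypotheses \eqref{pequeñezc}--\eqref{concluirteo}). The standard proof runs the other way around: one first proves the $L^\infty$ confinement $\alpha\le u\le\gamma$ for the modified problem, and only then invokes a Nagumo-type estimate (Lemma \ref{Nagumo} here) to convert the $L^\infty$ bound plus quadratic growth into a $W^{2,p}$, hence $C^1$, bound. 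Your confinement step is also incomplete: on $\{u>\gamma_j\}$ the truncation gives $T(u)=\min\{\overline\gamma,u\}$, which in general is not $\gamma_j$, so comparing $f(x,T(u),\nabla u)$ with $f(x,\gamma_j,\nabla\gamma_j)$ requires a monotonicity of $f$ in $u$ that is not assumed; the classical argument instead builds the modification out of $\alpha$ and $\gamma$ themselves together with a penalization whose sign forces the confinement, and it never compares $f$ at two different gradient arguments (the growth bound $|f|\le h+K|\xi|^2$ gives no modulus of continuity in $\xi$, so your ``compare via the supersolution inequality plus exponential test function'' step has nothing to absorb the cross terms with). Finally, the degree computation is not justified: the linear homotopy to $-\Delta u=1$ is not admissible on $\mathcal S$ (the solution of the endpoint problem has no reason to satisfy $\alpha\ll u\ll\gamma$, and for intermediate $t$ the functions $\alpha,\gamma$ are no longer sub/supersolutions, so fixed points may cross $\partial\mathcal S$); the usual proof computes the degree of the modified operator in a large ball by a homotopy that preserves its intrinsic a priori bounds, shows all its fixed points lie in $\mathcal S$ by confinement plus strictness, and then transfers the degree to $I-\mathcal M$ by excision, using that $\mathcal M=\widetilde{\mathcal M}$ on $\mathcal S$.
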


\begin{remark}[{\cite[Remark 2.2]{de_coster_multiplicity_2017}}]
\label{remark22}
    If $\alpha$ and $\gamma$ are respectively strict sub and supersolutions of \eqref{caratheodory} with $\alpha \leq \gamma$, then $\alpha \ll \gamma$. 
\end{remark}

Moreover, we recall a well-known result that helps when dealing with quadratic gradient terms.

\begin{lemma}[Nagumo's lemma {\cite[Lemma 2.4]{de_coster_multiplicity_2017}}]
\label{Nagumo}
    Let $p> n$, $h\in L^p(\Omega)$, $K>0$, $R>0$. Then there exists $C>0$ such that, for all $u \in W^{2,p}(\Omega)$ satisfying
    \begin{align*}
|\Delta u| &\leq h(x) + K |\nabla u|^2, \quad \text{a.e. in } \Omega, 
\end{align*}
with $u = 0 $ on $\partial \Omega$ and
$
\|u\|_{\infty} \leq R,
$
we have
$
\|u\|_{W^{2,p}} \leq C.
$
\end{lemma}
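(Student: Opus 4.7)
The plan is a two-stage argument: a Cole--Hopf-type exponential test function yields an $H^{1}_{0}$-gradient bound, and a standard elliptic bootstrap then upgrades this to the desired $W^{2,p}$-estimate.

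For the first stage, fix $\alpha>K$ (concretely $\alpha=2K$) and observe that since $u\in W^{2,p}(\Omega)\cap C^{0}(\overline{\Omega})$ with $u=0$ on $\partial\Omega$ and $\|u\|_{\infty}\le R$, the function $\xi_{+}:=e^{\alpha u^{+}}-1$ lies in $W^{1,p}_{0}(\Omega)$, is nonnegative, and satisfies $\nabla\xi_{+}=\alpha e^{\alpha u}\nabla u\,\chi_{\{u>0\}}$. Multiplying $-\Delta u\le h+K|\nabla u|^{2}$ by $\xi_{+}$, integrating by parts, and absorbing the quadratic gradient term on the left via the pointwise identity $\alpha e^{\alpha u}-K(e^{\alpha u}-1)=(\alpha-K)e^{\alpha u}+K\ge \alpha$ on $\{u\ge 0\}$, one arrives at
\[
\alpha\int_{\{u>0\}}|\nabla u|^{2}\,dx\;\le\;(e^{\alpha R}-1)\,\|h\|_{L^{1}(\Omega)}.
\]
Applying the same computation to $-u$, which satisfies the same hypotheses, handles the set $\{u<0\}$, while Stampacchia's lemma gives $\nabla u=0$ a.e.\ on $\{u=0\}$. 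Combining, $\|\nabla u\|_{L^{2}(\Omega)}\le C_{1}=C_{1}(K,R,\|h\|_{L^{p}(\Omega)},|\Omega|)$.

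For the second stage, repeating the previous test-function computation with localized higher powers $(e^{\alpha u^{+}}-1)^{s}\eta^{2}$ for $s>1$ produces a reverse H\"older inequality, to which Gehring's lemma applies and upgrades the gradient integrability to $\nabla u\in L^{2+\delta}_{\mathrm{loc}}(\Omega)$ for some $\delta=\delta(n,p,K,R,\|h\|_{L^{p}},\Omega)>0$. Inserting this back into $|\Delta u|\le h+K|\nabla u|^{2}$ gives $\Delta u\in L^{\min(p,\,1+\delta/2)}$; the global Calder\'on--Zygmund estimate with zero boundary data promotes $u$ to $W^{2,\min(p,\,1+\delta/2)}(\Omega)$, and each round improves the integrability of $\nabla u$ through the embedding $W^{2,q}\hookrightarrow W^{1,q^{\star}}$ with $q^{\star}>q$. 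After finitely many iterations the gradient exponent reaches $2p$; at that point $|\nabla u|^{2}\in L^{p}(\Omega)$, hence $\Delta u\in L^{p}(\Omega)$, and a last application of Calder\'on--Zygmund yields $\|u\|_{W^{2,p}(\Omega)}\le C$ with $C=C(n,p,K,R,\|h\|_{L^{p}(\Omega)},\Omega)$.

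The principal obstacle is the quantitative control of constants through the bootstrap: every constant must depend only on the admissible data $(n,p,K,R,\|h\|_{L^{p}},\Omega)$ and never on the specific $u$. This is guaranteed because the uniform bound $\|u\|_{\infty}\le R$ confines the exponential factor $e^{\alpha u}$ to the fixed interval $[1,e^{\alpha R}]$ at every stage, so the Gehring step, the $L^{p}$-estimates, and the Sobolev embeddings all produce data-dependent constants. The first-stage choice $\alpha>K$ is tailored for precisely this purpose: it makes the absorbing coefficient $(\alpha-K)e^{\alpha u}+K$ bounded below by a constant depending only on $K$, decoupling the estimate from the particular profile of $u$ on $\{u>0\}$.
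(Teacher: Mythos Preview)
The paper does not give its own proof of this lemma; it is simply quoted from \cite{de_coster_multiplicity_2017}. So there is no paper proof to compare against, and the question is whether your argument stands on its own.

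Your Stage~1 is fine and standard. The gap is in Stage~2. Trace your bootstrap in dimension $n\ge 3$ (which is the setting of hypothesis \eqref{H}): Gehring gives $\nabla u\in L^{2+\delta}$ for some small $\delta>0$, hence $|\nabla u|^{2}\in L^{1+\delta/2}$ and, by Calder\'on--Zygmund, $u\in W^{2,\,1+\delta/2}$. The Sobolev embedding then yields $\nabla u\in L^{q^{\star}}$ with
\[
q^{\star}=\frac{n(1+\delta/2)}{\,n-1-\delta/2\,}.
\]
For the next round you need $q^{\star}/2>1+\delta/2$, i.e.\ $q^{\star}>2+\delta$; a short computation shows this is equivalent to $1+\delta/2>n/2$, i.e.\ $\delta>n-2$. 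For $n\ge 3$ this requires $\delta>1$, which Gehring's lemma does not provide in general. Concretely, for $n=3$ and small $\delta$ one gets $q^{\star}\approx 3/2<2+\delta$, so the iteration actually \emph{loses} integrability rather than gaining it. The phrase ``after finitely many iterations the gradient exponent reaches $2p$'' is therefore unjustified.

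The standard route---and the one the paper itself uses in the proof of Lemma~\ref{lemma:51}---bypasses this entirely: apply the exponential change of variable $v=\psi(u)$ to remove the quadratic gradient term, then invoke the global $C^{1,\alpha}$ estimates of Ladyzhenskaya--Ural'tseva for the transformed linear-growth equation. This gives $\|\nabla u\|_{L^{\infty}(\Omega)}\le C(n,p,K,R,\|h\|_{L^{p}},\Omega)$ in one stroke, after which $|\nabla u|^{2}\in L^{\infty}\subset L^{p}$ and a single application of Calder\'on--Zygmund closes the argument. Your Stage~1 exponential test function is exactly the germ of this idea; pushing it to the level of the equation (rather than merely to an energy estimate) is what makes the $C^{1,\alpha}$ theory available and avoids the failed bootstrap.
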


Finally, we prove the following lemma, which will be instrumental to obtaining the existence of multiple solutions in the proof of Theorem \ref{multpositivas}.

    \begin{lemma}
    \label{strict}
        Assume \eqref{HS} holds. Then, for every $\lambda \geq0$, and every supersolution $\gamma$ of \eqref{Plambdaprima}, there exists a strict subsolution $v_\lambda$ of \eqref{Plambdaprima} such that $v_\lambda \leq \gamma$.
\end{lemma}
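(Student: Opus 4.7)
The plan is to construct $v_\lambda$ as a solution of a perturbed version of \eqref{Plambdaprima} that automatically satisfies the subsolution inequality with strict slack, and then verify $v_\lambda \leq \gamma$ by a direct comparison argument.

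First, for a small parameter $\eta > 0$ I would introduce the auxiliary Dirichlet problem
\[
\begin{cases}
-\Delta w - b(x)|Dw| = \lambda c(x) w + \mu \beta(w)|Dw|^2 + h(x) - \eta\, c(x) & \text{in } \Omega,\\
\qquad \qquad\qquad\; w = -\eta & \text{on } \partial\Omega,
\end{cases}
\]
obtained from \eqref{Plambdaprima} by subtracting the nonnegative quantity $\eta c$ from the forcing term and lowering the boundary data by $\eta$. Any weak solution $w \in H^{1}(\Omega)\cap L^\infty(\Omega)$ of this perturbed problem is a regular subsolution of \eqref{Plambdaprima}, and the slack $-\eta c \leq 0$ in the interior (nontrivial thanks to $c \gneqq 0$), combined with the strict boundary inequality $w=-\eta<0\leq \gamma|_{\partial\Omega}$, lifts $w$ to a \emph{strict} subsolution in the sense of Definition~\ref{subsuper}, via Hopf's lemma and the strong maximum principle applied to $\gamma - w$.

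To establish existence of $w$, I would apply the integro-exponential change of variables $\tilde w := \psi(w+\eta)$ from Proposition~\ref{lapla} with $m=\mu$, which absorbs the $\mu\beta(w)|Dw|^2$ term into the transformation. The resulting equation for $\tilde w$ has only sub-linear gradient growth, so a Schauder fixed-point argument along the lines of the proof of Theorem~\ref{teo_existenciasolfuerte} yields a solution $\tilde w \in W^{2,p}(\Omega)$, provided $\eta$ is chosen small enough so that the counterparts of \eqref{pequeñezc} and \eqref{concluirteo} are met for the transformed equation (the shifted forcing $h - \eta c$ is small, and $\|c\|_q$ is already controlled by the $L^\infty$-assumption in \eqref{HS}). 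The desired $w$ is then recovered as $w = \psi^{-1}(\tilde w) - \eta$.

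Finally, since $w = -\eta \leq 0 \leq \gamma$ on $\partial\Omega$ with $w$ a subsolution and $\gamma$ a supersolution, the ordering $v_\lambda := w \leq \gamma$ in $\Omega$ follows from a comparison argument analogous to Proposition~\ref{Comparison}, applied to the difference $z := \gamma - w$ after noting that the nonlinear gradient contribution $\mu(\beta(\gamma)|D\gamma|^2 - \beta(w)|Dw|^2)$ can be absorbed into a first-order term with $L^p$-coefficients. The main obstacle I foresee is ensuring that the smallness conditions for the fixed-point step are met uniformly for $\lambda$ in a bounded range and without additional hypotheses on the data of \eqref{Plambdaprima}; the $-\eta$ slack is precisely what provides the leverage to close the iterates of the Schauder map within the assumptions of \eqref{HS} alone.
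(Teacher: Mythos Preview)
Your approach has two genuine gaps that the paper's proof avoids.

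\textbf{Existence of $w$.} You propose to obtain $w$ via a Schauder fixed-point argument in the spirit of Theorem~\ref{teo_existenciasolfuerte}. But that theorem requires the smallness condition \eqref{pequeñezc} on the zero-order coefficient, which here is $\lambda c$. The lemma is stated for \emph{every} $\lambda\ge 0$, so no such smallness is available, and choosing $\eta$ small does nothing to reduce $\lambda\|c\|_q$. Your closing remark that ``the $-\eta$ slack is precisely what provides the leverage'' is not correct: the slack perturbs the forcing, not the coefficient of the zero-order term.

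\textbf{Comparison $w\le\gamma$.} You invoke an argument ``analogous to Proposition~\ref{Comparison}'', but that proposition carries a smallness hypothesis $\mu_2<\delta_0$ depending on the $C^1$ norm of the strong sub/supersolution, which Lemma~\ref{strict} does not assume. The nonlinear gradient term $\mu(\beta(\gamma)|D\gamma|^2-\beta(w)|Dw|^2)$ cannot simply be ``absorbed into a first-order term'' without that restriction.

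The paper takes a different route that sidesteps both issues. It first uses the lower a priori bound (Theorem~\ref{Propimportante}) to fix $C_1$ with $\gamma>-C_1$, then for $k>C_1$ solves the \emph{linear} problem
\[
-\Delta\alpha_k=-\lambda c(x)k-\mu\beta(k)A^2-h^-(x)-1,\qquad \alpha_k|_{\partial\Omega}=0,
\]
where $A=\max_j\|D\gamma_j\|_{C(\overline\Omega)}$. The term $-\mu\beta(k)A^2$ is engineered precisely so that the comparison $\gamma_j\ge\alpha_k$ follows from an elementary maximum-principle computation, with no smallness on $\mu$ needed. One then truncates the zero-order term at level $-k$ and applies the sub/supersolution existence result (Theorem~\ref{thm:21coster}), which requires no smallness of $\lambda$ or $\mu$, to obtain a solution $v_k$ between $\alpha_k$ and $\gamma$. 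Since $v_k>-C_1>-k$, the truncation is inactive and $v_k$ solves the equation with forcing $-h^-(x)-1$; the constant slack $-1$ (rather than $-\eta c$) then yields strictness directly.
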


\begin{proof}
First, we fix $\gamma$ supersolution of \eqref{Plambdaprima}. We denote $C_1>0$ be given by Theorem \ref{Propimportante} such that, for every supersolution $\gamma$ of
    \begin{equation}
    \begin{aligned}
    \label{41}
         -\Delta u  &=  \lambda c(x)u + b(x)|Du| + \mu \beta(u) |Du|^2 - h^- (x)-1 \quad &&\text{in } \Omega\\
    \hspace{2.7cm}u &= 0 \hspace{3.55cm}&&\text{on } \partial \Omega,
    \end{aligned}
\end{equation}
we have $\gamma > -C_1$. Moreover, note that every supersolution of \eqref{Plambdaprima} is also a supersolution of \eqref{41}. Now, let $k > C_1$ and consider $\alpha_k$ the solution of
    \begin{equation}
    \label{eqalpha}
    \begin{aligned}
         -\Delta v  &=  -\lambda c(x)k  - \mu\beta(k) A^2 - h^- (x)-1 \quad &&\text{in } \Omega\\
    \hspace{2.7cm}v &= 0 \hspace{3.55cm}&&\text{on } \partial \Omega,
    \end{aligned}
\end{equation}
where $A := \max \{\, \|D \gamma_j\|_{C(\overline{\Omega})} \, | \, 1 \leq j \leq l\, \}$.
Since $- \lambda kc(x) - h^-(x) - \mu \beta(k) A^2 - 1   <0$, we have that $\alpha_k \ll 0$ by the strong maximum principle (see, for instance, {\cite[Theorem 3.27]{troianiello_elliptic_1987}}). Moreover, using elliptic estimates and the embedding $W^{2,p}(\Omega) \hookrightarrow C^{1}(\Omega)$, we get that
{
\begingroup
\renewcommand{\theHequation}{DalphaAnchor}
\begin{equation}
\label{eq:Dalpha}
\|D \alpha_k \|_{C(\Omega)} = \mathcal{O}(k+ \beta(k)), \;\text{for large } k.
\end{equation}
\endgroup
}

Now, we claim that $\gamma$  satisfies $\gamma \geq \alpha_k.$ Indeed, since $\gamma = \min \{\gamma_j \, | \, 1 \leq j \leq l \}$ where $\gamma_1, \dots, \gamma_l$ are regular supersolutions of \eqref{Plambdaprima}, it suffices to notice that, setting $w = \gamma_j - \alpha_k$ for some $1 \leq j \leq l$ we have

    \begin{equation*}
    \begin{aligned}
         -\Delta w  &\geq  \lambda c(x) (\gamma_j + k) + \mu \big[\underbrace{\beta(\gamma_j)|D\gamma_j|^2 +  \beta(k)  A^2}_{\geq 0}  \big] + b(x) |D\gamma_j| \geq 0\quad &&\text{in } \Omega
    \end{aligned}
\end{equation*}
with  $w = 0 $ on $ \partial \Omega$,
which, by the maximum principle, gives us $w \geq 0$, i.e., $\gamma_j \geq \alpha_k$, proving the claim.

Now, consider the truncated problem
    \begin{equation}
    \label{42}
    \begin{aligned}
         -\Delta v  &=  \lambda c(x)T_k(v) + \mu \beta(T_k(v))|Dv|^2 + b(x) |Dv|- h^- (x)-1 \quad &&\text{in } \Omega
    \end{aligned}
\end{equation}
with $v = 0 \text{ on } \partial \Omega,$ where
$$T_k(v) = \begin{cases}
    -k, \quad \text{if } v \leq -k,\\
    \; \, \,\,v, \quad \text{if } v>-k.
\end{cases}$$

We claim that $\alpha_k$ and $\gamma$ are sub and supersolutions of \eqref{42}. In fact, for $\gamma$ the verification is immediate: since $\gamma \geq -C_1 > -k$ and $\gamma$ is already an supersolution of \eqref{41}, it follows that $\gamma$ is also an supersolution of \eqref{42}. For $\alpha_k$, we know that it would be a subsolution if
$$-\Delta \alpha_k \leq \lambda c(x) T_k(\alpha_k) + \mu \beta(T_k(\alpha_k)) |D \alpha_k|^2 + b(x) |D\alpha_k| - h^-(x) -1$$
which is equivalent (up to replacing $-\Delta\alpha_k$ with the RHS of \eqref{eqalpha}) to
\begin{equation} \label{estrellalap}-\lambda c(x) k - \beta(k) \mu A^2  \leq \lambda c(x) T_k(\alpha_k) + \mu \beta(T_k(\alpha_k)) |D \alpha_k    |^2 + b(x)|D\alpha_k|.\end{equation}

Now, as $\alpha_k \ll 0$, for large enough $k$, we have $\alpha_k \leq -k$ and therefore $T_k(\alpha_k) = -k$. Using this, it yields
$$-\mu \beta(k)k \leq - \mu \beta(k) |D\alpha_k |^2 + b(x) |D \alpha_k|.$$
From here, using \eqref{eq:Dalpha}, we can conclude that \eqref{estrellalap} holds for large $k$. Hence, $\alpha_k$ is indeed a subsolution.

Therefore, by Theorem \ref{thm:21coster}, this problem \eqref{42} has a minimal solution $v_k$ with $\alpha_k \leq v_k \leq \gamma$.

\begin{claim}
    $v_k$ is a strict subsolution of \eqref{Plambdaprima}.
\end{claim}
To prove the claim, we first show that $v_k$ is a subsolution. 

Since $v_k$ is a supersolution of \eqref{41}, we have $v_k \geq -C_1 > -k$ and that it satisfies

    \begin{equation*}
    \begin{aligned}
         -\Delta v  &=  \lambda c(x)v + \mu \beta(v)|Dv|^2 + b(x) |Dv|- h^- (x)-1 \quad &&\text{in } \Omega\\
    \hspace{2.7cm}v &= 0 \hspace{3.55cm}&&\text{on } \partial \Omega.
    \end{aligned}
\end{equation*}
This implies that $v_k$ is a subsolution of \eqref{Plambdaprima}.
Finally, to verify it is strict, let $u$ be a solution of \eqref{Plambdaprima} with $u \geq v_k$. Then $w = u - v_k$ satisfies
    \begin{equation*}
    \begin{aligned}
         -\Delta w  &=  \lambda c(x)w + \mu (\beta(u)|Du|^2 - \beta(v_k)|Dv_k|^2) + b(x) |Dw|- h^- (x)-1 + h(x) \quad &&\text{in } \Omega\\
    \hspace{2.7cm}w &= 0 \hspace{3.55cm}&&\text{on } \partial \Omega,
    \end{aligned}
\end{equation*}
which, by using that $h(x) = h^+(x) - h^-(x)$, we have \begin{equation*}
\begin{aligned}
\mu (\beta(u)|Du|^2 - \beta(v_k)|Dv_k|^2)&=\mu(\beta(u) - \beta(v_k))|Du|^2 + \mu \beta(u) (|Du|^2 - |Dv_k|^2), \\
&\geq \mu \beta(u) (|Du| + |D v_k|) |Dw|, 
\end{aligned}
\end{equation*} and after defining $\tilde{b}(x) = b(x) + \mu \beta(u) ( |Du| + |Dv_k|)$, becomes
    \begin{equation*}
    \begin{aligned}
         -\Delta w  - \tilde{b}(x)|Dw|&\geq  \lambda c(x)w+h^+ (x)+1\geq 1  \quad &&\text{in } \Omega
    \end{aligned}
\end{equation*}
with $w = 0 \text{ on } \partial \Omega.$
From here, using once again the strong maximum principle {\cite[Theorem 3.27]{troianiello_elliptic_1987}}, we obtain $w \gg 0$, which is equivalent to $u \gg v_k$. In other words, we proved that $v_k$ is a strict subsolution of \eqref{Plambdaprima}, concluding the proof.
\end{proof}

\subsection{Regularity and \emph{a priori} bounds}
We dedicate this section to the development of \emph{a priori} bounds for the problem \eqref{Plambdaprima}. With this goal, we begin this section with a fundamental regularity result.

\begin{lemma}
\label{lemma:51}
    Let \eqref{H} hold and $u \in H^1_0(\Omega) \cap L^\infty(\Omega)$ be a weak solution of \begin{equation}
    \label{ch5:modelo}
        -\Delta u - b(x)|Du| = c(x) u + \mu \beta(u) |Du|^2 + h(x),
    \end{equation} 
    then $u \in W^{2,p}(\Omega)$.
\end{lemma}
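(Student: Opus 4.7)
The plan is to reduce the proof to a standard elliptic regularity problem by exploiting the harmonic-map change of variables from Proposition \ref{lapla}, which removes the quadratic gradient nonlinearity from the equation. Once the equation is ``linearized'' in this sense, a bootstrap will promote $v=\psi(u)$ from $H^{1}_{0}\cap L^{\infty}$ to $W^{2,p}$, and the chain rule will transfer this regularity back to $u$.

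\textbf{Step 1 (change of variables).} I rewrite \eqref{ch5:modelo} as $\Delta u + b(x)|Du| + \mu\beta(u)|Du|^{2} + c(x)u = -h(x)$ and apply Proposition \ref{lapla} with $f:=-h$ and $m=\mu$. Since $u\in H^{1}_{0}(\Omega)\cap L^{\infty}(\Omega)$, the proposition asserts that $v:=\psi(u)\in H^{1}_{0}(\Omega)\cap L^{\infty}(\Omega)$ is a weak solution of
$$-\Delta v - b(x)|Dv| \;=\; \bigl(g(v)+1\bigr)\bigl[\,h(x)+c(x)\,\psi^{-1}(v)\,\bigr]\;=:\;F(x)\quad\text{in }\Omega.$$

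\textbf{Step 2 (regularity of $v$).} Because $v\in L^{\infty}(\Omega)$ and $g,\psi^{-1}$ are continuous, $(g(v)+1)$ and $\psi^{-1}(v)$ lie in $L^{\infty}(\Omega)$; combined with $h,c\in L^{p}(\Omega)$ this gives $F\in L^{p}(\Omega)$. I now bootstrap the equation $-\Delta v = b(x)|Dv|+F(x)$. Starting from $|Dv|\in L^{2}(\Omega)$, H\"older's inequality places $b|Dv|+F$ in some $L^{r_{0}}(\Omega)$; Calder\'on--Zygmund estimates then give $v\in W^{2,r_{0}}(\Omega)$, which, via Sobolev embedding of $W^{2,r_{0}}$ into $W^{1,r_{0}^{**}}$, improves the integrability of $Dv$. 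Since $q>n$ the exponent strictly increases at each step, and after finitely many iterations one reaches $Dv\in L^{\infty}(\Omega)$. At that point $b|Dv|\in L^{q}(\Omega)\subseteq L^{p}(\Omega)$, so the Calder\'on--Zygmund estimate (e.g.\ \cite[Theorem~9.15]{gilbarg_elliptic_2001}) yields $v\in W^{2,p}(\Omega)$. (Alternatively, one could invoke a Ladyzhenskaya--Uraltseva type $C^{1,\alpha}$ estimate directly to skip the explicit iteration.)

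\textbf{Step 3 (returning to $u$).} Since $\psi^{-1}\in C^{2}(\mathbb{R})$, the chain rule gives $Du=(\psi^{-1})'(v)Dv$ and
$$D^{2}u=(\psi^{-1})''(v)\,Dv\otimes Dv+(\psi^{-1})'(v)\,D^{2}v.$$
As $v\in L^{\infty}(\Omega)$ the coefficients $(\psi^{-1})'(v)$ and $(\psi^{-1})''(v)$ are bounded, and because $p>n$ Morrey's inequality applied to $v\in W^{2,p}(\Omega)$ gives $Dv\in L^{\infty}(\Omega)$, so $Dv\otimes Dv\in L^{\infty}(\Omega)\subseteq L^{p}(\Omega)$. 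Hence $u\in W^{2,p}(\Omega)$.

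The main obstacle lies in Step 2: the drift coefficient $b$ is only in $L^{q}$ and not in $L^{\infty}$, so one must track carefully that the Sobolev exponent strictly improves at each bootstrap stage (which it does because $q>n$). The passage through $C^{1,\alpha}$ could be used as a shortcut, but then one must verify that the standard quasilinear $C^{1,\alpha}$ machinery applies to solutions in the $H^{1}\cap L^{\infty}$ class with unbounded drift $b\in L^{q}$.
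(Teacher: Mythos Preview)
Your proof is correct and follows essentially the same approach as the paper: both apply the change of variables from Proposition~\ref{lapla} to remove the quadratic gradient term, show that the right-hand side of the transformed equation lies in $L^{p}$, upgrade the regularity of $v$, and then transfer back to $u$. The differences are purely in execution: the paper invokes the Ladyzhenskaya--Uraltseva $C^{1,\alpha}$ estimate directly (which you mention as an alternative) rather than iterating a bootstrap, and in the final step the paper only extracts $Dv\in C(\overline{\Omega})$, deduces $Du\in C(\overline{\Omega})$, and then applies $W^{2,p}$ regularity to the \emph{original} equation $-\Delta u=\tilde{H}$, whereas you push $v$ all the way to $W^{2,p}$ and use the chain rule for $u=\psi^{-1}(v)$; both routes are valid.
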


\begin{proof}
    Set $v= \psi_1(u) := \int_0^u e^{m \int_0^t \beta(s) ds} dt $ with $m = \frac{1}{\mu}$. By Proposition \ref{lapla}, we know that $v\in H^1_0(\Omega) \cap L^\infty(\Omega)$ is a weak solution of
    \begin{equation*}
        - \Delta v -b(x)|Dv|=  c(x) (1+g(v)) \psi^{-1}(v) + h(x) (1+g(v)).
    \end{equation*}
    Setting $$\tilde{h}(x) = c(x) (1+g(v)) \psi^{-1}(v) + h(x) (1+g(v)),$$
    we claim that $\tilde{h} \in L^p(\Omega)$. Indeed, since $\psi^{-1}(v) = u \in L^{\infty}(\Omega)$, we have $1+g(v) = \psi'(\psi^{-1}(v)) \in L^{\infty}(\Omega) $. Hence, 
    $$\|\tilde{h}\|_{L^p(\Omega)} \leq \|c\|_{L^p(\Omega)} \|\psi'(\psi^{-1}(v))\|_{L^{\infty}(\Omega)} \|\psi^{-1}(v)\|_{L^\infty (\Omega)} + \|\psi'(\psi^{-1}(v))\|_{L^{\infty}(\Omega)}\|h\|_{L^p(\Omega)} < +\infty. $$
    
    Notice that $v \in H_0^1(\Omega)$ and $n < p \leq q$, allow us to use global $C^{1,\alpha}$ regularity estimates of weak solutions (see {\cite[Chap. 3, Theorem 15.1]{ladyzhenskaya_linear_1968}} for $Lv := \Delta v + b(x)|Dv|$) to get 
    $$\|v\|_{C^{1}(\overline\Omega)} \leq C \|\tilde{h}\|_{L^p(\Omega)}.$$
    
    Now, given that $Dv = \psi'(u) Du \in C(\overline{\Omega})$ we conclude that $Du \in C(\overline{\Omega})$ and also  $|Du|^2 \in L^{\infty}(\Omega)$.
    Finally, $u$ becomes a weak solution of $-\Delta u = \tilde H(x) $ in $\Omega$, with $\tilde H(x):= b(x)|Du|+c(x)u+\mu \beta (u) |Du|^2 +h(x) \in L^p (\Omega)$. Then, standard $W^{2,p}$ regularity estimates for weak solutions give that $u\in W^{2,p}(\Omega)$.
\end{proof}

As a direct consequence of this result, we obtain a connection between weak solutions of \eqref{ch5:modelo}, strong solutions, and $L^p$-viscosity solutions, which is summarized in the following corollary.

\begin{corollary}
\label{cor:visco}
    Assuming \eqref{H} holds, any solution $u \in H_0^1(\Omega) \cap L^\infty(\Omega)$ of
    \begin{equation*}
        -\Delta u - b(x)|Du|= c(x) u + \mu \beta(u) |Du|^2 + h(x)
    \end{equation*} 
    is also an $L^p$-viscosity solution.
\end{corollary}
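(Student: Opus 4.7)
The plan is to chain together two results already proved in the paper: the regularity upgrade of Lemma~\ref{lemma:51} and the strong-to-viscosity bridge of Proposition~\ref{prop:stronglp}. First, by Lemma~\ref{lemma:51}, any weak solution $u \in H^{1}_{0}(\Omega) \cap L^{\infty}(\Omega)$ of the equation actually lies in $W^{2,p}(\Omega)$. Under \eqref{H}, all the terms in
\[
-\Delta u - b(x)|Du| - c(x) u - \mu \beta(u)|Du|^{2} = h(x)
\]
are in $L^{p}(\Omega)$: $c(x)u \in L^{p}$ since $u \in L^{\infty}$, $b(x)|Du| \in L^{p}$ since $Du \in C(\overline{\Omega})$ via the Morrey embedding $W^{2,p}(\Omega) \hookrightarrow C^{1}(\overline{\Omega})$ for $p>n$, and $\mu\beta(u)|Du|^{2} \in L^{\infty} \subset L^{p}$ by boundedness of $u$ and $Du$. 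Consequently, the equation holds pointwise a.e., making $u$ a \emph{strong} solution.

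Next, I would recast the equation in the fully nonlinear form $F(x,u,Du,D^{2}u) = f(x)$ by setting
\[
F(x, r, p, X) := \operatorname{tr}(X) + b(x)|p| + c(x)\, r + \mu\beta(r)|p|^{2}, \qquad f(x):=-h(x),
\]
and verify that $F$ satisfies the harmonic map-like structure \eqref{SCbeta}. Since $\lambda_{P} = \Lambda_{P} = 1$ gives $\operatorname{tr}(X-Y) = \mathcal{M}^{+}(X-Y) = \mathcal{M}^{-}(X-Y)$, the second-order bound is immediate; the first-order term is handled by the reverse triangle inequality $\bigl||p|-|q|\bigr|\leq |p-q|$; the zero-order contribution follows with the linear modulus $\omega(t)=t$ (using $|c(x)|$ as the structural coefficient, since \eqref{SCbeta} is only used as a one-sided bound on the variation of $F$); and the quadratic gradient term is controlled by the identity
\[
\beta(r)|p|^{2} - \beta(s)|q|^{2} = \beta(r)\bigl(|p|+|q|\bigr)\bigl(|p|-|q|\bigr) + |q|^{2}\bigl(\beta(r)-\beta(s)\bigr),
\]
which reproduces precisely the two gradient-quadratic terms appearing on the right-hand side of~\eqref{SCbeta}.

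With the structure condition verified, Proposition~\ref{prop:stronglp} applies directly and yields that the strong solution $u$ is also an $L^{p}$-viscosity solution of $F(x,u,Du,D^{2}u) = -h(x)$, which is equivalent to the original equation. I expect the only subtle point to be the bookkeeping around a sign-changing $c\in L^{p}(\Omega)$ when inserting it into \eqref{SCbeta}; this is circumvented by using $|c|$ as the structural coefficient, or, if one prefers, by absorbing the linear $c(x)u$ term into a lower-order part and retracing the ABP-based argument of Proposition~\ref{prop:stronglp} verbatim. Beyond this observation, the proof is essentially a direct citation of the two earlier results.
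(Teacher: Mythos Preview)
Your proposal is correct and follows exactly the paper's own two-line proof: invoke Lemma~\ref{lemma:51} to upgrade to $W^{2,p}$ (hence strong) regularity, then apply Proposition~\ref{prop:stronglp} to pass from strong to $L^p$-viscosity. The extra verification of \eqref{SCbeta} you supply is correct but more than is needed; note in particular that in the proof of Proposition~\ref{prop:stronglp} the structure condition is applied with $r=s=u(y)$, so the zero-order term $c(x)\omega((r-s)^+)$ drops out entirely and your concern about sign-changing $c$ is moot.
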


\begin{proof}
    Indeed, from Lemma \ref{lemma:51} we know that $u \in W^{2,p}(\Omega)$, i.e., $u$ is a strong solution. Therefore, by Proposition \ref{prop:stronglp} we get that $u$ is an $L^p$-viscosity solution, yielding the desired result.
\end{proof}
\begin{remark}
\label{subsuperleq}
 Let \eqref{H} hold. By Proposition \ref{Comparison}, Lemma \ref{lemma:51} and Corollary \ref{cor:visco}, we conclude that if $\alpha$ is a subsolution of \eqref{P0prima} and $\gamma$ a supersolution of \eqref{P0prima}, then there exists $\delta^\star >0$ such that if $\mu < \delta^\star$, then $\alpha \leq \gamma$. The same holds under \eqref{HS}. 
\end{remark}

We now present the \emph{a priori} bounds for solutions of \eqref{Plambdaprima} independent of the sign of the solutions. This theorem can be seen as an extension of Theorem \ref{teoapriori2}, without the nonnegative condition of the solutions.
\begin{theorem}
\label{apriorilaplaciano}
    Let $\Omega\in C^{1,1}$ be a bounded domain. Suppose \eqref{HS}, \eqref{H0prima} hold and let $\Lambda_1, \Lambda_2$ with $0 < \Lambda_1 < \Lambda_2$. Then, there exists $\delta_0 = \delta_0(u_0, \beta)$ such that, if $\mu < \delta_0,$ then every $L^p$-viscosity solution $u$ of \eqref{Plambdaprima} satisfies
    $$\|u\|_{L^{\infty}(\Omega)} \leq C, \, \, \text{for all } \lambda \in [\Lambda_1, \Lambda_2], $$
    where $C = C(n,p,\mu,\Omega, \Lambda_1, \Lambda_2, \|b\|_{L^{q}(\Omega)}, \|c\|_{L^{\infty}(\Omega)}, \|h\|_{L^{p}(\Omega)}, \|u_0\|_{L^{\infty}(\Omega)}, \Omega,  \{x \in \Omega \; | \; c(x) = 0 \}).$
\end{theorem}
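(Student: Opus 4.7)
The strategy is to split the $L^\infty$ estimate into uniform control of $u^-$ and $u^+$ separately. Under \eqref{HS}, the operator $u \mapsto -\Delta u - b(x)|Du|$ fits the structure condition \eqref{SC0} with $\lambda_P = \Lambda_P = 1$, so Theorem \ref{Propimportante} applies to every $L^p$-viscosity solution of \eqref{Plambdaprima}, viewed as a supersolution, and yields $\|u^-\|_{L^\infty(\Omega)} \leq C_1$ uniformly in $\lambda \in [0, \Lambda_2]$, with $C_1$ independent of $\lambda$ and of $u$; this step requires no smallness condition on $\mu$.

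For the bound on $u^+$, I would adapt the proof of Theorem \ref{teoapriori2} to the sign-changing setting, arguing by contradiction: pick a sequence $(u_k)$ of solutions with $\lambda_k \in [\Lambda_1, \Lambda_2]$ and $u_k^+(x_k) = \|u_k^+\|_\infty \to +\infty$, $x_k \to x_0 \in \overline{\Omega}$. First I would relocate the blow-up point exactly as in Claim \ref{claimapriori}: on the maximal open set $G = \{c = 0\}$, the functions $u_k - \sup_{\partial G} u_k$ and $u_0 - \inf_{\partial G} u_0$ are respectively a supersolution and a subsolution of $(P'_0)$, so Remark \ref{subsuperleq} (which combines Proposition \ref{Comparison} with the regularity upgrade of Lemma \ref{lemma:51} and Corollary \ref{cor:visco}) yields $u_k \leq u_0 + 2\|u_0\|_\infty + \sup_{\partial G} u_k$ on $G$. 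Since $u_k|_{\partial \Omega} = 0$ and $\partial G \subset \partial \Omega \cup \partial\{c>0\}$, the blow-up migrates to a new point $y_0$ admitting a (half-)ball on which $c \gneqq 0$; the smallness $\mu < \delta_0$ in the conclusion is inherited from the comparison-principle threshold $\mu < \delta^\star$ required at this step.

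Next, on the selected (half-)ball I would apply the unsigned Laplacian change of variables from Corollary \ref{cor:cv} with $m = \mu$, setting $v_k = \psi(u_k)$, which transforms the equation into
\begin{equation*}
-\Delta v_k - b(x)|Dv_k| = \lambda_k c(x) \psi^{-1}(v_k)(g(v_k)+1) + h(x)(g(v_k)+1).
\end{equation*}
Since $\psi$ is odd and $u_k^- \leq C_1$, the translates $V_k := v_k + \psi(C_1) \geq 0$ are nonnegative and still satisfy $V_k(x_k) \to +\infty$, so the BQSMP and BWHI from \cite[Theorems 1.1 and 1.2]{sirakov_boundary_2018} can be applied to $V_k$ as in the second half of the proof of Theorem \ref{teoapriori2}: a chain of integral inequalities combined with dominated convergence forces $\int_{B_1^+}(x_n c(x))^\varepsilon\, dx = 0$, contradicting $c \gneqq 0$. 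The hard part, exactly as in Theorem \ref{teoapriori2}, is the almost-linear log-tempered growth $\psi^{-1}(v)(g(v)+1) \sim v \ln v$ for large $v$ (cf.\ \eqref{estrellita}--\eqref{estrellita2}); the $O(1)$ translation by $\psi(C_1)$ perturbs the nonlinearity only to lower order, so Claim \ref{claimclave1} and its asymptotics transfer with minor modifications, but tracking the dependence of the constants on $\|u_0\|_\infty$ and $\beta$ through these asymptotics is the most delicate piece of the bookkeeping.
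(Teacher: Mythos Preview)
Your proposal is correct and follows essentially the same approach as the paper: use Theorem~\ref{Propimportante} for the uniform lower bound, relocate the blow-up point via Claim~\ref{claimapriori} (with the smallness on $\mu$ coming from the comparison principle), apply the Laplacian change of variables $v_k=\psi(u_k)$ with $m=\mu$, and feed the resulting equation into the BQSMP/BWHI/BLMP machinery from the proof of Theorem~\ref{teoapriori2}. The only cosmetic difference is that you shift to $V_k=v_k+\psi(C_1)\ge 0$ before invoking the boundary Harnack inequalities, whereas the paper keeps the sign-changing $v_1$ and instead checks directly that the key superlinear lower bound $\psi'(\psi^{-1}(v_1))\ge Cv_1$ (equivalently $g(v_1)\ge Cv_1-1$) survives on the range $v_1\ge\psi(-C_1)$, which is trivial for $v_1\le 0$; the two devices are interchangeable and your translation is arguably the cleaner way to phrase it.
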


\begin{proof}
We begin by fixing $\Lambda_1, \Lambda_2$ such that $0 < \Lambda_1 < \Lambda_2$. 
From Theorem \ref{Propimportante}, there exists $C_1 > 0$ such that \begin{equation*}
\label{fijoC1} u^- \leq C_1 \quad \text{for every $u$ supersolution of \eqref{Plambdaprima}, for all $\lambda \in [0,\Lambda_2]$.}\end{equation*}
Reasoning by contradiction, we suppose that the solutions of \eqref{Plambdaprima} are not bounded from above in $[\Lambda_1, \Lambda_2]$. I.e. there exists a sequence $u_k$ of $L^p$-viscosity solutions of \eqref{Plambdaprima} such that $$u_k^+(x_k) \xrightarrow{k \rightarrow\infty} + \infty, \; x_k \in \overline{\Omega}, \; x_k \rightarrow x_0 \in \overline{\Omega}$$
where $x_k = \text{arg}max_{x \in \overline{\Omega}} |u_k(x)|$, i.e., $\|u_k\|_{L^{\infty}(\Omega)} = |u_k(x_k)|$. Here, $\|u_k\|_{L^{\infty}(\Omega)} = u_k^+(x_k) + u_k^-(x_k)$, and we know that $u_k^- (x_k) \in [0,C_1]$.
Now, as in Claim \ref{claimapriori}, we conclude that, up to changing the blow-up point $x_0$, we may suppose that there is a ball around $x_0$ in which $c$ is not identically zero.

Now, suppose that we are in the case of the half ball, and namely $c \gneqq 0$ in $B^+_1 = B_1^+ (x_0)$, with our equation being defined in $B_2^+(x_0) \subset \Omega$. Now, note that $u$ is an $L^p$-viscosity solution of
$$\Delta u + b(x)|Du| = -\lambda c(x) u - \mu \beta(u) |Du|^2 + h^- (x). $$
Then, by Corollary \ref{cor:cv}, the function
$$v_1 = \psi_1(u) = \int_0^u e^{m\int_0^t \beta(s) ds}dt, \quad \text{with } m = \frac{1}{\mu}$$
is an $L^p$-viscosity solution of
$$\mathcal{L}^{-}[v_1] = f_1(x) \quad \text{in } B_2^+$$
where $\mathcal{L}^-[v_1] := \Delta v_1 -b(x) |Dv_1| - h^-(x) g(v_1),$ with $g(v_1) = \psi'(\psi_1^{-1}(v_1)) -1$, and 
$$f_1(x) := -\lambda c(x) (g_1(v_1) + 1) \psi_1^{-1}(v_1) + h^{-}(x) \in L^p(\Omega)$$
since $v_1 \in L^{\infty}(\Omega)$ and $\psi_1, \psi^{-1}_1 \in C^{2}(\mathbb{R})$. 
\begin{claim}
    There exists a constant $C >0$ such that $\psi_1'(\psi_1^{-1}(v_1)) \geq C v_1.$
\end{claim}
\begin{proof}
We distinguish two cases, accordingly to the sign of $v_1$.

As a first case, suppose \(v_1 \in [\psi_1(-C_1),0]\). Since \(\psi_1(-C_1)<0\), we have
$
\psi_1'(\psi_1^{-1}(v_1)) \geq 0 \geq C v_1,
$
so the inequality holds trivially for any $C>0$.
Otherwise, we refer to the proof of the properties of $g$ in Proposition \ref{lemma:cv}, which, combined with the first case, yields the claim.
\end{proof}
Now, setting $\mathcal{L}_1^-[v_1] := \Delta v_1 - b(x) |Dv_1| - C h^-(x) v_1 $, we have that $v_1$ is an $L^p$-viscosity solution of
$$\mathcal{L}_1^-[v_1] = f_1(x) \quad \text{in } B_2^+,$$
and from here, the remainder of the proof proceeds exactly as in the proof of Theorem \ref{teoapriori2}, only changing that, in this case, $v_1$ can be sign-changing. Since $v_1$ is bounded from below and we just proved that the inequality $g(v_1) \geq Cv_1 -1$ holds, we get the asymptotics proved in the proof of Theorem \ref{teoapriori2}.
\end{proof}
We now present a non-existence result that will be useful in what follows. This result can also be seen as an immediate extension of {\cite[Lemma 3.6]{de_coster_multiplicity_2017}} to our equation structure.

\begin{lemma}
\label{36}
    Assuming \eqref{HS}, for every $\Lambda_2 >0$, there exists $A_1 >0$, independent of $\lambda \in [0,\Lambda_2]$, such that the problem
    \begin{equation}
    \label{33}-\Delta u = \lambda c(x) u+ b(x)|Du| + \mu \beta(u)|Du|^2 + h(x) + a c(x), \quad u \in H_0^1(\Omega) \cap L^{\infty}(\Omega)\end{equation}
    has no solution for $a \geq A_1$.
\end{lemma}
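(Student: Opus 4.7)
\emph{Overall strategy.} I argue by contradiction: assume there exist sequences $a_{n}\to+\infty$ and weak solutions $u_{n}\in H^{1}_{0}(\Omega)\cap L^{\infty}(\Omega)$ of \eqref{33}. By Lemma~\ref{lemma:51} each $u_{n}\in W^{2,p}(\Omega)$ and hence, by Corollary~\ref{cor:visco}, is an $L^{p}$-viscosity solution; since $(h+a_{n}c)^{-}\le h^{-}$ (as $c\ge 0$, $a_{n}\ge 0$), Theorem~\ref{Propimportante} provides the uniform bound $\|u_{n}^{-}\|_{\infty}\le C_{1}$, independent of $n$ and of $\lambda\in[0,\Lambda_{2}]$. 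I then apply the integro-exponential change of variables $v_{n}:=\psi(u_{n})$ from Proposition~\ref{lapla} with $m=\mu$: since $\psi$ is odd and strictly increasing, $v_{n}\in H^{1}_{0}(\Omega)\cap L^{\infty}(\Omega)$ with $\|v_{n}^{-}\|_{\infty}\le \psi(C_{1})$, and $v_{n}$ satisfies weakly
\[
 -\Delta v_{n}-b(x)|Dv_{n}|=(g(v_{n})+1)\bigl(h(x)+a_{n}c(x)\bigr)+\lambda c(x)(g(v_{n})+1)\psi^{-1}(v_{n}).
\]

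\emph{Test against the principal eigenfunction.} Let $\varphi_{1}>0$ be the first eigenfunction of the weighted Dirichlet problem $-\Delta\varphi_{1}=\gamma_{1}c(x)\varphi_{1}$ on $\Omega$. Multiplying the equation for $v_{n}$ by $\varphi_{1}$, integrating, and using $-\Delta\varphi_{1}=\gamma_{1}c\varphi_{1}$ produces
\[
 a_{n}A_{n}=\gamma_{1}B_{n}-\int_{\Omega}b|Dv_{n}|\varphi_{1}-\int_{\Omega}h(g(v_{n})+1)\varphi_{1}-\lambda\int_{\Omega}c(g(v_{n})+1)\psi^{-1}(v_{n})\varphi_{1},
\]
with $A_{n}:=\int_{\Omega}c(g(v_{n})+1)\varphi_{1}\ge\int_{\Omega}c\varphi_{1}>0$ and $B_{n}:=\int_{\Omega}cv_{n}\varphi_{1}$. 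Three reductions control the correction terms: $-\int b|Dv_{n}|\varphi_{1}\le 0$; the identity $(g(v_{n})+1)\psi^{-1}(v_{n})=\psi'(u_{n})u_{n}\ge -\psi'(C_{1})C_{1}$ bounds the $\lambda$-term by a constant multiple of $\int c\varphi_{1}$; and the $h$-term is absorbed via H\"older/Young, exploiting the positivity of $\varphi_{1}$ and the structural hypothesis $c\gneqq 0$. One obtains
\[
 a_{n}A_{n}\le \gamma_{1}B_{n}+K,
\]
with $K$ independent of $n$.

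\emph{Jensen and asymptotic blow-up.} Since $g$ is convex on $\R$ (Proposition~\ref{lapla}) and $d\pi:=(c\varphi_{1}\,dx)/\!\int_{\Omega}c\varphi_{1}$ is a probability measure, Jensen's inequality gives
\[
 A_{n}\ge (g(\bar v_{n})+1)\int_{\Omega}c\varphi_{1},\qquad \bar v_{n}:=\frac{B_{n}}{\int_{\Omega}c\varphi_{1}}\ge -\psi(C_{1}).
\]
Substituting yields $a_{n}(g(\bar v_{n})+1)\le \gamma_{1}\bar v_{n}+K'$ with $K'$ independent of $n$. From the asymptotic estimate used in Claim~\ref{claimclave1} and in the proof of Theorem~\ref{teoapriori2}, $g(v)\sim m\,v(\ln v)^{k/(k+1)}$ as $v\to+\infty$, so the map $v\mapsto v/(g(v)+1)$ is uniformly bounded above by some $M>0$ on $[-\psi(C_{1}),+\infty)$. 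Hence $a_{n}\le \gamma_{1}M+K'$, contradicting $a_{n}\to+\infty$, and the proof concludes by choosing $A_{1}:=\gamma_{1}M+K'+1$.

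\emph{Main obstacle.} The most delicate step is absorbing $\int h(g(v_{n})+1)\varphi_{1}$ into the constant $K$, since $h$ is only $L^{p}$ and cannot be dominated pointwise by $c$. The natural route is to split $\Omega$ into $\{c\ge\delta\}$ --- where $|h|(g+1)\varphi_{1}\le (\|h\|_{\infty}/\delta)\,c(g+1)\varphi_{1}$ can be absorbed into $a_{n}A_{n}$ after taking $\delta$ suitably small relative to $a_{n}$ --- and $\{c<\delta\}$, where the Hopf-type bound $\varphi_{1}\lesssim\mathrm{dist}(\cdot,\partial\Omega)$ together with the $L^{p}$-integrability of $h$ controls the remaining contribution uniformly in $n$. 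Once this technical absorption is in place, the Jensen-plus-asymptotics step closes the argument as described.
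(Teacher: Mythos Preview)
Your approach---change of variables $v=\psi(u)$, test against the principal eigenfunction $\varphi_{1}$, then invoke Jensen's inequality and the superlinear growth of $g$---is genuinely different from the paper's. The paper works directly with the original equation, splits into three cases according to the sign of $u$, tests against $u^{+}$, $u$, or $u^{-}$, and crucially uses the \emph{full} a priori bound $\|u\|_{L^{\infty}}\le K$ (from Theorem~\ref{apriorilaplaciano} together with Theorem~\ref{Propimportante}). With $u$ uniformly bounded, every term on the right-hand side---including $\mu\beta(u)|Du|^{2}u^{+}$ and $\int hu^{+}$---is controlled by a fixed constant, and $a\int cu^{+}$ forces the contradiction.

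Your argument has a real gap precisely where you flag it. The term $\int_{\Omega}h^{-}(g(v_{n})+1)\varphi_{1}$ cannot be absorbed as you propose, for two reasons. First, $h\in L^{p}$ only (not $L^{\infty}$ under \eqref{HS}), so the bound $|h|\le(\|h\|_{\infty}/\delta)\,c$ on $\{c\ge\delta\}$ is unavailable. Second, and more fundamentally, on $\{c<\delta\}$ the factor $g(v_{n})+1=\psi'(u_{n})$ is \emph{not} uniformly bounded in $n$: you have only controlled $u_{n}^{-}$, not $u_{n}^{+}$. The Hopf bound $\varphi_{1}\lesssim\mathrm{dist}(\cdot,\partial\Omega)$ and $h\in L^{p}$ together control $\int|h|\varphi_{1}$, but not $\int|h|(g(v_{n})+1)\varphi_{1}$ when $g(v_{n})$ may blow up. In particular, on the set $\{c=0\}\cap\{h^{-}>0\}$ (which \eqref{HS} does not exclude) your identity provides no leverage whatsoever: $A_{n}$ sees nothing there, while the $h^{-}$-integral can diverge with $u_{n}^{+}$.

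To close your argument you would first need a uniform bound on $u_{n}^{+}$---but once you have that, the change of variables and Jensen are superfluous, and the paper's direct testing against $u^{\pm}$ is both simpler and sufficient.
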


\begin{proof}
We will divide the proof in three cases: $u$ being a sign-changing solution, i.e., $\Omega^+ := \{ x \in \Omega \, | \, u(x) >0\}$ and $\Omega ^- := \{ x \in \Omega \, | \, u(x) \leq 0\}$ both nonempty, $u$ being nonnegative and $u$ being nonpositive.

\textit{\textbf{Case 1:} $u$ is sign-changing.}

Let $u^+:= \max \{u,0 \} \in H_0^1(\Omega) \cap L^{\infty}(\Omega)$ be a test function in \eqref{33}. This yields
\begin{equation}
\label{eqlema}\int_\Omega |Du^+|^2 = \int_\Omega \big [ \lambda c(x) u + b(x) |Du| + \mu \beta(u) |Du|^2 + h(x) + a c(x) \big ] u^+.\end{equation}
Now, using Poincaré inequality and the fact that, by Theorem \ref{apriorilaplaciano}, there exists $K >0$ independent of $\lambda$ such that $\| u \|_{L^{\infty}(\Omega)} \leq K$, we obtain
\begin{equation*}
    \begin{aligned}
   C_p K |\Omega|  \geq C_p \|u^+\|^2_{L^2(\Omega)}\geq \int_{\Omega} |Du^+| ^2,
    \end{aligned}
\end{equation*}
Defining 
$
    C := C_p K\,|\Omega|,
    $
and noting that 
$
    u\,u^+ = (u^+)^2, 
    \;
    Du \cdot Du^+ = |Du^+|^2,
$
it follows that every term on the right-hand side of \eqref{eqlema}, except for the integral 
\(\int_\Omega h\,u^+\), is nonnegative. Therefore, we obtain
\begin{equation*}
    \begin{aligned}
        C &\geq a \int_\Omega c u^+ + \int_\Omega h u^+ + \mu \int_\Omega \beta(u) |Du^+|^2 u^+ + \int_{\Omega} b(x) |Du^+| u^+ + \lambda \int_{\Omega} c (u^+)^2,\\
        &\geq a \int c u^+ - \int_\Omega |h| u^+ - \mu \int_\Omega \beta(u^+) |Du^+|^2 u^+ - \int_{\Omega} b(x) |Du^+| u^+, \\
        &\geq a \int c u^+ - K  \|h\|_{L^p(\Omega)} - \mu \beta(K) K \|Du^+\|_{L^2(\Omega)}^2 - K \|b\|_{L^2(\Omega)} \|Du^+\|_{L^2(\Omega)}\\
        &\geq a \int cu^+ - K \|h\|_{L^p(\Omega)} - \mu \beta(K) C - \|b\|_{L^q(\Omega)} C,
    \end{aligned}
\end{equation*}
which, due to $u^+$ being uniformly bounded, yields a contradiction for large enough $a$.

\textit{\textbf{Case 2:} $u$ is nonnegative.}

It is essentially the same idea as Case 1, just that instead of $u^+$ as a test function, we use $u$.

\textit{\textbf{Case 3:} $u$ is nonpositive.}

Reasoning as in the Case 1, let $u^-:= \max \{0,-u\}$ be a test function in \eqref{33},

\begin{equation*}
    \int_{\Omega} |Du^-|^2 = \int_\Omega \big [ \lambda c(x) u + b(x) |Du| + \mu \beta(u) |Du|^2 + h(x) + a c(x) \big ] u^-.
\end{equation*}

Next, we use Theorem \ref{Propimportante} to obtain $C_1 >0$ such that $\|u^-\|_{L^{\infty}(\Omega)} \leq C_1$, and Poincaré inequality to obtain
$$C:=C_p C_1 |\Omega| \geq C_p \|u^-\|_{L^2(\Omega)}^2 \geq \int_\Omega |Du^-|^2.$$
Now, by similar reasoning as in Case 1,  it follows that
\begin{equation*}
\begin{aligned}
C &\geq a \int_\Omega c u^- + \int_\Omega h u^-  + \mu \int_\Omega \beta(u) |Du^-|^2 u^- + \int_\Omega b(x) |Du^-| u^- + \lambda \int_\Omega c (u^-)^2, \\
&\geq a\int_{\Omega} c u^- - \int_\Omega |h| u^- - \mu \beta(C_1) C_1\int_{\Omega} |Du^-|^2 - C_1\int_{\Omega} b(x) |Du^-| ,\\
&\geq a \int_{\Omega} c u^- - \|h\|_{L^p(\Omega)} C_1 - \mu \beta(C_1) C_1 - \| b \|_{L^2(\Omega)} C,
\end{aligned} \end{equation*}
which, since $u^-$ is uniformly bounded, yields a contradiction for large enough $a$, finishing the proof.
\end{proof}

\subsection{Proof of Theorem \ref{multpositivas}}
We start by proving the existence of a continuum.
 
 Let $\gamma$ be the solution provided by Theorem~\ref{teo_existenciasolfuerte} to the problem  
\[
\begin{aligned}
    -\Delta \gamma - b(x) |D\gamma| &= |\beta(\gamma)|\, |D\gamma|^2 + h(x) + 1 && \text{in } \Omega,\\
    \gamma &= 1 && \text{on } \partial\Omega.
\end{aligned}
\]
It is easy to verify that $\gamma$ is a supersolution of \eqref{P0prima}, and that $u_0 \ll \gamma$.  
Moreover, by Lemma~\ref{strict}, there exists a \emph{strict subsolution} $\alpha$ of \eqref{P0prima} such that $\alpha \leq \gamma$. Since $\alpha \leq 0$ on $\partial \Omega$, we conclude that $\alpha \ll \gamma$.  
Hence, by Theorem~\ref{thm:21coster}, there exists $R>0$ such that  
\[
\deg(I - \mathcal{M}_0, \mathcal{S}) = 1,
\quad \text{where} \quad
\mathcal{S} := \{\, u \in C_0^1(\overline{\Omega}) \mid \alpha \ll u \ll \gamma \,, \, \|u\|_{C^1(\Omega)} \leq R\,\}.
\]
Now, by Remark \ref{subsuperleq}, we know that $u_0$ is the unique solution of \eqref{P0prima}.  
Therefore, as $R \to 0$, we obtain  
$
i(I - \mathcal{M}_0, u_0) = 1.
$
Finally, by {\cite[Theorem~3.2]{rabinowitz_global_1971}}, we conclude the existence of a continuum of solutions satisfying the announced properties.

With these results, we can address the proof of the multiplicity result obtained for \eqref{Plambdaprima}.
For the rest of the proof, we proceed in several steps.

\textbf{Step 1:} \textit{Every nonnegative supersolution of \eqref{Plambdaprima} satisfies $u \gg u_0.$}

If $u$ is a nonnegative supersolution of \eqref{Plambdaprima}, then $u$ is a supersolution of \eqref{P0prima} (since $cu \gneqq 0$). By Remark \ref{subsuperleq} we deduce that $u \geq u_0$ and hence $u$ is not a solution of \eqref{P0prima}. Now, let $w:= u - u_0$. Note that $w\geq 0$ and it satisfies
\begin{equation*}
    \begin{aligned}
        -\Delta w &\geq \lambda c(x) u + b(x)|Dw| + \mu\beta(u)|Du|^2 - \mu \beta(u_0)|Du_0|^2 \quad &&\text{in } \Omega,\\
        w &\geq 0 &&\text{on } \partial \Omega. 
    \end{aligned}
\end{equation*}
Now, using that $\beta$ is increasing and that,
\begin{equation*}
\begin{aligned}
    \mu ( \beta(u)|Du|^2 - \beta(u_0) |Du_0|^2) &= \underbrace{\mu (\beta(u) - \beta(u_0))|Du|^2}_{\geq 0} + \underbrace{\mu \beta(u_0) ( |Du|^2 - |Du_0|^2)}_{= \mu \beta(u_0) (|Du|+|Du_0|)(|Du|-|Du_0|)} \\
    &\geq \mu \beta(u_0) (|Du_0| + |Du|) |Dw|
    \end{aligned}
\end{equation*}
and defining $\tilde b(x) = b(x) + \mu \beta(u_0) (|Du| + |Du_0|)$ we obtain that
\begin{equation*}
    \begin{aligned}
        -\Delta w - \tilde{b}(x) |Dw|&\geq \lambda c(x) u \geq 0\quad &&\text{in } \Omega,\\
        w &\geq 0 &&\text{on } \partial \Omega. 
    \end{aligned}
\end{equation*}
Since $w \geq 0$, by the strong maximum principle {\cite[Theorem 3.27]{caffarelli_interior_1989}}, we deduce that either $w \gg 0$ or $w \equiv 0$. If $w \equiv 0$, then $u = u_0$ is a solution of \eqref{P0prima}, which contradicts what we deduced before. Therefore $w \gg 0$, i.e., $u \gg u_0$.

\textbf{Step 2:} \textit{Problem \eqref{Plambdaprima} has no nonnegative solution for $\lambda$ large.}

Let $\varphi_1>0$ be the first eigenfunction of
\begin{equation*}
    -\Delta \varphi_1 = \gamma_1 c(x) \varphi_1, \quad \varphi_1 \in H_0^1(\Omega)
.\end{equation*}
If \eqref{Plambdaprima} has a nonnegative solution, multiplying \eqref{Plambdaprima} by $\varphi_1$ and integrating we obtain
\begin{equation*}
    \begin{aligned}
        \gamma_1 \int_\Omega c(x) u \varphi_1 &= - \int_\Omega \Delta u \,\varphi_1 \\
        &= \lambda \int_\Omega c(x) u \varphi_1 + \int_\Omega \mu \beta(u) |Du|^2 \varphi_1 + \int_\Omega b(x)|Du| \varphi_1 + \int_\Omega h(x) \varphi_1,
    \end{aligned}
\end{equation*}
thus, for $\lambda > \gamma_1$, as $u \geq u_0$, we have
\begin{equation*}
    \begin{aligned}
        0 &\geq (\lambda - \gamma_1) \int_\Omega c(x) u \varphi_1 + \int_{\Omega} b(x) |Du| \varphi_1 + \int_{\Omega} \mu \beta(u) |Du|^2 \varphi_1 + \int_{\Omega} h(x) \varphi_1, \\
        &\geq (\lambda - \gamma_1) \int c(x) u_0 \varphi_1+ \int_{\Omega} b(x) |Du| \varphi_1 + \int_{\Omega} \mu \beta(u_0) |Du|^2 \varphi_1 + \int_{\Omega} h(x) \varphi_1 
    \end{aligned}
\end{equation*}
which yields a contradiction for $\lambda $ large enough.

\textbf{Step 3:} \textit{Define $\overline{\lambda} = \sup \{ \lambda \, | \,$ \eqref{Plambdaprima} has a solution $u_\lambda\geq0 \}$, then $\overline{\lambda} \in[0, + \infty)$ and, for all $\lambda > \overline{\lambda},$ \eqref{Plambdaprima} has no nonnegative solution. Moreover, $\overline{\lambda} >0$.}

The first part is direct from the definition of $\overline{\lambda}$ and \textbf{Step 2}. Of course, $\overline{\lambda} \geq0$ since $u_0$ is a positive solution of \eqref{P0prima}. 

Next, to prove the positivity of $\overline{\lambda}$, we fix $\Lambda_0 > 0$ such that, for every $\lambda \in (0, \Lambda_0)$, problem \eqref{Plambdaprima} admits a nontrivial solution.  
Now, let $u_c$ denote the strong solution of  
\begin{equation*}
    \begin{aligned}
        -\Delta u_c - b(x) |Du_c| &= \lambda c(x) |u_c| + \mu |\beta(u_c)| |Du_c|^2 + h(x) &&\text{in } \Omega, \\
        u_c &= 0 &&\text{on } \partial \Omega,
    \end{aligned}
\end{equation*}
whose existence follows from Theorem~\ref{teo_existenciasolfuerte} under a suitable smallness condition on $\lambda $ and $\mu$, up to diminishing $\Lambda_0$ and $\delta^\star$ if necessary.  We emphasize that replacing $\lambda c(x)u$ with $\lambda c(x)|u_c|$ does not affect the proof of that theorem, 
which is why it remains applicable in this particular setting.

Since 
\[
\mu |\beta(u_c)| |Du_c|^2 \geq \mu \beta(u_c) |Du_c|^2, \quad \lambda c(x) |u_c|\geq0,  
\quad \text{and} \quad
\lambda c(x) |u_c| \geq \lambda c(x) u_c,
\]
it follows that $u_c$ is a supersolution of \eqref{Plambdaprima}, and  also a supersolution of \eqref{P0prima}.  

Furthermore, because $0 \leq\lambda c(x) u_0  $, the function $u_0$ is a subsolution of \eqref{Plambdaprima}.  
Since $u_0$ is also a subsolution of \eqref{P0prima}, Remark~\ref{subsuperleq} yields
$
u_0 \leq u_c $ in $ \Omega.
$
Then, by Theorem~\ref{thm:21coster}, there exists a solution $u_\lambda$ of \eqref{Plambdaprima} such that
$
u_0 \leq u_\lambda \leq u_c.
$
Hence, since $u_0 \geq 0$, we conclude that $u_\lambda$ is a nonnegative solution of \eqref{Plambdaprima}. Therefore, because such a solution exists for all $\lambda \in (0, \Lambda_0)$, it follows that $\overline{\lambda} > 0$.

\textbf{Step 4:} \textit{For all $0 < \lambda< \overline{\lambda}$, \eqref{Plambdaprima} has well ordered strict lower and upper solutions.}
Note that $u_0$ is a subsolution of \eqref{Plambdaprima} which is not a solution, since
\begin{equation*}
\begin{aligned}
-\Delta u_0 &= b(x)|Du_0| + \mu \beta(u_0)|Du_0|^2 + h(x) \\&\leq \lambda c(x)u_0 + b(x) |Du_0| + \mu \beta(u_0) |Du_0|^2 + h(x) .
\end{aligned}
\end{equation*}

Now, by definition of $\overline{\lambda}$, we can find 
$\tilde{\lambda} \in (\lambda, \overline{\lambda})$ and a nonnegative solution $u_{\tilde{\lambda}}$ of $(P_{\tilde{\lambda}})$. Then, by a similar argument as for $u_0$ being a subsolution, $u_{\tilde{\lambda}}$ is a supersolution of \eqref{Plambdaprima}. Moreover, it satisfies $u_{\tilde{\lambda}} \gg u_0$ by \textbf{Step 1}.

Let $u$ be a solution of \eqref{Plambdaprima}. Setting $w = u_{\tilde{\lambda}} - u$ and reasoning as in \textbf{Step 1}, we obtain that $w$ satisfies
\begin{equation*}
    \begin{aligned}
        -\Delta w - \tilde{b}(x) |Dw| &\geq \lambda c(x) w \geq 0 \quad &&\text{in } \Omega,\\
        w &\geq 0 &&\text{on } \partial \Omega,
    \end{aligned}
\end{equation*}
where $\tilde{b}(x) = b(x) + \mu \beta(u)( |Du_{\tilde{\lambda}}| + |Du|)$. This, the maximum principle (see {\cite[Theorem 3.27]{troianiello_elliptic_1987}}) implies that $w \gg 0$, hence $u_{\tilde{\lambda}} \gg u$, proving the strictness of $u_{\tilde{\lambda}}$. Analogously, we prove that $u_0$ is a strict subsolution of \eqref{Plambdaprima}.

\textbf{Step 5:} \textit{For all $\lambda \in (0, \overline{\lambda})$, \eqref{Plambdaprima} has at least two positive solutions with $u_0 \ll u_{\lambda,1} \ll u_{\lambda,2}$}.

By \textbf{Step 4}, Theorem \ref{thm:21coster} and Remark \ref{remark22}, we have some $R_0> 0$ such that $\deg(I - \mathcal{M}_\lambda, \mathcal{S}) = 1$ with
$$\mathcal{S} = \{ u \in C_0^1(\overline{\Omega}) \, | \, u_0 \ll u \ll u_{\tilde{\lambda}}, \, \|u\|_{C^1(\Omega)} < R_0 \} $$
and we have the existence of a first solution $u_{\lambda,1}$ of \eqref{Plambdaprima} such that $u_0 \leq u_{\lambda,1} \leq u_{\tilde{\lambda}}.$ As in the proof of {\cite[Theorem 1.4]{de_coster_multiplicity_2017}}, let us choose $u_{\lambda,1}$ as the minimal solution between $u_0$ and $u_{\tilde{\lambda}}$.

Now, using Lemma \ref{36}, there exists $A_1>0$ large enough such that \eqref{33} has no solution for $a\geq A_1$. By Theorem \ref{apriorilaplaciano} and Lemma \ref{Nagumo}, there exists $R_1 > R_0 >0$ such that, for any $a \in [0, A_1]$, every solution of \eqref{33} with $u\geq u_0$ satisfies $\|u\|_{C^1(\overline{\Omega})} < R_1$. Therefore, by the homotopy invariance of the topological degree, we have
\begin{center}
$\deg(I- \mathcal{M}_{\lambda}, \mathcal{D}) = \deg(I - \mathcal{M}_{\lambda} - \mathcal{L}^{-1}(A_1c), \mathcal{D}), $
where
$\mathcal{D} := \{ u \in C_0^1(\overline\Omega) \, \colon \, u_0 \ll u, \|u\|_{C^1(\Omega)} < R_1 \}.$
\end{center}
In the case of $a=A_1$, we know that \eqref{33} has no solution, hence $\deg(I -\mathcal{M_\lambda}-\mathcal{L}^{-1}(A_1 c), \mathcal{D}) = 0$. We then use this to apply the excision property of the degree, obtaining
$$\deg(I-\mathcal{M}_{\lambda}, \mathcal{D}\setminus \mathcal{S}) = \deg(I-\mathcal{M}_{\lambda}, \mathcal{D}) - \deg(I-\mathcal{M}_{\lambda},\mathcal{S}) = 0-1=-1$$
This proves the existence of a second solution $u_{\lambda,2}$ of \eqref{Plambdaprima} with $u_{\lambda,2} \gg u_0$ (since $u_{\lambda,2} \in \mathcal{D}$). As $u_{\lambda,1}$ is the minimal solution between $u_0$ and $u_{\tilde{\lambda}}$, we have $u_{\lambda,1} \lneqq u_{\lambda,2}$ as otherwise, by Theorem \ref{thm:21coster}, we would have a solution $u$ with $u_0 \leq u \leq \min \{ u_{\lambda,1}, u_{\lambda,2}, u _{\tilde{u}} \}$ that contradicts the minimality of $u_{\lambda,1}$. We continue by the same reasoning done in \textbf{Step 1} and \textbf{Step 4} to conclude that $u_{\lambda,1} \ll u_{\lambda,2}$.

\textbf{Step 6:} \textit{For $\lambda_1 < \lambda_2$, we have $u_{\lambda_1, 1} \ll u_{\lambda_2,1}$.}

Recall $u_{\lambda,1}$ is the minimal solution above $u_0$ and, just as in \textbf{Step 4}, $u_{\lambda_2,1}$ is a strict supersolution of $(P_{\lambda_1})$ with $u_{\lambda_2,1} \geq u_0$. We then deduce that $u_{\lambda_1, 1} \ll u_{\lambda_2,1}$.

\textbf{Step 7:} \textit{Problem $(P_{\overline{\lambda}})$ has at least one solution.}

Let $\{\lambda_n\}_{n \in \mathbb{N}} \subset (0, \overline{\lambda})$ be a sequence such that $\lambda_n \to \overline{\lambda}$ and $\{u_n\}_{n \in \mathbb{N}} \subset W^{2,p}(\Omega)$ be a sequence of corresponding solutions of $(P_{\lambda_n})$. By Theorem \ref{apriorilaplaciano}, there exists a constant $M>0$ such that, for all $n \in \mathbb{N}$, $\|u_n\|_{L^{\infty}(\Omega)}< M$ and, by Lemma \ref{Nagumo}, we have that there exists an $R>0$ such that, for all $n \in \mathbb{N}$, $\|u_n\|_{W^{2,p}(\Omega)} < R$. Hence, since $W^{2,p}_0(\Omega) \subset \subset C_0^1(\overline{\Omega})$ we have, up to a subsequence, that $u_n \to u$ in $C_0^1(\overline{\Omega})$. From this strong convergence and Lemma \ref{lemma:51}, we observe that
\begin{equation*}
\begin{aligned}
- \Delta u  &= \overline{\lambda}c(x)u + b(x) |Du| + \mu \beta(u)|Du|^2 + h(x)\quad &&\text{in } \Omega,\\
u&=0 &&\text{on } \partial\Omega,
\end{aligned}\end{equation*}
i.e., $u \in W^{2,p}(\Omega)$ is a solution of $(P_{\overline{\lambda}})$.

\textbf{Step 8:} Behavior of the solutions for $\lambda \to 0$.

Let $\{\lambda_n\}_{n \in \mathbb{N}} \subset (0, \overline{\lambda})$ be a decreasing sequence such that $\lambda_n \to 0$. By \textbf{Step 5} we know that the corresponding solutions $u_{\lambda_n,1}$ satisfy that $u_0 \ll u_{\lambda_n,1}$ and by \textbf{Step 6} we have that $\{u_{\lambda_n,1}\}_{n \in \mathbb{N}}$ is also a decreasing sequence. Arguing as in \textbf{Step 7}, we have that, up to a subsequence, $u_{\lambda_n,1} \to u$ in $C_0^1(\overline{\Omega})$ with $u$ solution of \eqref{P0prima}. By the uniqueness of the solution of \eqref{P0prima}, we deduce that $u=u_0$.

Now, for the behavior of $u_{\lambda,2}$, let us consider the sequence of solutions $\{u_{\lambda_n,2}\}_{n \in \mathbb{N}}$. If they were bounded, then as before, we would have, up to a subsequence, that $u_{\lambda_n,2} \to u$ in $C_0^1(\overline{\Omega})$ with $u$ being a solution of \eqref{P0prima}. By \textbf{Step 5} and the facts that $u_{\lambda_n,2} \not \in \mathcal{S}$, $u_{\lambda_n,2} \gg u_{\lambda_n,1}$ and $u_{\lambda_n,1} \to u_0$, we know that $\max \{u_{\lambda_n,2} - u_0 \} > \varepsilon$. This would imply that $u \not = u_0$ and $u$ solution of \eqref{P0prima}, which contradicts the uniqueness of the solution of \eqref{P0prima} as before. Hence, they cannot be bounded, i.e.,
$$\max u_{\lambda_n,2} \xrightarrow[]{n \to +\infty} + \infty,$$
concluding the proof. \qedsymbol

\bibliographystyle{amsplain}
\bibliography{references} 
\end{document}